\DeclareSymbolFont{bbold}{U}{bbold}{m}{n}
\DeclareSymbolFontAlphabet{\mathbbold}{bbold}
\newtheorem{maintheorem}{Theorem}
\newtheorem{maincorollary}[maintheorem]{Corollary}
\newtheorem{dummy}{dummy}[section]              
\newtheorem{lemma}[dummy]{Lemma}
\newtheorem{theorem}[dummy]{Theorem}
\newtheorem{corollary}[dummy]{Corollary}
\newtheorem{proposition}[dummy]{Proposition}
\newtheorem{question}[dummy]{Question}
\theoremstyle{definition}                                  
\newtheorem{definition}[dummy]{Definition}
\newtheorem{example}[dummy]{Example}
\newtheorem{remark}[dummy]{Remark}
\newtheorem{convention}[dummy]{Convention}
\newtheorem*{example*}{Example}
\newtheorem*{remark*}{Remark}
\newcommand{\Vect}{\mathbf{Vect}}
\DeclareMathOperator{\Aut }{Aut}
\DeclareMathOperator{\bbHom}{Hom}
\DeclareMathOperator{\Fun}{Fun}
\DeclareMathOperator{\End}{End}
\DeclareMathOperator{\colim}{colim}
\newcommand{\im}{\mathrm{im}}
\newcommand{\module}{\mathrm{mod}}
\newcommand{\Perf}{\mathbf{Perf}}
\newcommand{\Coh}{\mathbf{Coh}}
\DeclareMathOperator{\Sym}{Sym}
\newcommand{\QC}{QC}
\DeclareMathOperator{\Ext}{Ext}
\DeclareMathOperator{\Rep}{Rep}
\newcommand{\Ad}{Ad}
\newcommand{\Cat}{\mathbf{Cat}}
\newcommand{\DGCat}{\mathbf{StPr}_{\C}^L}
\newcommand{\dgmod}{\mathbf{dgMod}}
\newcommand{\dgcomod}{\mathbf{dgComod}}
\newcommand{\too}{\longrightarrow}
\newcommand{\id}{\mathrm{id}}
\newcommand{\ls}[2]{\leftidx{^{#1}}{#2}{}}
\newcommand{\lrsub}[3]{\leftidx{_{#1}}{{#2}}{_{#3}}}
\newcommand{\lrsubsuper}[5]{\leftidx{^{#2} _{#1}}{{#3}}{_{#4} ^{#5}}}
\newcommand{\dw}{\dot{w}}
\newcommand{\bcD}{\breve{\bD}}
\DeclareMathOperator{\RES}{{\bf Res}}
\DeclareMathOperator{\IND}{{\bf Ind}}
\DeclareMathOperator{\Ind}{Ind}
\newcommand{\ind}{\mathbf{ind}}
\newcommand{\res}{\mathbf{res}}
\newcommand{\St}[2]{\lrsub{#1}{\underline{\mathfrak{st}}}{#2}}
\newcommand{\Stw}[3]{\leftidx{_{#1}}{\underline{\mathfrak{st}}}{^{#3} _{#2}}}
\DeclareMathOperator{\ST}{{\bf St}}
\newcommand{\Stein}{\mathfrak{s}\mathfrak{t}}
\newcommand{\Spr}{Spr}
\newcommand{\reg}{\mathrm{reg}}
\newcommand{\Levi}{\mathcal Levi}
\newcommand{\comm}{\mathfrak{comm}}
\newcommand{\rs}{\mathrm{rs}}
\newcommand{\fg}{\mathfrak{g}}
\newcommand{\ft}{\mathfrak{t}}
\newcommand{\fp}{\mathfrak{p}}
\newcommand{\fu}{\mathfrak{u}}
\newcommand{\fb}{\mathfrak{b}}
\newcommand{\fE}{\mathfrak{E}}
\newcommand{\fF}{\mathfrak{F}}
\newcommand{\fM}{\mathfrak{M}}
\newcommand{\fN}{\mathfrak{N}}
\newcommand{\fK}{\mathfrak{K}}
\newcommand{\fD}{\mathfrak{D}}
\newcommand{\fz}{\mathfrak{z}}
\newcommand{\fl}{\mathfrak{l}}
\newcommand{\fq}{\mathfrak{q}}
\newcommand{\ffi}{\mathfrak{i}}
\newcommand{\fj}{\mathfrak{j}}
\newcommand{\fgl}{\mathfrak{gl}}
\newcommand{\fsl}{\mathfrak{sl}}
\newcommand{\cC}{\mathcal C}
\newcommand{\cD}{\mathcal D}
\newcommand{\cE}{\mathcal E}
\newcommand{\cF}{\mathcal F}
\newcommand{\cH}{\mathcal H}
\newcommand{\cK}{\mathcal K}
\newcommand{\cN}{\mathcal N}
\newcommand{\cO}{\mathcal O}
\newcommand{\cQ}{\mathcal Q}
\newcommand{\cS}{\mathcal S}
\newcommand{\bH}{\mathbf{H}}
\newcommand{\bD}{\mathbf{D}}
\newcommand{\bM}{\mathbf{M}}
\newcommand{\ufD}{\underline{\fD}}
\newcommand{\A}{\mathbb A}
\newcommand{\C}{\mathbb C}
\newcommand{\D}{\mathbb D}
\newcommand{\G}{\mathbb G}
\newcommand{\R}{\mathbb R}
\newcommand{\Z}{\mathbb Z}
\newcommand{\ug}{{\underline{\mathfrak{g}}}}
\newcommand{\ul}{{\underline{\mathfrak{l}}}}
\newcommand{\up}{\underline{\mathfrak{p}}}
\newcommand{\uq}{\underline{\mathfrak{q}}}
\newcommand{\um}{{\underline{\mathfrak{m}}}}
\newcommand{\ub}{\underline{\mathfrak{b}}}
\newcommand{\uz}{\underline{\mathfrak{z}}}
\newcommand{\ut}{\underline{\mathfrak{t}}}
\newcommand{\ucN}{\underline{\mathcal{N}}}
\newcommand{\ucO}{\underline{\mathcal{O}}}
\newcommand{\bA}{\mathbf{A}}
\newcommand*\leftdash{\rotatebox[origin=c]{-45}{$\dabar@\dabar@\dabar@$}}
\newcommand*\rightdash{\rotatebox[origin=c]{45}{$\dabar@\dabar@\dabar@$}}
\newcommand{\quot}[3]{{#1}\backslash{#2}/{#3}} 
\newcommand{\adjquot}{{/_{\hspace{-0.2em}ad}\hspace{0.1em}}}
\title[A Derived Decomposition]{A Derived Decomposition for equivariant $D$-modules}
\author{Sam Gunningham}
\begin{document}
	\begin{abstract}
		We show that the adjoint equivariant derived category of $D$-modules on a reductive Lie algebra $\fg$ carries an orthogonal decomposition in to blocks indexed by cuspidal data (in the sense of Lusztig). Each block admits a monadic description in terms a certain monad related to the homology of Steinberg varieties; this monad carries a filtration (the Mackey filtration) whose associated graded functor is given by the action of the relative Weyl group. Furthermore, we show that the Mackey filtration is generally non-split and thus the Springer-theoretic description of the entire equivariant derived category of $D$-modules appears to be substantially more subtle than either the case of the abelian category in earlier work of the author, or the derived category of nilpotent orbital sheaves in work of Rider and Russell. One notable feature of this setting is that the parabolic induction and restriction functors depend on the choice of parabolic subgroup containing a given Levi factor. 
	\end{abstract}

	\maketitle
	
	
	\section{Introduction}
\subsection{Main Results}

	Let $G$ be a connected, complex reductive group with Lie algebra $\fg$. We denote by $\ug = \fg/G$ the corresponding quotient stack, and $\bD(\ug)$ denotes the derived category of $D$-modules on $\ug$ (more precisely, its stable $\infty$-categorical enhancement). The goal of this paper is to understand the structure of this category.
	
	Recall that a \emph{cuspidal datum} $(L,\cE)$ for $G$, consists of a Levi subgroup $L$ of $G$ together with an irreducible cuspidal local system $\cE$ on a nilpotent orbit of $L$ in the sense of Lusztig \cite{lusztig_intersection_1984}. In \emph{loc. cit.}, Lusztig  has shown that cuspidal data index blocks of the abelian category $\bM(\ucN_G)$ of $G$-equivariant perverse sheaves on the nilpotent cone $\cN_G\subseteq \fg$. This was extended  by Rider and Russell to the equivariant derived category $\bD(\ucN_G)$ of constructible complexes on the nilpotent cone \cite{rider_perverse_2016}, and by the present author to the  abelian category $\bM(\ug)$ of equivariant $D$-modules on the entire Lie algebra \cite{Gunningham2018}. Our first result in this paper is a corresponding block decomposition for the \emph{derived} category $\bD(\ug)$.
	\begin{maintheorem}\label{maintheorem decomp}
		There is an orthogonal decomposition:
		\[
		\bD(\ug) \simeq \bigoplus _{(L,\cE)} \bD(\ug)_{(L,\cE)}
		\]
		indexed by conjugacy classes of cuspidal data $(L,\cE)$. 
	\end{maintheorem}
 	In the prior works mentioned above \cite{lusztig_intersection_1984, rider_perverse_2016,Gunningham2018},  the structure of the block corresponding to a given cuspidal datum $(L,\mathcal E)$ is determined by \emph{the relative Weyl} group $W_{G,L} := N_G(L)/L$, which is known to have the structure of a Coxeter group acting by reflections on the center $\fz(\fl)$ of the corresponding Levi subalgebra.
	
	Our next main result shows that the blocks of the derived category $\bD(\ug)$ are more subtle than that in the cases above. 
	\begin{maintheorem}\label{maintheorem monad}
		Let $(L,\cE)$ be a cuspidal datum for $G$, and choose a parabolic subgroup $P$ with Levi factor $L$. \begin{enumerate}
			\item \label{part monad} There is a monad $\ST = \ST_{(P,L,\cE)}$ acting on $\bD(\uz(\fl))$, and an equivalence
			\[
			\bD(\ug)_{(L,\cE)} \simeq \bD(\uz(\fl))^{\ST}.
			\]
			\item \label{part filtration} The functor $\ST$ carries a filtration (called the \emph{Mackey filtration}) indexed by $W_{G,L}$ (with Bruhat order induced by the choice of $P$), such that the associated graded functor is isomorphic to the group monad $(W_{G,L})_\ast$. 
		\end{enumerate}
	\end{maintheorem}
	
	As in \cite{Gunningham2018}, the proof of Theorem \ref{maintheorem decomp} and Theorem \ref{maintheorem monad} involves analyzing the functors of parabolic induction and restriction associated to a parabolic subgroup $P$ of $G$ with Levi factor $L$:
	\[
	\xymatrixcolsep{3pc}\xymatrix{
		\IND^G_{P,L}: \bD(\ul) \ar@/_0.3pc/[r] &  \ar@/_0.3pc/[l] \bD(\ug):  
		\RES_{P,L}^G
	}
	\]
	The composition of parabolic induction followed by restriction admits a filtration (the \emph{Mackey filtration}) in which the associated graded functors are given by further restrictions followed by inductions (see Proposition \ref{propositionmackey}). A crucial difference between our setting and that of \cite{lusztig_intersection_1984, rider_perverse_2016,Gunningham2018} is the fact that the Mackey filtration does not generally split. 
	
	To better understand this point, let us specialize to the case of the \emph{Springer block}, that is, the subcategory $\bD(\ug)_{\Spr} = \bD(\ug)_{(T,\C)}$ corresponding to the cuspidal datum $(T,\C)$ where $T$ is a maximal torus, and $\C$ the unique cuspidal local system on the nilpotent cone $\cN_T=pt$. In this case, after choosing a Borel subgroup $B$ containing $T$, Theorem \ref{maintheorem monad} states that 
	\[
	\bD(\ug)_{\Spr} \cong \bD(\ut)^{\ST}
	\] 
	Moreover, the Steinberg monad $\ST$ is given simply by pull-push along a correspondence
	\[
	\ut \leftarrow \St{}{} \rightarrow \ut,
	\]
	where $\St{}{}$ is the Steinberg stack (see Section \ref{sec:steinberg-stacks-and-functors}). The Mackey filtration on $\ST$ is induced by the natural stratification of $\Stein$ indexed by the Weyl group $W$ (or equivalently, the orbits $\quot BGB$). 

	\begin{maintheorem}\label{maintheorem nonsplit}
	Suppose $G$ is not abelian. Then the Mackey filtration on $\ST$ does not split. 
	\end{maintheorem}
	
	Theorem \ref{maintheorem nonsplit} has the following immediate consequence.
	\begin{maincorollary}
		\label{maincor nonisom} There does not exist an equivalence of categories $\varphi:\bD(\ug)_{\Spr}\simeq \bD(\ut)^W$ such that the following diagram commutes: 
		\[
		\xymatrix{
		\bD(\ug) \ar[r]^-\varphi \ar[rd]_-{\RES^G_{B,T}} & \bD(\ut)^W \ar[d]^-{\mathrm{Forget}}\\
		&\bD(\ut)
		}
		\]
	\end{maincorollary}

%
	
	The proof of Theorem \ref{maintheorem nonsplit} boils down to an explicit computation of the boundary map in a certain long exact sequence in Borel--Moore homology associated to the Hopf fibration (see Section \ref{sec:sl2 computation}). 
	
	
	Theorem \ref{maintheorem nonsplit} has the following interesting consequence concerning parabolic induction and restriction. For context, we recall that in many situations, the functors of parabolic induction and restriction are independent of the choice of parabolic subgroup containing a given Levi factor (see e.g. \cite{Gunningham2018}, Theorem B in the abelian category setting, and the work of Penghui Li in the Betti derived setting \cite{Li2018, Li2023}). On the other hand, in our setting, we have the following negative result.
	
	\begin{maintheorem}\label{mainthm opposite}
		Let $B$ and $\overline{B}$ be opposite Borel subgroups with respect to a maximal torus $T\subseteq G$, and suppose $G$ is non-abelian. Then there is no natural isomorphism of functors between $\IND^{G}_{B,T}$ and $\IND^G_{\overline{B},T}$ (or equivalently, between $\RES^G_{B,T}$ and $\RES^G_{\overline{B},T}$).
	\end{maintheorem}
	To deduce Theorem \ref{mainthm opposite} from Theorem \ref{maintheorem nonsplit} we use the second adjunction property of Braden, which states that $\RES^G_{\overline{B},T}$ is left adjoint to $\IND^G_{B,T}$ as well as being right adjoint to $\IND^G_{\overline{B},T}$ (see Theorem \ref{theorem second adjunction}). By Theorem \ref{maintheorem nonsplit}, if $\RES^G_{B,T}$ is naturally isomorphic to $\RES^G_{\overline{B},T}$, the counit of the resulting second adjunction cannot split the unit of the first adjunction. We derive a contradiction by analyzing behavior of these functors over the regular locus. 
	
	\begin{remark}
		The statement of Theorem \ref{mainthm opposite} is somewhat delicate, as the following observations indicate.
		\begin{enumerate}
			\item First let us note that the fact that $B$ and $\overline{B}$ are conjugate subgroups of $G$ does not contradict Theorem \ref{mainthm opposite}, as the conjugation will act non-trivially on the torus $T$. In the statement of Theorem \ref{mainthm opposite} it is crucial that we considered $T$ as a fixed subgroup of $G$ rather than as the universal torus $B/[B,B] \cong \overline{B}/[\overline{B},\overline{B}]$. Indeed, if we fix an element $g\in G$ such that $gBg^{-1} = \overline{B}$, then the induced isomorphism 
			\[
		\xymatrix{
		T \ar[r]_-\sim \ar@/^1pc/[rrr]& B/[B,B] \ar[r]^-{\Ad_g}_-\sim & \overline{B}/[\overline{B},\overline{B}] & \ar[l]^-\sim T
		}
			\]
			is realized by the action of the longest element $w_0 \in W$.
			
			\item It is also crucial that we take the \emph{equivariant} derived category $\bD(\ut) \simeq \bD(\ft)\otimes \bD(BT)$ as the source of the induction functor in Theorem \ref{mainthm opposite}. Indeed, it follows from work of Tsao-Hsien Chen \cite[Proposition 3.2]{Chen2021} that the induction functor $\IND':\bD(\ft) \to \bD(\ug)$ obtained by first forgetting the equivariance is, in fact, independent of the choice of Borel subgroup. The author would like to thank Roman Bezrukavnikov and Alexander Yom-Dim for some illuminating discussions surrounding these ideas.  
		\end{enumerate}	
	\end{remark}

	\subsection{Organization of the paper}
	\begin{itemize}
		\item In the remainder of the introduction below, we discuss some related results and related ideas.
		
		\item In Section \ref{sec:preliminaries} we give an overview of some of the tools we will need from the theory of stable $\infty$-categories and $D$-modules.
		
		\item In Section \ref{sec:mackey-theory-and-decomposition-by-levis} we review some of the ideas from \cite{Gunningham2018}, establish a derived recollement situation indexed by conjugacy classes of Levi subgroups, and show that it splits, leading to an orthogonal decomposition.
		
		\item In Section \ref{sec:generalized-springer-decomposition} we study the cuspidal blocks of the derived category, and complete the proof of Theorem \ref{maintheorem decomp}.
		
		\item In Section \ref{sec:the-steinberg-monad}, we study the blocks of the derived category via the Steinberg monad, and complete the proof of Theorem \ref{maintheorem monad}.
		
		\item In Section \ref{sec:nonsplit} we examine the splitting of Mackey filtration and complete the proof of Theorem \ref{maintheorem nonsplit}, Corollary \ref{maincor nonisom}, and Theorem \ref{mainthm opposite}. 
	\end{itemize}

	\subsection{Related work}\label{sec:background-and-motivation}
	A more extensive overview of the background, related works, and motivation for the study of $D$-modules on reductive groups and Lie algebras can be found in the author's other paper \cite{Gunningham2018}. In this subsection, we make a more concise comparison our results with a few closely related results in the literature.

	Lusztig's paper \cite{lusztig_intersection_1984} introduced the Generalized Springer Correspondence, which gave a block decomposition of the equivariant perverse sheaves on the nilpotent cone $\cN_G$, where the blocks are indexed by cuspidal data $(L,\cE)$ and each block is equivalent to the category of representations of the relative Weyl group $W_{G,L}$. The results of the present paper extend Lusztig's Generalized Springer Correspondence in two directions: 
	\begin{enumerate}
		\item replacing perverse sheaves on the nilpotent cone with (not-necessarily holonomic) $D$-modules on the whole of $\fg$;
		\item  replacing the abelian category with the derived category.
	\end{enumerate}
	The first direction is the subject of author's earlier paper \cite{Gunningham2018}, where it is shown that the abelian category $\bM(\ug)$ admits an orthogonal decomposition indexed by cuspidal data, and the blocks take the the following form:
	\[
	\bM(\fg)^G_{(L,\cE)} \simeq\bM(\fz(\fl))^{W_{G,L}}. 
	\] 
	
	The second direction has been studied by Achar \cite{achar_green_2011}, Rider \cite{rider_formality_2013}, and Rider--Russell \cite{rider_perverse_2016}. \footnote{In more recent work of Laumon and Letellier \cite{Laumon2023}, the authors consider the derived category of sheaves $\bD(\ug_{(L)})$ on a fixed stratum $\ug_{(L)}$ corresponding to a Levi subgroup $L$ of $G$ (see also \cite{Gunningham2018}, Definition 2.1 for the notation).} Note that, via the Riemann-Hilbert correspondence, we may identify the category $\bD(\ucN_G)$ of $G$-equivariant complexes of $D$-modules on $\fg$ with support in $\cN_G\subseteq \fg$ with the equivariant derived category of constructible complexes on $\cN_G$ (this identification uses the fact that $G$ acts on $\cN_G$ with finitely many orbits, and thus every $D$-module with support in $\cN_G$ is regular holonomic). In particular, it is shown in \cite{rider_perverse_2016} that the derived category $\bD_{coh}(\ucN_G)$ of equivariant constructible complexes on the nilpotent cone admits an orthogonal decomposition indexed by cuspidal data, and the blocks take the following form:
	\begin{equation}\label{eq:nilp gen spr}
	\bD(\ucN_G)_{(L,\cE)} 
	\simeq \bD_{coh}(pt/Z^\circ(L))^{W_{G,L}}.
	\end{equation}
	
	Theorem \ref{maintheorem decomp} extends these orthogonal decompositions to the entire category $\bD(\ug)$. One might naively guess that the blocks $\bD(\ug)_{(L,\cE)}$ are identified with $\bD(\uz(\fl))^{W_{G,L}}$. However Theorem \ref{maintheorem monad} states that this is not the case: rather, the naive guess is true up to taking the associated graded for a certain filtration.

	\begin{remark}
		In light of the present paper, the results of \cite{Gunningham2018} and \cite{rider_perverse_2016} can be understood as follows: the Mackey filtration \emph{does} split if we restrict to either the heart of the t-structure or the subcategory of complexes supported on the nilpotent cone. 
	\end{remark}

	\subsection{Rephrasing the results in terms of differential graded algebras}
		Let us rephrase Theorem \ref{maintheorem monad} in terms of differential graded-algebras (dg-algebras) as follows. We first set up some notation.

	Given a torus $Z$ with Lie algebra $\fz$, we write $\fD_{\fz}$ for the ring of differential operators and $S_{\fz} = \Sym(\fz^\ast [-2])$ for the symmetric algebra, understood as a (formal) differential graded algebra (dg-algebra) generated in degree 2. Let $\ufD_{\fz}$ denote the (formal) dg-algebra $\fD_{\fz} \boxtimes S_{\fz}$. Recall that the category $\bD_{coh}(\uz)$ of coherent complexes of $D$-modules on $\uz = \fz \times BZ$ is equivalent to the category $\ufD_{\fz}-\Perf$ of perfect dg $\ufD_{\fz}$-modules. Given a finite group $W$ acting on the stack $BZ$, we write $\ufD_{\fz} \mathbin{\#} W$ for the dg-vector space $\ufD_{\fz} \otimes \C[W]$, equipped with the smash product (that is, semidirect) dg-algebra structure.
	
	\begin{maintheorem}\label{maintheorem dg algebra}
		Let $(L,\cE)$ be a cuspidal datum for $G$ and choose a parabolic subgroup $P$ containing $L$ as a Levi factor. 
		\begin{enumerate}
			\item \label{part-bb} There is a dg-$\ufD_{\fz}$-ring $\bA = \bA_{(P,L,\cE)}$, together with an equivalence
			$
			\bD_{coh}(\ug)_{(L,\cE)} \simeq \bA-\Perf
			$.
			\item \label{part-mackey} There is a filtration (called the \emph{Mackey filtration}) of $\bA$ as $\ufD_{\fz(\fl)}$ dg-rings, indexed by $W_{G,L}$, such that the associated graded dg-algebra is 
			$
			\ufD_{\fz(\fl)}\mathbin{\#} W_{G,L}
			$
			$\bA \simeq \ufD_{\fz(\fg)}$.
		\end{enumerate}
	\end{maintheorem}

\subsection{Example: $SL_2$}\label{sec:example-sl2}
Most of the main features of our results can be seen in the case $G=SL_2$. In that case, there are two cuspidal data: the \emph{Springer datum} $Spr = (T,\C)$, where $T$ is a maximal torus of $G$, and $cusp = (G,\cE)$, where $\cE$ is the unique non-trivial local system on the regular nilpotent orbit of $G$. In this case Theorem \ref{maintheorem decomp} gives a decomposition of the form:
\[
\bD(\ug) \simeq \bD(\ug)_{Spr} \oplus \bD(\ug)_{cusp}
\]
The cuspidal block is generated by a single simple object $\fE = IC(\cO_{\reg};\cE) \in \bD(\ug)$, whose derived endomorphism algebra is concentrated in degree 0. Thus $\bD(\ug)_{cusp} \simeq \Vect$. This is in accordance with Theorem \ref{maintheorem monad}; indeed, here, $L=G$, $\uz(\fl)=pt$ and the Steinberg monad is trivial. 

On the other hand, by part \ref{part monad} of Theorem \ref{maintheorem monad}, the Springer block is identified with modules for the Steinberg monad $\ST$ acting on $\bD(\ut)$. After identifying endofunctors of $\bD(\ut)$ with integral kernels in $\bD(\ut\times \ut)$, the Steinberg monad is identified with the $D$-module of relative Borel-Moore chains, $f_\ast(\omega_{\Stein})$, where $f:\Stein \to \ut \times \ut$ is the \emph{Steinberg stack} (see Section \ref{sec:steinberg-stacks-and-functors}). The Steinberg stack carries an open-closed decomposition:
\[
\xymatrixcolsep{5pc}\xymatrix{
	 \Stein_e \ar[d]_-\wr \ar@{^{(}->}[r]^{\text{closed}} & \Stein \ar[d]_-f & \ar@{_{(}->}[l]_{\text{open}} \Stein_s \ar[d]^-\wr \\
	\ut \ar[r]_-{\Delta} & \ut \times \ut & \ar[l]^-{\nabla} \ut
	}
\]
The components are indexed by the Weyl group $W=\{e,s\}$, and $\Delta$ (respectively, $\nabla$) denotes the graph of the identity (respectively, the action of $s$) $\ut \to \ut \times \ut$. In this case, the Mackey filtration consists of a single exact triangle
\[
\Delta_\ast (\omega_{\ut}) \simeq f_\ast(\omega_{\Stein_e}) \to f_\ast(\omega_{\Stein}) \to f_\ast(\omega_{\Stein_s}) \simeq \nabla_\ast(\omega_{\ut})
\]
where $\Delta$ (respectively, $\nabla$) is the graph of the the identity (respectively $s$) $\ut \to \ut \times \ut$. In Section \ref{sec:sl2 computation} we show that this exact triangle is not split. This is the key calculation behind Theorem \ref{maintheorem nonsplit}. 

%

Equivalently, the dg-vector space $\bA$ appearing in Theorem \ref{maintheorem dg algebra} is given by taking global sections over $\ut \times \ut$ (in a certain sense; see Section \ref{sec: steinberg as integral}). Thus, there is an exact triangle
\begin{equation}\label{equation-triangle}
	\xymatrix{
		\bA_e \ar[r] & \bA \ar[r] & \bA_s \ar[r]^{+1} &  
	}
\end{equation}
where $\bA_e$ (respectively $\bA_s$) is the diagonal bimodule $\ufD_{\ft}$ (respectively the bimodule $\ufD_{\ft}$ where the left action is as usual and the right action is twisted by $s\in W$).

\subsection{A triple affine Hecke algebra?}\label{sec:a-triple-affine-hecke-algebra}
Theorem \ref{maintheorem dg algebra} gives a sense in which entire block $\bD_{coh}(\fg)^G_{(L,\cE)}$ has the flavor of a ``triple affine Hecke algebra'': two of the affine directions are in degree zero as represented by the copy of $\fD_{\fz(\fl)} = \Sym(\fz(\fl)) \rtimes \Sym(\fz(\fl)^\ast)$ sitting in $H^0(\bA_{(P,L\cE)})$, and the third is in even cohomological degrees, represented by copy of $S_{\fz(\fl)}$. 

The non-splitting of the Mackey filtration means that this algebra is not just a semidirect product of $W_{G,L}$ with $\ufD_{\fz(\fl)}$. This raises the following question, which the author hopes to return to in future work.
\begin{question}\label{questionhecke}
	Is there a combinatorial description of the dg-algebra $\bA_{(P,L,\cE)}$ in terms of the Coxeter system $(W_{G,L},\fz(\fl))$?
\end{question}
\begin{remark}
	It is not clear to the author if the dg-algebra is independent of the choice of parabolic $P$, or if it is formal (both of which seem to be necessary prerequisites to having any kind of reasonable combinatorial description).
\end{remark}

It is natural to look for deformations of the category $\bD_{coh}(\fg)^G$ which realize Hecke-type deformations of the algebras controlling the blocks. There are two flavors of such deformations. 

The first corresponds to deforming $H^0(\bA_{(P,L,\cE)}) = \fD_{\fz(\fl)} \rtimes W_{G,L}$ to a rational Cherednik algebra (``turning on the $c$ parameter''); the corresponding deformation of the abelian category $\bM(\fg)^G$ has only been understood geometrically in the case $G=GL_n$, in which case one studies the category of mirabolic $D$-modules $\bM(\fgl_n \times \C^n)^{GL_n,c}$ (moreover, only a generic block of that category, seen by Hamiltonian reduction, has been related to a Cherednik algebra). Even the abelian category story (as opposed to the derived categories studied in this paper) is a rich and active topic of research in type $A$ (see e.g. \cite{bellamy_hamiltonian_2015}) and there appears to be no analogue of the mirabolic deformation outside of type $A$. 

The second flavor of Hecke deformation corresponds to deforming $\bA_{(P,L,\cE),0} = S_{\fz(\fl)^\ast} \rtimes W_{G,L}$ to a graded Hecke algebra.We discuss this in Section \ref{sec:constructible-complexes-on-the-nilpotent-cone-and-graded-affine-hecke-algebras} below.

\subsection{Constructible complexes on the nilpotent cone, and graded affine Hecke algebras}\label{sec:constructible-complexes-on-the-nilpotent-cone-and-graded-affine-hecke-algebras}
	In the work of Rider \cite{rider_formality_2013}, formality of Springer block of the constructible derived category $\bD_{con}(\cN_G)^G$ had to be established first, before one could give a description of the category in terms of dg-modules. This was achieved by defining a mixed version of the category, which involves some intricate and technical constructions in the theory of triangulated categories. 
	
	The Barr--Beck--Lurie theorem allows us to construct equivalences as in Theorem \ref{maintheorem monad} without having any a priori formality results. 
	
	For example, it follows from the techniques in this paper that the Springer block of $\bD_{con}(\cN_G)^G$ is given by the dg-algebra of equivariant Borel-Moore chains on the Steinberg variety. Once this fact is established, it is not hard to prove that this dg-algebra is formal (e.g. by using Hodge theory), recovering Rider's identification of the Springer block. Note that these techniques do not address the construction of mixed enhancements of the category, which is of of significant independent interest.
	
	More generally, let us consider a block of the category $\bD_{con}(\cN_G)^G$ corresponding to a cuspidal datum $(L,\cE)$. It follows from the Barr-Beck-Lurie Theorem that this block is given by dg-modules for the dg-algebra
	$
	R\End(\IND^G_{P,L}\cE)
	$
	Once again this algebra is formal, and thus is given by the corresponding $\Ext$ algebra. As computed in \cite{rider_perverse_2016}, this algebra is given by the semidirect product
	$
	S_{\fz(\fl)} \rtimes W_{G,L}
	$.

	It is interesting to note that these semidirect product algebras which control the blocks of $\bD_{con}(\cN_G)^G$ deform to graded affine Hecke algebras once one considers $\G_m$-equivariant objects for the scaling $\G_m$-action on $\fg$. This is implicit in the work of Lusztig \cite{lusztig_cuspidal_1988, lusztig_cuspidal_1995}, where such algebras were first defined and identified with the $\Ext$ algebras of the parabolic induction of cuspidal local systems on Levi subgroups, or equivalently the twisted equivariant homology of certain Steinberg varieties. Putting Lusztig's results in the language of this paper, we obtain:
	\begin{theorem}
		 There is an orthogonal decomposition
		 \[
		 \bD_{coh}(\cN_G)^{G\times \G_m} = \bigoplus _{(L,\cE)} \bD_{coh}(\cN_G)^{G\times\G_m}_{(L,\cE)}
		 \]
		where each block is given by the category of dg-modules for the graded Hecke algebra $\bH_{(L,\cE)}$ with parameters as specified in \cite{lusztig_cuspidal_1988}.
	\end{theorem}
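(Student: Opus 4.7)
The plan is to run the proof strategies of Theorems~\ref{theoremod} and \ref{maintheorem-monadic} in the $\G_m$-equivariant setting, and then invoke the explicit computations of Lusztig~\cite{lusztig_cuspidal_1988, lusztig_cuspidal_1995} to identify the resulting monadic dg-algebra with the graded affine Hecke algebra $\bH_{(L,\cE)}$.

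First, I would establish the orthogonal decomposition. The scaling $\G_m$-action on $\fg$ commutes with the adjoint $G$-action and preserves both the nilpotent cone and every parabolic subgroup, so the adjoint functors of parabolic induction and restriction lift canonically to functors on $\bD_{coh}(\cN_G)^{G\times\G_m}$, together with the Mackey filtration of \cite{gunningham_generalized_2015}. The arguments outlined in Section~\ref{sec:an-outline-of-the-proof-of-theorem-reftheoremod}---the recollement for cuspidal data, orthogonality of parabolic inductions from non-conjugate cuspidal data (via cleanness and the Mackey formula), and the fact that cohomology objects are direct summands of parabolic inductions---all carry over verbatim in the $(G\times\G_m)$-equivariant setting, yielding the claimed decomposition indexed by cuspidal pairs $(L,\cE)$.

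Next, for the identification of each block, I would apply the Barr--Beck--Lurie theorem to parabolic restriction onto the cuspidal block of $\bD_{coh}(\cN_L)^{L\times\G_m}$, exactly as in Theorem~\ref{maintheorem-monadic}. This presents the $(L,\cE)$-block as perfect dg-modules over the derived endomorphism algebra $R\End_{G\times\G_m}(\IND^G_{P,L}\IC(\cE))$, which computes the $(G\times\G_m)$-equivariant Borel--Moore homology of a Steinberg variety twisted by $\cE$. Two facts then complete the proof: (a) the complex $\IND^G_{P,L}\IC(\cE)$ is a semisimple pure complex, being the proper pushforward of the $\IC$-extension of a clean local system, so the graded $\Ext$-algebra is pure and the standard purity-implies-formality argument (discussed in Section~\ref{sec:constructible-complexes-on-the-nilpotent-cone-and-graded-affine-hecke-algebras}) makes the monad formal; (b) Lusztig~\cite{lusztig_cuspidal_1988} computes this $\Ext$-algebra explicitly via a convolution presentation on twisted equivariant Borel--Moore homology and identifies it with the graded affine Hecke algebra $\bH_{(L,\cE)}$ with the specified parameters.

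The main obstacle is largely bookkeeping: one must verify that the $\G_m$-grading induced by the scaling action together with our equivariance conventions agrees with Lusztig's grading convention on $\bH_{(L,\cE)}$, and that the convolution product on the twisted equivariant Borel--Moore homology produced by the Barr--Beck--Lurie presentation matches the convolution product appearing in Lusztig's construction of the graded Hecke algebra (including the precise identification of the parameters $c_\alpha$ attached to simple reflections in $W_{(G,L)}$). No geometric input beyond what is already available in \cite{gunningham_generalized_2015} and \cite{lusztig_cuspidal_1988} is needed.
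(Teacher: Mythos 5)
Your proposal matches the paper's approach. The paper does not give a detailed proof here — it presents the theorem as ``Putting Lusztig's results in the language of this paper,'' after having sketched exactly the argument you describe: lift parabolic induction/restriction and the Mackey filtration to the $(G\times\G_m)$-equivariant setting, apply Barr--Beck--Lurie to present each block as dg-modules over $R\End(\IND^G_{P,L}\cE)$, note formality via purity/Hodge theory, and invoke Lusztig's identification of that $\Ext$-algebra with the graded affine Hecke algebra.
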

	\begin{remark}
		The motivation for considering these graded Hecke algebras comes from a relation with the representation theory of $p$-adic groups. The intersection cohomology complexes of local systems on nilpotent orbits for $G$ give examples of modules for the graded Hecke algebra.
	\end{remark}

	\subsection{Split recollement arising from the partition of the commuting variety}\label{sec:split-recollement-arising-from-the-partition-of-the-commuting-variety}
	As explained in \cite{Gunningham2018}, the decomposition of Theorem \ref{maintheorem decomp} (or rather the coarser decomposition indexed by the conjugacy class of the Levi $L$ as in Theorem \ref{thm: recollement is split}) has a geometric interpretation as follows. The variety  $\comm(\fg)$ of commuting elements of $\fg$ has a locally closed partition indexed by conjugacy classes of Levi subgroups. The singular support of an object in $\bD(\fg)^G$ is a closed subvariety of $\comm(\fg)$, and thus the category $\bD(\fg)^G$ carries a filtration indexed by conjugacy classes of Levis, according to the singular support. The somewhat surprising conclusion is that this filtration is, in fact, an orthogonal decomposition. 

\begin{remark}
Work of McGerty and Nevins \cite{mcgerty_morse_2014} identifies certain stratifications of the cotangent bundle of a stack which give rise to a recollement; although the results of this paper do not immediately apply in their setting, we expect that they are closely related (note that an orthogonal decomposition is a very special case of recollement).
\end{remark}

\subsection{Notation and conventions}\label{sec:notation}
The following overview of notational conventions may be helpful when reading this paper.
\begin{itemize}
	\item The following is a summary of the notation for $D$-modules (further details can be found in Section \ref{sec:ind-coherent-d-modules}). The abelian category of (all) $D$-modules on a smooth variety or stack $X$ is denoted $\bM(X)$; the (unbounded) dg derived category is denoted $\bD(X)$. The subcategory of bounded complexes with coherent cohomology modules is denoted $\bD_{coh}(X)$, and $\bcD(X) = \Ind \bD_{coh}(X)$ denotes the Ind-coherent category.
	\item Throughout the paper, $G$ always refers to a complex reductive group $P$ and $Q$ are parabolic subgroups with unipotent radicals $U$ and $V$ respectively, and $L = P/U$, $M=Q/V$ are the Levi factors. Lie algebras are denoted by fraktur letter $\fg, \fp , \fq$, etc. as usual. ***Come back to this... maybe want Levi subgroup to refer to a subgroup.
	\item The notation $Z^\circ(G)$ refers to the neutral component of the center of $G$, and $\fz(\fg)$ the center of the Lie algebra $\fg$ (which is identified with the Lie algebra of $Z^\circ(G)$).
	\item We use the underline notation $\ug$ to refer to the quotient stack $\fg/G$. Thus $\bD(\ug)$ means the same thing as $\bD(\fg/G)$ or equivalently $\bD(\fg)^G$. This notation may be applied in similar situations, e.g. $\up = \fp/P$, $\ucN_G = \cN_G/G$, $\uz(\fl)/Z^\circ(L)$ etc. Further notation for related stacks (e.g. the Steinberg stack $\St{Q}{P}$) is introduced in Section \ref{sec:steinberg-stacks-and-functors}.
	\item Functors of parabolic induction and restriction (introduced in Section \ref{sec:parabolic-induction-and-restriction}) will be denoted by $\IND^G_{P,L}$ and $\RES^G_{P,L}$. The Steinberg functors are the composites of parabolic induction with restriction, and are denoted by $\ST$.
	\item We frequently work with the poset $\Levi_G$ of Levi subgroups of $G$ up to conjugacy, ordered by inclusion. Thus $(M)\leq (L)$ means that some conjugate of $M$ is contained in $L$.   
	\item We use a superscript on the left to denote the adjoint action or conjugation. Thus $\ls gx$ means $\Ad(g)(x)$ (for some $g\in G$, $x\in \fg$), and if $P$ is a subgroup of $G$, then $\ls gP$ means $gPg^{-1}$.
\end{itemize}	

\begin{remark}
	References to numbered statements in \cite{Gunningham2018} correspond to those in arXiv:1510.02452v4, which has been rewritten to incorporate the published correction in 2021. (The main theorems A, B, and C are unchanged by the correction and are the same in the original published version and arXiv version 4). 
\end{remark}

\subsection{Acknowledgments}
Much of this paper was written while I was a postdoc at MSRI for the program on Geometric Representation Theory in fall 2014 and I would like to thank them for their hospitality. This work (which arose from my PhD thesis) has benefited greatly from numerous conversations with various people over the last few years. The following is a brief and incomplete list of such individuals whom I would especially like to thank, with apologies to those who are omitted. P. Achar, G. Bellamy, D. Ben-Zvi, D. Fratila, D. Gaitsgory,  D. Jordan, D. Juteau,  P. Li, C. Mautner, D. Nadler, L. Rider, T. Schedler. I would also like to thank two anonymous referees for their helpful comments on an earlier version. I would also like to thank T. Gannon, R. Bezrukavnikov, and A. Yom--Din for helpful discussions which prompted the later revisions and addition of Section 6 of this paper. The author was partially supported by NSF grant DMS-2202363.

\section{Preliminaries}\label{sec:preliminaries}
	In this section we set up the category theoretic framework, and describe the category of Ind-coherent $D$-modules on a stack.
	
	\subsection{Stable $\infty$-categories}\label{subsectionstable}
	In this paper we will make use of the theory of $\C$-linear, stable, presentable, $\infty$-categories, as developed by Lurie \cite{lurie_stable_2006} \cite{lurie_higher_2011}, or alternatively, pretriangulated differential graded categories (see \cite{cohn_differential_2013} for the relationship between the two theories). We refer the reader to \cite{ben-zvi_integral_2010} or \cite{gaitsgory_generalities_2012} for an overview of the main results and techniques, and directions towards further references. Below we outline some key properties.
	
	To each $\C$-linear, stable, presentable $\infty$-category $\cC$, the homotopy category $h\cC$ is a triangulated category; one thinks of $\cC$ as an enhancement of the triangulated category $h\cC$. For much of this paper, the reader may replace stable $\infty$-categories with their homotopy categories without any loss of understanding. However, the extra structure of these enhancements allow for much cleaner and more natural proofs of many of the results in this paper.
	
	\begin{definition}
		Let $\cC, \cD$ be stable, presentable, $\C$-linear $\infty$-categories. Given objects $c, d \in \cC$, we have a complex $R\bbHom(c,d)$ of morphisms from $c$ to $d$.
		\begin{enumerate}
			\item A functor $F: \cC \to \cD$ is called \emph{continuous} if it preserves all small colimits. 
			\item An object $c\in \cC$ is called \emph{compact} if the functor $R\bbHom(c,-)$ is continuous.
			\item We say $F$ is \emph{quasi-proper} if it sends compact objects to compact objects.
		\end{enumerate}
	\end{definition}

		We note that a functor $F:\cC\to \cD$ between stable $\infty$-categories is continuous if and only if it is exact (that is, it preserves all finite colimits, equivalently finite limits) and coproducts. In particular, an object $c$ in $\cC$ is compact if and only if $R\bbHom_\cC(c,-)$ preserves coproducts.

	The collection of stable presentable $\infty$-categories forms an $(\infty,1)$-category $\DGCat$, in which the morphisms are continuous functors. We also have $\infty$-categories $\Fun(\cC, \cD)$ and $\Fun^{L}(\cC,\cD)$, of functors and continuous functors respectively; both of these categories are themselves stable, presentable, and $\C$-linear.
	
	A category $\cC$ is called compactly generated if there is a subset $\cS$ of compact objects in $\cC$ such that the right orthogonal to $\cS$ vanishes. Given $\cC \in \DGCat$, we write $ \cC_c$ for the subcategory of compact objects. We can recover $\cC$ from $\cC_c$ as the Ind-category: $\cC \simeq \Ind (\cC_c)$. All the categories arising in this paper will be compactly generated.
	
	\begin{example}
		Given a differential graded (dg) algebra $A$, we have the stable $\infty$-category of perfect complexes $A-\Perf$, and $A-\dgmod = \Ind(A-\Perf))$ is the category of unbounded complexes of $A$-modules. In the special case $A= \C$, we write $\Vect := \C-\dgmod$.
	\end{example}
	
	The category $\DGCat$ carries a monoidal product $\otimes$, which is characterized by the property that continuous functors from $\cC \otimes \cD$ to $\Vect$ are the same thing as functors from $\cC \times \cD$ to $\Vect$ which are continuous in each argument separately. Given dg-algebras $A$ and $B$, we have 
	\[
	A-\dgmod \otimes B-\dgmod = A \otimes B-\dgmod.
	\]
	
	\begin{proposition}
		Suppose $\cC \in \DGCat$ is compactly generated category. Then $\cC$ is dualizable, with dual $\cC' := \Ind(\cC_c^{op})$.
	\end{proposition}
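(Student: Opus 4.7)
The plan is to exhibit explicit unit and counit functors for a duality pairing between $\cC$ and $\cC'$ and to verify the two triangle identities, leveraging three standard inputs: (i) compact generation, which gives $\cC \simeq \Ind(\cC_c)$ and so reduces continuous functors out of $\cC$ to arbitrary functors out of $\cC_c$; (ii) the compatibility of the monoidal structure on $\DGCat$ with Ind-completion, i.e.\ $\Ind(\cA)\otimes \Ind(\cB) \simeq \Ind(\cA\otimes \cB)$ for small stable $\C$-linear $\infty$-categories $\cA,\cB$; and (iii) the co-Yoneda statement that every object of $\cC$ is canonically the colimit of the diagram of compact objects equipped with a map to it.

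Input (ii) is a direct consequence of the universal property of the tensor product in $\DGCat$: continuous bilinear maps $\Ind(\cA)\times \Ind(\cB)\to \Vect$ correspond, by separate continuity, to bilinear maps $\cA\times \cB\to \Vect$, hence to functors $\cA\otimes \cB \to \Vect$, hence to continuous functors out of $\Ind(\cA\otimes \cB)$. Using (ii) I identify $\cC'\otimes \cC \simeq \Ind(\cC_c^{\op}\otimes \cC_c)$ and $\cC\otimes \cC'\simeq \Ind(\cC_c\otimes \cC_c^{\op})$. The evaluation $ev\colon \cC'\otimes \cC \to \Vect$ is then defined as the Ind-extension of the Hom pairing $(c_1,c_2)\mapsto R\bbHom(c_1,c_2)$ on $\cC_c^{\op}\otimes \cC_c$; the coevaluation $coev\colon \Vect \to \cC\otimes \cC'$ is the continuous functor sending $\C$ to the object of $\Ind(\cC_c\otimes \cC_c^{\op})$ corresponding to that same Hom pairing under the identification of $\Ind(\cC_c\otimes \cC_c^{\op})$ with $\Fun((\cC_c\otimes \cC_c^{\op})^{\op},\Vect)$.

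The main work is verifying the triangle identity for the composite
\[
\cC \simeq \Vect \otimes \cC \xrightarrow{coev\otimes \id} \cC \otimes \cC' \otimes \cC \xrightarrow{\id \otimes ev} \cC \otimes \Vect \simeq \cC,
\]
together with the symmetric statement for $\cC'$. The hard part will be tracking this composite through the definitions: since both continuous functors are characterized by their behaviour on compact objects and were built from the same Hom pairing, unwinding them shows that the composite sends a general $x\in \cC$ to a canonical colimit of the form $\colim_{c\to x} c$, indexed by compact $c\in \cC_c$ equipped with a map to $x$. Input (iii), which is essentially a restatement of $\cC\simeq \Ind(\cC_c)$ together with the Yoneda lemma in $\cC_c$, identifies this colimit with $x$ itself, establishing the first triangle identity. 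The $\cC'$-triangle identity follows by exactly the same argument with the roles of $\cC_c$ and $\cC_c^{\op}$ interchanged, and the proposition is then established. The only essentially nontrivial ingredient is (iii); everything else is a formal unpacking of universal properties.
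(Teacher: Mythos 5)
The paper states this proposition without proof, treating it as a known fact from the theory of presentable stable $\infty$-categories (Lurie, Gaitsgory's ``Generalities on DG-categories'', Ben-Zvi--Francis--Nadler), so there is no in-paper argument to compare against; I will assess your outline on its own merits. It reproduces the standard proof: commute $\Ind$ past $\otimes$, take evaluation to be the Ind-extension of the Hom-pairing on compacts, take coevaluation to select the Hom-pairing as an Ind-object of $\cC_c\otimes\cC_c^{\op}$, and reduce the triangle identities to density of $\cC_c$ inside $\cC=\Ind(\cC_c)$. The strategy is sound, but three spots deserve tightening. First, $\Ind(\cD)$ for a small stable $\cD$ is not all of $\Fun(\cD^{\op},\Vect)$ but only the full subcategory of exact functors; your coevaluation is still well-defined, since the Hom-pairing is exact in each variable, but the identification as you stated it is not an equivalence and should be corrected. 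Second, your input (ii), that $\Ind(\cA)\otimes\Ind(\cB)\simeq\Ind(\cA\otimes\cB)$, is a genuine theorem rather than a ``formal unpacking'': your argument only compares continuous functors to $\Vect$, which does not by itself pin down an object of $\DGCat$, and one must let the target range over all presentable stable categories (this is the content of the relevant result in Lurie's \emph{Higher Algebra}, \cite{lurie_higher_2012}, \S 4.8). Third, unwinding the triangle-identity composite more carefully produces the coend $\int^{c\in\cC_c}\, c\otimes R\bbHom(c,x)$ with respect to the $\Vect$-tensoring on $\cC$, not literally the conical colimit $\colim_{c\to x}c$; the two agree by the stable co-Yoneda lemma, but a careful write-up should make that identification explicit rather than conflating the two colimits. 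With those repairs the argument is complete and agrees with the treatment in the cited references.
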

	
	Note that if $\cC$ is compactly generated then we have an equivalence
	\[
	\Fun^{L}(\cC, \cD) = \cC' \otimes \cD.
	\]
	
	Suppose $\cC, \cD \in \DGCat$ are compactly generated, and
	\[
	L: \cC \rightleftarrows \cD:R
	\]
	are an adjoint pair of functors (i.e. $L$ is left adjoint to $R$). Then $R$ is continuous (i.e. $R$ preserves small colimits) if and only if $L$ is proper (i.e. $L$ sends compact objects to compact objects). In that case, there is an adjunction
	\[
	L_c: \cC_c \rightleftarrows \cD_c: R_c.
	\]
	Conversely, if 
	\[
	L_c : \cC_c \rightleftarrows \cD_c : R_c
	\]
	is an adjoint pair of functors between the subcategories of compact objects, then we have an adjunction:
	\[
	\Ind(L_c): \cC \rightleftarrows : \cD : \Ind(R_c).
	\]
	
	\begin{definition}
		We say that a diagram 
		\[
		L: \cC \rightleftarrows \cD:R
		\]
		in $\DGCat$ is a \emph{continuous adjunction} if $L$ is left adjoint to $R$, and $R$ is continuous (equivalently, $L$ is quasi-proper).
	\end{definition}
	
	The following concept will be useful for us later.
	\begin{definition}\label{definitionfiltration}
		A \emph{filtration} of an object $a$ in a stable category $\cC$, indexed by a poset $(I,\leq)$, is a functor 
		\begin{align*}
		(I,\leq)              & \to \cC/a   \\
		i   & \mapsto a_{\leq i},
		\end{align*}
		where $\cC/a$ denotes the \emph{overcategory} (or \emph{slice category}) whose objects are pairs $(c,\pi)$ where $c$ is an object of $\cC$ and $\pi:c\to a$ a morphism in $\cC$, and morphisms $(a,\pi) \to (a'\pi')$ are given by morphisms $f:a\to a'$ in $\cC$ such that $\pi'f = \pi$. 
		
		In the cases of interest to us, $I$ will be a finite poset with a maximal element $i_{max}$, and we demand in addition that $a_{\leq i_{max}}\to a$ is an isomorphism. Given a closed subset $Z$ of $I$ (i.e. if $j\in Z$ and $i\leq j$, then $i\in Z$), we define $a_Z$ to be $\colim_{j\in Z} a_\leq j$. For any subset $J$ of $I$, we define $a_J$ to be the cone of $a_{< J} \to a_{\leq J}$. In particular, for any $i\in I$, we set $a_i$ to be the cone of $a_{<i} \to a_{\leq i}$. Thus, we think of the object $a$ as being built from $a_i$ by a sequence of cones. The \emph{associated graded} object is defined to be $\bigoplus_{i\in I} a_i$.
				
		Note that there is no requirement for the maps to be injective (indeed, it is not clear what that would mean in this setting).
	\end{definition}
%

	\subsection{The Barr--Beck--Lurie Theorem}\label{subsectionbbl}
	
	Recall that a \emph{monad} acting on a category $\cC \in \DGCat$ is an algebra object $T$ in the monoidal category $\Fun(\cC,\cC)$. Associated to this data we have the category of \emph{$T$-modules}\footnote{Some authors use the term $T$-algebra instead of $T$-module; we prefer the latter terminology as it is more appropriate for the type of monads we consider.} See \cite{Lurie2017} Section 4.7 for further details. 
	\begin{example}\label{example-monad}
		Suppose $B$ is a dg-algebra, $\cC = B-\dgmod$, and $T$ is a continuous monad acting on $\cC$. The monad $T$ can be thought of as an algebra object $A$ in the category of $B$-bimodules, or equivalently, a dg $B$-ring (i.e. a dg-algebra $A$ together with a morphism $B\to A$). The category of modules $\cC^T$ is equivalent $A-\dgmod$ (i.e. $A$-modules in $B-\dgmod$ are the same thing as $A$-modules in $\Vect$).
	\end{example}
	
	Let
	$
	F:\cD \to \cC
	$
	be a continuous functor with a left adjoint $F^L$ between compactly generated, stable, presentable $\infty$-categories. Denote by $T=FF^L$ the corresponding monad. We have $\cC^T$ the category of $T$-modules (also known as $T$-algebras) in $\cC$. Note that for any object $d\in \cD$, $F(d)$ is a module for $T$. Thus we have the following diagram:
	\[
	\xymatrix{
		\cD \ar[rr]^F \ar[rd]_{\widetilde{F}} & & \cC\\
		& \cC^T \ar[ru] &
	}
	\]
	\begin{definition}
		A functor $F: \cD \to \cC$ is called \emph{conservative} if whenever $F(x) \simeq 0$
		then $x \simeq 0$.\footnote{The usual definition of a conservative functor is a functor $F$
			such that if $F(\phi)$ is an isomorphism, then $\phi$ is an isomorphism. This definition is
			equivalent to the one above, in our context(s), by considering the cone (or the kernel and cokernel)
			of $\phi$.}
	\end{definition}
	\begin{theorem}[Barr--Beck \cite{barr_toposes_1985}, \cite{Lurie2012} Theorem 6.2.0.6, 
		\cite{Lurie2017} Theorem 4.7.3.5
		]\label{theorembarrbeck}
		The functor $\widetilde{F}:\cD \to \cC^T$ has a fully faithful left adjoint, $J$. If $F$ is conservative, then $\widetilde{F}$ and $J$ are inverse equivalences.
	\end{theorem}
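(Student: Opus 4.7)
The plan is to construct the left adjoint $J$ via a bar resolution and then exploit the splitness of certain auxiliary simplicial objects to verify the required equivalences.

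First I would define $J$ on a $T$-module $(M,\mu\colon TM\to M)$ as the colimit
\[
J(M) \ := \ \colim_{[n]\in \Delta^{\op}} \ F^L T^n M,
\]
where the simplicial object $B_\bullet(F^L,T,M)$ has face maps built from $\mu$, the multiplication $TT\to T$ (which comes from the counit $F^L F\to \id$ sandwiched appropriately), and degeneracy maps from the unit $\id\to T$. In the stable presentable (or Grothendieck abelian) setting all such colimits exist, and continuity (resp.\ exactness and cocontinuity) of $F$ and $F^L$ ensures that the construction behaves well under the ambient functors. The adjunction $J\dashv \widetilde{F}$ is then a formal calculation using the universal property of geometric realizations together with $F^L\dashv F$: a map $J(M)\to d$ is a compatible family of maps $F^L T^n M\to d$, i.e.\ a compatible family of maps $T^n M\to F(d)$, i.e.\ a map of $T$-modules $M\to \widetilde{F}(d)$.

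The next step is fully faithfulness of $J$, which amounts to showing that the unit $M\to \widetilde{F}J(M)$ is an equivalence. The key observation is that applying $F$ to the augmented simplicial object $B_\bullet(F^L,T,M)\to J(M)$ yields the standard bar resolution $B_\bullet(T,T,M)\to M$, and this augmented simplicial object is \emph{split} (an extra degeneracy is provided by the unit $\eta\colon \id\to T$ at the level of $M$). Split augmented simplicial objects have the augmentation as their colimit in any category, so $F J(M)\simeq M$, which upgrades to the desired equivalence of $T$-modules. This is where we quietly use that $F$ preserves the relevant colimit, which in our two contexts is automatic.

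For the conservativity statement, I would show the counit $J\widetilde{F}(d)\to d$ is an equivalence. Since $F$ is conservative, it suffices to check after applying $F$. But $F(J\widetilde{F}(d))$ is the colimit of the simplicial object $B_\bullet(T,T,F(d))$, which is once again split augmented over $F(d)$ via $\eta$; hence the counit becomes an equivalence after $F$, and conservativity concludes.

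The main obstacle is the rigorous handling of the simplicial object and its splitness in the $\infty$-categorical setting: one must invoke Lurie's result that split augmented simplicial objects are absolute colimits (preserved by \emph{any} functor), so that the bar construction behaves correctly after applying $F$. Once this piece of machinery is in place, the rest of the argument is the formal Barr--Beck calculus.
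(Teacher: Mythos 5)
The paper does not actually prove this statement---it cites Barr--Wells and Lurie's \emph{Higher Algebra}---but the remark immediately following describes the left adjoint $J$ as the geometric realization of the bar resolution $F^LT^\bullet M$, which is precisely your construction. Your sketch is therefore the standard argument and is essentially correct; in particular, you correctly flag the technical crux, namely that $F$ must preserve the realization of the bar resolution (automatic in both of the paper's contexts, as $F$ is continuous, resp.\ exact and preserves direct sums), after which the splitness of $B_\bullet(T,T,M)$ gives $FJ(M)\simeq M$, and conservativity of $F$ (together with the conservativity of the forgetful functor $\cC^T\to\cC$, which you use implicitly to upgrade the equivalence from $\cC$ to $\cC^T$) closes the argument.
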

	\begin{remark}\label{remark formula bb}
		\begin{enumerate}
			\item The essential image of $J$ is given by the subcategory of $\cD$ generated under colimits by the essential image of $F^L$.
			\item Given a $T$-module $c$ in $\cC$, we have a simplicial diagram in $\cD$:
			\begin{equation}\label{diagramsimplicial}
			\xymatrix{
				F^Lc & F^LFF^Lc \ar@/^/[l] \ar@/_/[l]
				& F^LFF^LFF^Lc \ar@/^/[l] \ar@/_/[l] \ar[l]
				& \ldots \ar@/^/[l] \ar@/_/[l] \ar@/_1pc/[l] \ar@/^1pc/[l]
			}
			\end{equation}
			The object $J(y)$ is given by the colimit of Diagram \ref{diagramsimplicial}. Indeed, applying the functor $F$ to diagram \ref{diagramsimplicial} is the canonical simplicial resolution of $y \in \cC$ (the bar construction). 
		\end{enumerate}
	\end{remark}
	
	\begin{example}[Koszul duality]\label{example-koszul}
	Let $V$ be a vector space, and consider the graded algebras $\Lambda = \Sym(V[1])$ and $S=\Sym(V^\ast[-2])$ (note that $\Lambda$ is an exterior algebra when considered as an ungraded algebra). Each of these algebras is equipped with an augmentation module which we denote simply by $\C$. This defines a functor
	\[
	R\bbHom_\Lambda(\C,-):\Lambda-\dgmod \to \Vect
	\]
	One can check that this functor is conservative ($\C$ is a generator) but is not continuous ($\C$ is not compact). One can fix this defect by considering the category of \emph{Ind-Coherent} $\Lambda$-modules: by definition, this is given by $\Ind(\Lambda-\Coh)$, where $\Lambda-\Coh$ is the subcategory of $\Lambda-\dgmod$ consisting of complexes with bounded cohomology, where each cohomology group is finite dimensional. By construction, $\C$ is now a compact object of $\Ind(\Lambda-\Coh)$, and we obtain a functor
	\[
	R = R\bbHom_{\Ind(\Lambda-\Coh)}(\C,-): \Ind(\Lambda-\Coh) \to \Vect
	\]
	which now satisfies the conditions of the Barr-Beck-Lurie theorem. The corresponding monad on $\Vect$ is precisely the dg-algebra $S$. Thus we obtain an equivalence
	\[
	\Ind(\Lambda-\Coh) \simeq S-\dgmod
	\]
	Restricting to the subcategory of compact objects gives the more familiar presentation
	\[
	\Lambda-\Coh \simeq S-\Perf.
	\]
	\end{example}

	\subsection{From adjunctions to recollement situations}\label{sec:localizations-and-recollements}
	A \emph{recollement situation} in $\DGCat$ is a diagram
		\[
	\xymatrixcolsep{5pc}
	\xymatrix{
		\cK \ar[r]|-{\ffi_{\ast}} & \ar@/_1pc/[l]_-{\ffi^\ast}\ar@/^1pc/[l]^-{\ffi^!} \cD \ar[r]|-{\fj^\ast} & \ar@/^1pc/[l]^-{\fj_{!}} \ar@/_1pc/[l]_-{\fj_{\ast}} \cQ 
	}
	\]
	where:
	\begin{enumerate}
			\item The functor $\fj^\ast$ is left adjoint to $\fj_\ast$ and right adjoint to $\fj_!$.
			\item The functor $\ffi_\ast$ is left adjoint to $\ffi^!$ and right adjoint to $\ffi^\ast$.
			\item The counit morphisms $\ffi^!\ffi_\ast \to \id_\cK$, $\fj^\ast \fj_\ast \to \cQ$, and the unit morphisms $\id_\cK \to \ffi^\ast \ffi_\ast$, $\id_\cQ \to \fj^\ast \fj_!$ are isomorphisms (equivalently, the functors $\fj_\ast$, $\fj_!$, and $\ffi_\ast$ are fully faithful).
			\item The essential image of $\ffi_\ast$ is the kernel of $\fj^\ast$. The essential image of $\fj_\ast$ (respectively, $\fj_!$) is the right (respectively, left) orthogonal to the essential image of $\ffi_\ast$.
			\item There are distinguished triangles of idempotent endofunctors in $\Fun(\cD,\cD)$:
			\begin{align*}
				\ffi_\ast \ffi^! \to \id_\cD \to \fj_\ast \fj^\ast \to \\
				\fj_! \fj^\ast \to \id_\cD \to \ffi_\ast \ffi^\ast \to    
			\end{align*}
	\end{enumerate}
	Given a recollement situation as above, the category $\cD$ can be reconstructed from the $\cK$ and $\cQ$ together with the functor $\ffi^\ast\fj_\ast$ (see e.g. \cite{Ayala2019}). However, we will not need this result in its generality as, in the case of interest, our recollement will be \emph{split}, in the sense of the following standard result. 
	
		\begin{proposition}\label{prop:split recollement}
		Suppose we are in the situation of Theorem \ref{theoremcoloc}. Then the following are equivalent
		\begin{enumerate}
			\item $R\bbHom (\ffi_\ast (k),\fj_!(q)) \simeq 0$ for all objects $k$ of $\cK$ and $q$ of $\cQ$; \label{p1}
			\item $\ffi^! \fj_! \simeq 0$; \label{p2}
			\item $\ffi^\ast \fj_\ast \simeq 0$; \label{p3}
			\item the natural map $\fj_! \to \fj_\ast$ is an isomorphism; \label{p4}
			\item the natural map $\ffi^! \to \ffi^\ast$ is an isomorphism; \label{p5}
		\end{enumerate} 
	\end{proposition}
		We say that the recollement is \emph{split} if any of these equivalent conditions are satisfied. In this case, every object $d$ of $\cD$ can be written as a direct sum $\ffi_\ast \ffi^!(d) \oplus \fj_\ast \fj^\ast(d)$. Moreover, there is an equivalence of stable infinity categories $\cD \simeq \cK \oplus \cQ$, where the right hand side denotes the biproduct in $\Cat$ (see e.g. \cite[Example 1.3.4]{Ayala2019}).

Suppose $F:\cD\to \cC$ is a continuous functor which admits a left adjoint $F^L$. In this subsection, we will show how $F$ induces a recollement situation on $\cD$, in which $\cD$ is glued together from the kernel of $F$ and (an appropriate completion of) the image of $F^L$.  We obtain this result by an application of the Barr-Beck-Lurie Theorem.

This result is analogous to the following idea from linear algebra.
	
\begin{example}
Suppose $f:D\to C$ is a homomorphism of finite dimensional complex vector spaces. 
We have a commutative diagram:
\[
\xymatrix{
0 \ar[r] & \ker(f)  \ar[r]^-i & D \ar[r]^-j \ar[d]_f & D/\ker(f) \ar[ld] \ar[r] & 0\\
 && C &  &
 }
\]
where the top row is exact. If in addition $C$ and $D$ are equipped with hermitian inner products, then we get a splitting of the sequence:
\[
D = \ker(f) \oplus \im(f^\dagger)
\]
where $f^\dagger$ is the adjoint of $f$. 
\end{example}

Now let us return to the categorical situation. To begin with, suppose 
\[
F:\cD \to \cC
\]
is any continuous functor in $\DGCat$ (so $F$ admits a possibly non-continuous right adjoint $F^R$). Let $\cK$ denote the kernel of $F$: the full subcategory of $\cD$ consisting of objects $d$ such that $F(d) \simeq 0$. As $F$ is continuous, the kernel $\cK$ is closed under colimits in $\cD$, or equivalently, the inclusion functor $\cK \to \cD$ is itself continuous (and thus admits a right adjoint). It is shown in \cite[Section 5]{blumberg_universal_2013} that the Verdier quotient $\cQ = \cD/\cK$, defined as cofiber of the inclusion $\cK \to \cD$ in $\DGCat$, is a Bousfield localization, i.e. the quotient map $\cD \to \cQ$ admits a fully faithful right adjoint, whose essential image is the right orthogonal to $\cK$. Thus we have a commutative diagram:
	\[
\xymatrix{
	\cK \ar[r]^{\ffi_{\ast}} &
	  \cD \ar[d] \ar[r]^{\fj^\ast} & 
	  \cQ \ar[ld] \\
	& \cC & 
}
\]
where the top row is an exact sequence in $\DGCat$ (in the sense of \cite{blumberg_universal_2013}).

The following result 
states that if we further assume $F$ to have a left adjoint, then this short exact sequence of categories may be strengthened to a recollement situation. 

	\begin{theorem}\label{theoremcoloc}
Let
$
F: \cD \too \cC
$
be a continuous functor in $\DGCat$ which admits a left adjoint $F^L$ (note that $F$ automatically admits a possibly non-continuous adjoint $F^R$). Then there is a diagram:

	\[
\xymatrixcolsep{5pc}
\xymatrix{
	\cK \ar[r]|-{\ffi_{\ast}} & \ar@/_1pc/[l]_-{\ffi^\ast}\ar@/^1pc/[l]^-{\ffi^!} \cD \ar[d]^-F \ar[r]|-{\fj^\ast} & \ar@/^1pc/[l]^-{\fj_{!}} \ar@/_1pc/[l]_-{\fj_{\ast}} \cQ \ar@/^1pc/[ld] \\
	& \cC &
}
\]
where $\ffi_\ast:\cK \to \cD$ is the inclusion of the kernel of $F$, $\fj^\ast: \cD \to \cQ = \cD/\cK$ is the Verdier quotient, and the top row of the diagram is a recollement situation. Specifically:
		\begin{enumerate}
				\item The functor $\fj^\ast$ is left adjoint to $\fj_\ast$, and right adjoint to $\fj_!$.
			\item The functors $\fj_\ast$, $\fj_!$, and $\ffi_\ast$ are fully faithful;
			\item There are distinguished triangles of idempotent endofunctors
			\begin{align*}
			\ffi_\ast \ffi^! \to \id_\cD \to \fj_\ast \fj^\ast \to \\
			\fj_! \fj^\ast \to \id_\cD \to \ffi_\ast \ffi^\ast \to    
			\end{align*}
		\end{enumerate}
Moreover, the projection functor $\fj_!\fj^\ast$ may be computed as the bar construction:
\[
\fj_!\fj^\ast = \colim (F^LF
 \leftleftarrows (F^LF) ^2 \ldots)
\]
In particular, the Verdier quotient $\cQ$ is canonically identified with the category $\cC^T$ of modules for the monad $T=FF^L$, and the splitting of $\cQ$ as the left orthogonal of $\cK$ in $\cD$ is identified with the cocompletion of the essential image of $F^L$.
	\end{theorem}
\begin{proof}
As explained in \cite{blumberg_universal_2013}, the existence of the right adjoints $\ffi^!, \fj_\ast$, and the first of the distinguished triangles in Part (3) of the theorem follows from the general theory of Bousfield localizations. 

The remainder of the claims follow from an application of the Barr-Beck-Lurie theorem. Indeed, by Theorem \ref{theorembarrbeck} there is a canonical enhancement of $F$ to a functor
\[
\widetilde{F}: \cD \to \cC^T
\]
with a fully faithful left adjoint $\widetilde{F}^L$.
By the universal property of the quotient, the functor $\widetilde{F}$ factors as follows:
\[
\xymatrix{
\cD \ar[r] \ar[rd] &\cQ \ar[d] \\ 
 & \cC^T
}
\]
Moreover, the right vertical arrow is an equivalence: its inverse is given by the composite 
\[
\xymatrix{
\cC^T \ar[r]^-{\widetilde{F}^L} &\cD \ar[r]^-{\fj^\ast} & \cQ.
}
\]
Thus we obtain the fully faithful left adjoint $\fj_!$ to $\fj^\ast$ by identifying $\cQ$ with $\cC^T$.  The functor $\ffi^\ast$ is then uniquely determined by the projection $\ffi_\ast\ffi^\ast$ which is defined as the cofiber of the map $\fj_!\fj^\ast \to \id_\cD$.
This establishes the recollement situation.
Using the identification of $\cQ$ and $\cC^T$ mentioned above, the formula for $\fj_! \fj^\ast$ follows immediately from the formula for $\widetilde{F}^L$ in Remark \ref{remark formula bb}.
\end{proof}

\begin{remark}
	With the assumptions of Theorem \ref{theoremcoloc}, there is also an equivalence $\cQ = \cC_{S}$, where $S=FF^R$ is the associated comonad. Thus there is an analogous formula for the projection $\fj_\ast \fj^\ast$ as the totalization of a cobar construction.
\end{remark}

\begin{example}
Suppose $f:X\to Y$ is an \'etale map of smooth varieties, $j: U \hookrightarrow Y$ the Zariski open image of $f$, and $i: Z\hookrightarrow Y$ its closed complement. Consider the $D$-module inverse image functor $f^!: \bD(Y) \to \bD(X)$. This gives rise to a diagram:
	\[
\xymatrix{
	\bD(Z) \ar[r]^{\ffi_{\ast}} &
	  \bD(Y) \ar[d] \ar[r]^{\fj^\ast} & 
	  \bD(U) \ar[ld] \\
	& \bD(X) & 
}
\]
However, this diagram is not a recollement situation: $f^!$ (or equivalently $j^!$) does not have a left adjoint on the entire category of $D$-modules in general. One may obtain a recollement situation by restricting to the category of ind-holonomic $D$-modules (or ind-constructible complexes of sheaves of vector spaces, say). In light of this, the existence of a recollement situation on the entire category of $D$-modules (as appears in the present paper and \cite{mcgerty_morse_2014}, for example) is somewhat remarkable.
\end{example}

	\begin{remark}
		If the functor $F$, in addition, takes compact objects in $\cD$ to compact objects in $\cC$, then the right adjoint $F^R$ preserves colimits. In that case, we can restrict the adjunction to the subcategories of compact objects, and the constructions of Theorem \ref{theoremcoloc} can be done in the category of small stable $\infty$-categories. Thus the recollement restricts to one on the level of small categories of compact objects, or equivalently, all the morphisms involved in the recollement are colimit-preserving. 
	\end{remark}
	
	\begin{remark}
	If we have an ambidextrous adjunction in the context of Theorem \ref{theoremcoloc}, that is, an isomorphism $F^L \simeq F^R$, then the induced recollement splits. Indeed, given $k\in \ker(F) = \im(\ffi_\ast)$, write $k = \colim_i k_i$ where $k_i$ are compact (and thus $\ffi_\ast(k_i)$ are compact, as $F^R\simeq F^L$ is continuous). Given a general object $q \simeq \fj^\ast(d) \in \cQ$, we have $\fj_!(q) \simeq \colim_j ((F^LF)^j(d))$. Then 
	\begin{align*}
		R\bbHom_{\cD}(\ffi_\ast (k), \fj_!(q)) &\simeq R\bbHom_{\cD}(\ffi_\ast (\colim_i k_i), \colim_j (F^L F)^j(d))\\
		&\simeq \lim_i \colim_j R\bbHom_{\cD}(\ffi_\ast (k_i), (F^L F)^j(d))\\
			&\simeq \lim_i \colim_j R\bbHom_{\cD}(\ffi_\ast (k_i), (F^R F)^j(d))\\
			&\simeq \lim_i \colim_j R\bbHom_{\cD}(F(\ffi_\ast (k_i)), F(F^L F)^{j-1}(d))\\
			&\simeq 0.
	\end{align*}
	However, ambidexterity is not a necessary condition for the associated recollement to be split as the main results of this paper demonstrate.  
\end{remark}
	%

\subsection{$D$-modules on stacks}\label{sec:d-modules-on-stacks}
In this section we record some general properties about $D$-modules that we will need in the paper.

Let $Y$ be an algebraic variety, and $K$ an affine algebraic group acting on $Y$; let $X=Y/K$ denote the quotient stack (in general, the term \emph{quotient stack} will refer to a stack that is of this form--these are the only stacks we will need to consider in this paper).

We will associate to such a stack $X$ a stable, presentable DG-category $\bD(X)$, equipped with a $t$-structure. The definition of $\bD(X)$, as well as further discussion and motivation, may be found in \cite{ben-zvi_character_2009}, Section 4. See also \cite{drinfeld_some_2013}, Section 6, and \cite{gaitsgory_study_2017a}, Section III.4.

If $X$ is a smooth variety, the (triangulated) homotopy category of $\bD(X)$ is the derived category of (the abelian category of) $D$-modules on $X$. There are many references on $D$-modules using the theory of triangulated categories setting--see, for example, \cite{hotta_d_2008} (especially Chapters 1 and 2). If $X = Y/K$ is a quotient stack, then the homotopy category of $\bD(X)$ (or rather, the analogous object in the context of constructible sheaves) was described by Bernstein--Lunts \cite{bernstein_equivariant_1994}; see also \cite[Section 7]{beilinson_quantization_} for a different perspective in the $D$-modules setting.

 The primary motivation for working in the DG context is so we may make use of the Barr-Beck-Lurie theorem (here, the DG/$\infty$-categorical enhancements are essential). Moreover, many definitions and proofs are simpler and more intuitive in the DG setting.\footnote{For example, in Section \ref{sec:localizations-and-recollements} we define a functor as the cone of a natural transformation of functors; this definition is not immediately available in the triangulated setting, as the category of functors between two triangulated categories is not itself triangulated in general.} However, for most of the paper, the reader may substitute the DG category $\bD(X)$ with its the more familiar triangulated (equivariant) derived category without losing any of the key ideas. 

The following example is useful to get a feel for how the equivariant derived category deals with stabilizer groups at a point.

The category of compact objects is denoted $\bD_{com}(X)$. Thus we have $\bD(X) = \Ind \bD_{com}(X)$. Denote by $\bD_{coh}(X)$ the category of bounded complexes each of whose cohomology objects is a coherent $D$-module. It is known that $\bD_{com}(X) \subset \bD_{coh}(X)$, with equality when $X$ is a variety, but for a non-safe stack the inclusion is strict (see Example \ref{exampleindcoh} below). 

\begin{example}[\cite{drinfeld_some_2013}, 7.2]\label{exampleBG}
		Given an affine algebraic group $K$, we have 
	\[
	\bD(pt/K) \simeq C^\ast(K)-\dgcomod \simeq C_\ast(K)-\dgmod
	\] 
	The coherent objects $\bD_{coh}(pt/K)$ correspond to the subcategory $C_\ast(K)-\Coh$ in $C_\ast(K)-\dgmod$; on the other hand, the compact objects $\bD_{com}$ correspond to $C_\ast(K)-\Perf$ (see Example \ref{example-koszul}, which applies directly to the case where $K$ is a connected reductive group).  
	
	Note that the heart of the $t$-structure on $\bD(pt/K)$ is the abelian category
	\[
	\bM(pt/K) = H_0(K)-\module = \Rep(\pi_0(K))
	\]
	 In particular, if $K$ is connected then $\bM(pt/K)$ is equivalent to the abelian category of vector spaces, and if $K$ is not contractible then $\bD(pt/K)$ is not equivalent to the derived category of $\bM(pt/K)$. On the other hand, if $K$ is unipotent, there is an equivalence of DG categories $\bD(pt/K) \simeq \Vect$.
\end{example}

The following result states that $\bD(X)$ is always compactly generated in the cases of interest in this paper.
\begin{proposition}[\cite{drinfeld_some_2013} Theorem 8.1.1]
	Let $X$ be a quotient stack.\footnote{This holds for a much broader class of stacks, e.g. QCA and perfect stacks} Then $\bD(X)$ is compactly generated by inductions from coherent sheaves on $X$.
\end{proposition}

Now let us now recall the functoriality properties of $D$-modules. We will want to consider morphisms of quotient stacks that are slightly more general than representable maps (which may just be thought of as equivariant maps of varieties). In general, the $D$-module functors may not be well-behaved, however we will restrict attention to \emph{safe} morphisms (see \cite[Definition 10.2.2]{drinfeld_some_2013}). Safety guarantees that the $D$-module pushforward is continuous. The safe morphisms appearing in this paper will always be composites of a representable morphism and a unipotent gerbe. The example to keep in mind is the projection from $pt/U \to pt$, where $U$ is a unipotent algebraic group (in fact, the pullback and pushforward functors on $D$-modules are derived equivalences here).

\begin{proposition}\label{propositiondmodulefacts} Given a safe morphism of quotient stacks $f: X \to Y$, we have continuous functors:
	\[
\xymatrix{	f_\ast : \bD(X) \ar[r]& \bD(Y),}
	\]
	\[
\xymatrix{	f^!: \bD(Y) \ar[r]& \bD(X),}
	\]
and
	\[
\xymatrix{	\D_X:\bD(X) \ar[r]^-\sim& \bD(X)'}
	\]
	(where $\bD(X)' = \Ind(\bD_{c}^{op})$ is the dual category).
	\begin{enumerate}
		\item \label{proper} If $f$ is proper, then $f_\ast \simeq \D_Y f_\ast \D_X$ preserves coherence and is right adjoint
		to $f^!$. We sometimes write $f_!$ instead of  $f_\ast$ in that case.
		\item \label{smooth} If $f$ is smooth of relative dimension $d$, then $f^!$ preserves coherence and 
		$f^\ast := f^! [-2d]$ is left adjoint to $f_\ast$.
		The functor $f^\circ = f^![-d]$ $t$-exact, and $f^\circ \simeq \D_X f^\circ \D_Y$.
		\item \label{base-change} If 
		\[
		\xymatrix{
			X \times _W V \ar[r]^-{\tilde{f}} \ar[d]_-{\tilde{g}} & V \ar[d]^-g \\
			X \ar[r]_-f & W
		}
		\]
		is a cartesian diagram of stacks, then the base change morphism is an
		isomorphism: \[
		f^! g_\ast \simeq \tilde{g}_\ast \tilde{f}^!.
		\]
		\item \label{projection} We have the projection formula:
		\[
		f_\ast \left( f^! \fM \otimes \fN \right) \simeq \fM \otimes f_\ast (\fN).
		\]
Moreover, the category $\bD(X)$ carries a symmetric monoidal tensor product 
		\[
		{\fM}\otimes  {\fN} := \Delta^! ({\fM}\boxtimes {\fN}) \simeq \fM \otimes_{\cO_X} \fN [-\dim(X)].
		\]
	\end{enumerate}
\end{proposition}
\begin{proof}
These results are now standard in the setting of triangulated categories of $D$-modules on smooth varieties. A reference in this setting is the book \cite{hotta_d_2008}: see Sections 1.5 and 2.6 for the definitions of the functors; Theorem 2.7.1, Theorem 2.7.2, Corollary 2.7.3 for parts \ref{proper} and \ref{smooth}; Theorem 1.7.3 for part \ref{base-change}; Corollary 1.7.5 for \ref{projection}.

In the setting of dg-categories of $D$-modules on stacks, a reference for these facts (for a much larger class of stacks than we require) is \cite{drinfeld_some_2013}. In particular, see Sections 6.2, 7.4, and 8.4 for the definitions of the functors; Section 7.3 for part \ref{smooth}; Theorem 10.6.2 for part \ref{proper}; Corollary 9.3.14 for \ref{base-change}; Theorem 10.2.4 for \ref{projection}.
\end{proof}

\begin{remark}
	For a non-safe morphism of stacks $f$ (e.g. the projection $pt/G \to pt$ for a reductive group $G$), to obtain a continuous functor one should take the continuous extension of the restriction of $f_\ast$ to compact objects. This is called the \emph{renormalized} pushforward, and is denoted $f_\blacktriangle$. Alternatively, one can replace $D$-modules with Ind-coherent $D$-modules as defined in Section \ref{sec:ind-coherent-d-modules}.
\end{remark}

The continuous extension of the Verdier duality functor defines a self duality of the category $\bD(X)$. The functor $f_\ast$ is dual to $f^!$ with respect to this self-duality. This gives rise to a a good theory of integral transforms for $D$-modules.
\begin{proposition}[\cite{gaitsgory_functors_2013}, 1.2; \cite{ben-zvi_character_2009}, Section 5]\label{prop-inttrans}
	Given quotient stacks $X$ and $Y$, we have an equivalence:
	\[
	\xymatrix{
		\bD(X\times Y) \ar[r]^-\sim & \ar[l] \Fun^L(\bD(X), \bD(Y)) \ar[r]^-\sim & \ar[l] \bD(X) \otimes \bD(Y). \\
		\fK \ar@{|->}[r] & \Phi_\fK := q_{Y \blacktriangle} \left( \fK \otimes q_{X}^! ( - ) \right) &.
	}
	\]
	where
	\[
	X \xleftarrow{q_X} X\times Y \xrightarrow{q_Y} Y,
	\]
	are the projections. We refer to $\fK$ as the kernel corresponding to the functor $\Phi_\fK$. 
\end{proposition}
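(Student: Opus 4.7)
The plan is to establish the three equivalences in sequence, using the general theory of dualizable categories in $\DGCat$ together with the specific structure of $\bD(X)$ as a compactly generated category with a canonical self-duality given by Verdier duality.

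First, I would establish the Künneth-type equivalence $\bD(X) \otimes \bD(Y) \simeq \bD(X\times Y)$. There is an evident external product functor $\boxtimes: \bD(X) \otimes \bD(Y) \to \bD(X\times Y)$ obtained from the bilinear assignment $(\fM, \fN) \mapsto q_X^! \fM \otimes q_Y^! \fN$ (with appropriate cohomological shift). To show it is an equivalence, I would reduce to the case where $X$ and $Y$ are smooth schemes by descent along smooth covers (both sides satisfy smooth descent), and then to the affine case, where it amounts to the standard fact that $\fD_X$-modules tensored with $\fD_Y$-modules generate $\fD_{X\times Y}$-modules. Compact generation is preserved by the tensor product in $\DGCat$, so the check that the functor is an equivalence can be performed on compact objects.

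Next, I would record Verdier self-duality: for a quotient stack $X$, the category $\bD(X)$ is dualizable in $\DGCat$ (this follows from compact generation, by the general fact recalled before Proposition \ref{prop-inttrans}), and the continuous extension of Verdier duality $\D_X: \bD_{com}(X)^\op \isom \bD_{com}(X)$ yields an identification $\bD(X)^\vee \simeq \bD(X)$. Under this self-duality, the adjunction between $f_\blacktriangle$ and $f^!$ (extended continuously to all of $\bD$) translates into the statement that $f_\blacktriangle$ and $f^!$ are mutually dual functors. The evaluation and coevaluation maps of the self-duality are given, respectively, by $\Delta^! \circ \otimes_{\cO_X}$ composed with pushforward to a point, and by the $!$-pullback of the dualizing object along the diagonal.

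With these two ingredients in hand, the general theory of dualizable objects in a symmetric monoidal $(\infty,1)$-category gives
\[
\Fun^L(\bD(X), \bD(Y)) \simeq \bD(X)^\vee \otimes \bD(Y) \simeq \bD(X)\otimes\bD(Y) \simeq \bD(X\times Y),
\]
completing the chain of equivalences. Tracing through the identifications, the kernel $\fK \in \bD(X\times Y)$ corresponds to the functor $\fM \mapsto \langle \fM, \fK \rangle_X$, where the pairing in the first variable is realized by $!$-pullback followed by $\otimes$ and renormalized pushforward; unwinding gives exactly the formula $\Phi_\fK(\fM) = q_{Y\blacktriangle}(\fK \otimes q_X^! \fM)$.

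The main obstacle is the handling of the renormalized pushforward $q_{Y\blacktriangle}$, since the projection $q_Y: X\times Y \to Y$ need not be safe when $X$ is a stack (e.g. $X = pt/G$). One must be careful that the evaluation pairing used in the duality is realized by the continuous (renormalized) pushforward rather than the naive $q_{Y\ast}$; equivalently, the duality is cleanest when phrased using Ind-coherent $D$-modules as in Section \ref{sec:ind-coherent-d-modules}, where $f_\ast$ is automatically continuous and the projection formula and base change hold without safety hypotheses. In our setting, since $\bD_{com}(X)$ and $\bD_{com}(Y)$ are compactly generated and the external product sends compacts to compacts, the continuous extensions of all the structure maps agree with the renormalized ones, so the identification of $\Phi_\fK$ with the stated formula is automatic once the abstract equivalence is established.
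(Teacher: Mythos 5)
The paper states this result as a citation from \cite{gaitsgory_functors_2013} and \cite{ben-zvi_character_2009} and gives no internal proof; the only hint in the text is the remark preceding the proposition that the continuous extension of Verdier duality makes $f_\ast$ dual to $f^!$ and hence yields a good theory of integral transforms. Your reconstruction --- the K\"unneth equivalence $\bD(X)\otimes\bD(Y)\simeq\bD(X\times Y)$, the Verdier self-duality identification $\bD(X)^\vee\simeq\bD(X)$ coming from compact generation, and the abstract dualizability formula $\Fun^L(\cC,\cD)\simeq\cC^\vee\otimes\cD$, unwound via the evaluation/coevaluation maps to recover $\Phi_\fK = q_{Y\blacktriangle}(\fK\otimes q_X^!(-))$ --- is precisely the argument given in those references, so the approach is correct. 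Two small technical points worth tightening if this were to be spelled out fully: in the K\"unneth step you invoke smooth descent, but the tensor product in $\DGCat$ commutes with colimits rather than limits, so one needs the standard maneuver of passing to the colimit presentation of $\bD(Y/K)$ via the continuous adjoints to the cosimplicial structure maps (or equivalently, to check on compact generators as you also suggest); and in the final paragraph you write $\bD_{com}(X)$, $\bD_{com}(Y)$ where you mean $\bD(X)$, $\bD(Y)$ (the subcategories of compact objects are small and not themselves compactly generated). Neither affects the substance; your identification of the renormalized pushforward as the genuine subtlety when $q_Y$ is not safe is exactly right.
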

\begin{proof}
	The identification $\Fun^L(\bD(X),\bD(Y))$ with $\bD(X\times Y)$ follows from the fact that $\bD(X)$ is self dual (\cite{drinfeld_some_2013}, Corollary 8.4.3, Corollary 8.3.4). For the formula for the functor $\Phi_\fK$ corresponding to a kernel $\fK$ see \cite{gaitsgory_functors_2013} 0.2.1.
	\end{proof} 
\begin{example}\label{example-int-kernel}
If 	
\[
X \xleftarrow{f} Z \xrightarrow{g} Y,
\]
is a diagram of smooth quotient stacks, then the functor 
$f_\ast g^!$ is represented by the integral kernel $\fK_Z = (f\times g)_{\ast} \omega_{Z}$ (this follows from the projection formula).
\end{example}

\begin{remark}\label{remark-strat}
If $Z$ has a partition $Z = \bigsqcup_{i\in I} Z_i$ in to locally closed substacks, then any object of $\bD(Z)$ on $Z$ is filtered by $I$. In that case the functor $f_\ast g^!$ (or equivalently, the object $\fK_{Z} \in \bD(X\times Y)$) has a filtration indexed by $I$ (see Definition \ref{definitionfiltration}): the functor
$(I,\leq) \to \Fun(\bD(X),\bD(Y))/(f_\ast g^!)$ is given by $i \mapsto f_{\leq i, \ast}g_{\leq i}^!$ where $f_{\leq i}$ (respectively, $g_{\leq i}$) is the restriction of $f$ (respectively, $g$) to the closed substack $Z_{\leq i} := \bigsqcup_{j\leq i} Z_j$.
\end{remark}

\begin{remark}
	The category $\bD_{hol}(X)$ of holonomic complexes is the subcategory of $\bD_{coh}(X)$ consisting of complexes whose cohomology objects are holonomic. Although coherent complexes are not always preserved by the $D$-module functors, the holonomic subcategory is preserved (for safe morphisms). By Verdier duality it follows that we have the full six functors, including adjoint pairs $(f^\ast,f_\ast)$ and $(f_!,f^!)$ on holonomic complexes for any safe morphism $f$.
\end{remark}

\subsection{Ind-Coherent $D$-modules}\label{sec:ind-coherent-d-modules}

 We define the category of \emph{Ind-coherent} $D$-modules on $X$ by
\[
\bcD(X) = \Ind \bD_{coh}(X)
\]
By construction, the compact objects of $\bcD(X)$ are exactly the coherent complexes.
\footnote{This variant of the category of $D$-modules was considered by Arinkin and Gaitsgory in the context of geometric Satake \cite{arinkin_singular_2015}, where it is referred to as the renormalized category. The construction is closely related to the theory of Ind-coherent sheaves as developed by Gaitsgory and Rozenblyum \cite{gaitsgory_ind_2011, gaitsgory_study_2017}. Note that the relationship between Ind-coherent sheaves and Ind-Coherent $D$-modules is not so clear: the usual category $\bD(X)$ is already given by $\mathrm{IndCoh(X_{dR})}$ (which is equivalent to $\QC(X_{dR})$ as the de Rham stack of anything is trivially smooth). In some sense the difference between Ind-coherent and usual $D$-modules is measuring the singularities in $T^\ast (Y/K)$ generated by the non-flatness of the moment map.} 
Both $\bD(X)$ and $\bcD(X)$ carry a $t$-structure whose heart is the (same) abelian category $\bM(X)$ of $D$-modules on $X$.
There are adjoint functors $\bD(X) \leftrightarrows \bcD(X)$ which exhibit $\bD(X)$ as a co-localization of $\bcD(X)$, which restrict to an equivalence on the positive part of the $t$-structure.
\begin{remark}
	If $X$ is a variety, or more generally if $X$ is \emph{safe} in the sense of \cite{drinfeld_some_2013}, then $\bcD(X) = \bD(X)$. 
\end{remark}

\begin{example}\label{exampleindcoh}	
Let $X = pt/T$, where $T$ is an algebraic torus, and consider $\Lambda = H_\ast(T) =\Sym( \ft[1])$, and $S=H^\ast(pt/T) = \Sym(\ft^\ast[-2])$. By Example \ref{exampleBG}, $\bD(X)$ is equivalent to $\Lambda-\dgmod$, $\bD_{com}(X) = \Lambda-\Perf$, and $\bD_{coh}(X) = \Lambda-\Coh$. As explained in Example \ref{example-koszul}, Koszul duality defines equivalence $\Lambda-\Coh \simeq S-\Perf$, and thus $\bcD(X) = S-\dgmod$.
\end{example}

\begin{remark}
	When $X$ is a finite orbit stack (for example, a quotient stack $Y/K$ where $K$ acts on $Y$ with finitely many orbits), every coherent complex on $X$ is regular holonomic. Thus, via the Riemann-Hilbert correspondence, $\bcD(X)$ can be identified with the Ind category of $K$-equivariant constructible complexes on $Y$ in the sense of Bernstein--Lunts \cite{bernstein_equivariant_1994}.
\end{remark}

Given a proper (representable) morphism of quotient stacks $f:X\to Y$, the functor $f_\ast$ preserves coherence and thus defines a continuous, quasi-proper functor $\bcD(X) \to \bcD(Y)$ which (by abuse of notation) we still write as $f_\ast$. By the remarks in Section \ref{subsectionstable}, this functor has continuous right adjoint which we denote $f^!$. Similarly, if $f$ is smooth of relative dimension $d$, then $f^!$ preserves coherent complexes. We consider the induced functor $f^\ast = f^![-2d]: \bcD(Y) \to \bcD(X)$ and write $f_\ast$ for the continuous right adjoint of $f^\ast$.

This allows us to define a pair of functors
\[
f_\ast: \bcD(X) \leftrightarrows \bcD(Y): f^!
\]
for every smooth or proper morphism of quotient stacks. One can use this to construct such functors for any representable morphism of quotient stacks by factoring such a morphism as a smooth morphism composed with a proper morphism.

The utility of Ind-Coherent $D$-modules stems from the fact that the following observation: given a  quotient stack $X$ with projection morphism $p:X\to pt$, the functor $p^!:\Vect \to \bD(X)$ does not preserve compact objects, but it does preserve coherent objects. Thus it defines a quasi-proper functor on Ind-coherent $D$-modules; we write $f^\ast = f^![-2\dim X]$, and define $f_\ast$ to be the (continuous) right adjoint of $f^\ast$. Thus a smooth quotient stack behaves just as a smooth algebraic variety from the perspective of Ind-coherent $D$-modules.
%
%
%

\begin{remark}\label{remark-ind-int-trans}
	The Verdier duality functor defines an equivalence of small stable $\infty$-categories $\bD_{coh}(X) \simeq \bD_{coh}(X)^{op}$, and thus, by taking Ind-categories, an equivalence of compactly generated stable presentable $\infty$-categories 
	\[
	\bcD(X) = \Ind \bD_{coh}(X) \simeq \Ind \bD_{coh}(X)^{op} = \bcD(X)'.
	\]
	In other words, the category $\bcD(X)$ is self-dual.
	(Recall that the dual of a compactly generated stable presentable $\infty$-category $\cC$ is given by $\cC':= \Ind (\cC_c^{op})$.)
	This gives rise to a theory of integral transforms for Ind-coherent $D$-modules as in Proposition \ref{prop-inttrans}, except one does not have to consider the renormalized pushforward, as the non-renormalized version is already continuous on Ind-coherent $D$-modules.
\end{remark}

\section{Mackey Theory and Decomposition by Levis}\label{sec:mackey-theory-and-decomposition-by-levis}
Throughout this and remaining sections, we will fix a connected, complex reductive group $G$ with Lie algebra $\fg$. We will use the same notation from \cite{Gunningham2018}, writing $\ug$ for the quotient stack $\fg/G$ (and similarly for other groups). See Section \ref{sec:notation} for an overview of the notation used in this paper.

Functors of parabolic induction and restriction were defined and studied for character sheaves by Lusztig, and a corresponding Mackey formula was proved in \cite[Proposition 15.2]{lusztig_character_1985b}. In \cite{Gunningham2018}, the author studied parabolic induction and restriction functors and proved a Mackey theorem for the abelian category of equivariant $D$-modules on $\fg$. It was also shown that induction and restriction functors induced an orthogonal decomposition of the abelian category $\bM(\ug)$. The goal of this section is to study the corresponding results for the equivariant derived category $\bD(\ug)$. 


\subsection{Parabolic induction and restriction}\label{sec:parabolic-induction-and-restriction}
Fix a parabolic subgroups $P$ with Levi factor $L\subseteq P$. Recall that we have functors: 
	\[
\xymatrixcolsep{3pc}\xymatrix{
	\IND^G_{P,L}=r_\ast s^!: \bD(\ul) \ar@/_0.3pc/[r] &  \ar@/_0.3pc/[l] \bD(\ug):  
	s_\ast r^!=\RES_{P,L}^G
}
\]
given by the diagram
\begin{equation}\label{diagramgs}
	\xymatrix{
		\ug & \ar[l]_r \up \ar[r]^s & \ul.
	}
\end{equation}
Standard properties of $D$-module functors imply that $\RES^G_{P,L}$ is right adjoint to $\IND^G_{P,L}$. We record here the properties enjoyed by these functors.

\begin{theorem}[{\cite[Corollary 3.9, Theorem 3.10]{Gunningham2018}} {\cite[Theorem 5.6]{Bezrukavnikov2021}}]
The functors $\RES^G_{P,L}$ and $\IND^G_{P,L}$ are $t$-exact and preserve coherent complexes.  
\end{theorem}

\subsection{Steinberg Stacks and Functors}\label{sec:steinberg-stacks-and-functors}
Here we recall some definitions and results from \cite{Gunningham2018}, Section 1. We fix two parabolic subgroups $P$ and $Q$ with Levi factors $L$ and $M$ containing a common maximal torus.

The fiber product $\uq \times_{\ug} \up$ will be denoted by $\St QP$ and referred to as the \emph{Steinberg stack}. It is equipped with projections 
	\[
	\xymatrix{
		\um & \ar[l]_\alpha \St QP \ar[r]^\beta & \ul.
	}
	\]
We observe that that the Steinberg stack has two incarnations as a quotient stack:
\begin{align}\label{equationsteinberg1}
\St QP &\simeq\left\{
(x,hP,kQ) \in \fg\times G/P\times G/Q \mid x\in \ls{h}{\fp} \cap \ls{k}{\fq}
\right\}/G \\\label{equationsteinberg2} 
& \simeq 	\left\{ (x,g) \in \fg \times G \mid x\in \fq \cap \ls{g}{\fp} \right\}/(Q\times P)
\end{align}
To see that these two quotient stacks are indeed equivalent, first note that Incarnation (\ref{equationsteinberg1}) is equivalent to 
\[
\left\{
(x,h,k) \in \fg\times G\times G \mid x\in \ls{h}{\fp} \cap \ls{k}{\fq}
\right\}/G\times P\times Q
\]
Then observe that $G$ acts freely on the $k$ variable, which one then eliminates to obtain Incarnation (\ref{equationsteinberg2}).

\begin{remark}The numerator in Incarnation (\ref{equationsteinberg1}) may be referred to as the (big, parabolic) Steinberg \emph{variety}, in contrast to the ``small'' Steinberg variety, which involves a nilpotent element of $\fg$ (or unipotent element of $G$). Parabolic Steinberg varieties were studied in \cite{steinberg_desingularization_1976}\footnote{Note that in \cite[Section 3]{steinberg_desingularization_1976}, the Lie algebra element $x$ is replaced by an element of a fixed unipotent orbit.}) and play a central role in Springer theory. The following results correspond to well-known facts about Steinberg varieties (in particular, see \cite[Proposition 3.3, (a),(b)]{steinberg_desingularization_1976}); however, the stacky point of view (in particular, the passage between the two incarnations (\ref{equationsteinberg1}), (\ref{equationsteinberg2}) above) provides a helpful perspective for understanding these ideas.
\end{remark}

There is a natural morphism of stacks
\[
\St QP \to \quot QGP
\]
given by the projection $(x,g) \mapsto g$ in the notation of Incarnation (\ref{equationsteinberg2}).
There are finitely many orbits of $Q\times P$ on $G$ and the closure relations define a partial order on this set (see \cite{borel_groupes_1965} Chapter 5).\footnote{If we fix a Borel subgroup with a maximal torus $T\subseteq B$, and suppose $Q$ and $P$ are contain $B$ (there is a unique choice of such $Q,P$ inside their conjugacy class), then double cosets $\quot QGP$ are in bijection with $\quot{W_Q}{W}{W_P}$ where $W_P$,$W_Q$ are the parabolic subgroups of $W=W_{G,T}$ corresponding to the standard parabolics $P$ and $Q$. The double cosets $Q\dot{v}P$ (where $\dot{v}\in N_G(T)$ is a lift of $v\in W$) are unions of usual Bruhat cells $B\dot{w}B$ for $w\in W_QvW_P$. See \cite[Corollaire 5.20]{borel_groupes_1965}, or \cite[Proposition 3.3]{steinberg_desingularization_1976}.} Thus we can pull back the orbit partition on $\quot QGP$ to define a locally closed partion of $\St QP$ indexed by the set of double cosets $\quot QGP$. For each orbit $w$ in $\quot QGP$, we denote by $\Stw QPw$ the corresponding subset in $\St QP$. In the notation of Incarnation (\ref{equationsteinberg2}), the stabilizer of the $P\times Q$ action at $(x,g)$ is $Q\cap \ls g P$. Thus for any particular representative $g=\dw$ of $w$, there is an equivalence of stacks:
		\[
		\Stw QPw \simeq (\fq \cap \ls \dw\fp) \adjquot (Q\cap \ls \dw P).
		\]

We define the \emph{Steinberg functor} 
	\[
	\ST = \lrsubsuper{M,Q}{G}{\ST}{P,L}{} := \RES^G_{Q, M} \IND^G_{P,L}: \bcD(\ul) \to \bcD(\um),
	\]
(we will often drop the subscripts and superscripts when the context is clear).
By base change, we have $\ST \simeq   \alpha_\ast \beta^!$, where:
	\[
	\xymatrix{
		&& \St QP \ar[ld] \ar[rd]  \ar@/_2pc/[lldd]_{\alpha} 
		\ar@/^2pc/[rrdd]^{\beta}&& \\
		& \uq  \ar[ld] \ar[rd]  & \Diamond & \up  \ar[ld] \ar[rd]&\\
		\um & & \ug & & \ul.
	}
	\]
By Remark \ref{remark-strat}, the functor $\ST$ has a filtration (in the sense of Definition \ref{definitionfiltration}) indexed by the poset $\quot QGP$ (we will refer to this filtration as the Mackey filtration). The following result identifies the components of the associated graded functor (see \cite{Gunningham2018}, Proposition 1.6 for the notation).

	\begin{proposition}[Mackey Filtration, \cite{Gunningham2018}, Proposition 1.6]\label{propositionmackey}
		For each lift $\dw \in G$ of $w \in \quot QGP$, there is an equivalence of functors
		\[
		\ST^w \simeq \IND _{M\cap \ls \dw P, M \cap \ls \dw L} ^{M} \RES^{\ls \dw L} _{Q\cap \ls \dw L, M \cap \ls \dw L}
		{\dw _\ast}: \bD(\ul) \to \bD(\um).
		\]
	\end{proposition}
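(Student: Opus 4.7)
The plan is to realize both sides as the pull--push functor along a common correspondence, namely the $w$-th stratum $\Stw{Q}{P}{w}$ of the Steinberg stack.

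\textbf{Step 1.} By Remark~\ref{remark-strat} applied to the stratification $\St{Q}{P} = \bigsqcup_{w \in \quot{Q}{G}{P}} \Stw{Q}{P}{w}$, the Mackey filtration on $\ST \simeq \alpha_\ast \beta^!$ has $w$-th associated graded piece
\[
\ST^w \;\simeq\; \alpha_{w,\ast}\,\beta_w^{\,!},
\]
where $\alpha_w,\beta_w$ are the restrictions to the stratum. Fixing the lift $\dw$, one identifies $\Stw{Q}{P}{w} \simeq (\fq \cap \ls{w}{\fp})\adjquot(Q \cap \ls{w}{P})$; under this identification $\alpha_w$ is induced by $\fq \cap \ls{w}{\fp} \hookrightarrow \fq \twoheadrightarrow \fm$, while $\beta_w$ comes from $\fq \cap \ls{w}{\fp} \hookrightarrow \ls{w}{\fp} \twoheadrightarrow \ls{w}{\fl}$ composed with $\Ad(\dw^{-1})$.

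\textbf{Step 2.} I unpack the right-hand side. The three factors $\IND^{M}_{M\cap\ls{w}{P},\,M\cap\ls{w}{L}}$, $\RES^{\ls{w}{L}}_{Q\cap\ls{w}{L},\,M\cap\ls{w}{L}}$, and $\dw_\ast$ are pull--push along the chain
\[
\um \leftarrow \underline{M\cap\ls{w}{P}} \rightarrow \underline{M\cap\ls{w}{L}} \leftarrow \underline{Q\cap\ls{w}{L}} \rightarrow \ls{w}{\ul} \xrightarrow{\dw^{-1}} \ul.
\]
By base change (Proposition~\ref{propositiondmodulefacts}) and composition of integral kernels (Proposition~\ref{prop-inttrans}, Remark~\ref{remark-ind-int-trans}), the composite functor is pull--push along the single correspondence
\[
\um \longleftarrow Z' := \underline{M\cap\ls{w}{P}} \times_{\underline{M\cap\ls{w}{L}}} \underline{Q\cap\ls{w}{L}} \longrightarrow \ul,
\]
with the final twist by $\dw^{-1}$ absorbed into the right leg.

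\textbf{Step 3 (main step).} Finally, I compare the correspondences $\Stw{Q}{P}{w}$ and $Z'$. Using the Levi decomposition
\[
Q \cap \ls{w}{P} \;=\; (M \cap \ls{w}{L}) \ltimes \bigl[(U_Q \cap \ls{w}{L})(M \cap \ls{w}{U_P})(U_Q \cap \ls{w}{U_P})\bigr]
\]
and the analogous decomposition of $\fq \cap \ls{w}{\fp}$, there is a natural map $\pi \colon \Stw{Q}{P}{w} \to Z'$, compatible with the legs to $\um$ and $\ul$. A direct dimension count shows both stacks are zero-dimensional; to conclude that their integral kernels on $\um \times \ul$ coincide, it suffices to check $\pi_\ast \omega_{\Stw{Q}{P}{w}} \simeq \omega_{Z'}$. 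The fibers of $\pi$ are quotients of $\fu_Q \cap \ls{w}{\fu_P}$ by the adjoint action of $U_Q \cap \ls{w}{U_P}$, and the required equivalence of kernels is the standard unipotent-fiber calculation expressing the insensitivity of parabolic induction to the unipotent radical. This last point is the main technical obstacle: $\Stw{Q}{P}{w}$ and $Z'$ are not literally isomorphic because of the ``extra'' factor $U_Q \cap \ls{w}{U_P}$ acting nontrivially, so reconciling them demands careful $D$-module bookkeeping rather than a stack-level identification. Once this is in hand, the equivalence of functors follows by concatenating Steps 1 and 2.
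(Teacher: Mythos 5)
The paper does not prove this statement; it cites Proposition 1.6 of \cite{gunningham_generalized_2015}, so there is no in-paper proof to compare against. Your outline does, however, match the expected strategy for such Mackey formulas: (i) use the stratification of $\St{Q}{P}$ by $\quot{Q}{G}{P}$ and Remark~\ref{remark-strat} to reduce to the pull--push along $\Stw{Q}{P}{w}$, (ii) compose the three integral kernels on the right side to a single correspondence $Z'$ by base change, (iii) exhibit a map $\pi\colon \Stw{Q}{P}{w}\to Z'$ over $\um\times\ul$ with ``unipotent'' fibers and conclude by comparing dualizing complexes. Steps 1 and 2 are routine and correct. The identification of the group-level fiber of $\pi$ with $U_Q\cap\ls{w}{U_P}$ also checks out: the natural map $Q\cap\ls{w}{P}\to (M\cap\ls{w}{P})\times_{M\cap\ls{w}{L}}(Q\cap\ls{w}{L})$ has exactly that kernel.

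The gap is entirely in Step 3, which you yourself flag as ``the main technical obstacle.'' At this level of generality the crucial assertions are stated, not proved: that $\pi$ is well defined and surjective; that its fibers are precisely $(\fu_Q\cap\ls{w}{\fu_P})\adjquot(U_Q\cap\ls{w}{U_P})$ and not something larger; that $\pi_\ast\omega_{\Stw{Q}{P}{w}}\simeq\omega_{Z'}$ with \emph{no} cohomological shift (the vector space and the unipotent group have the same dimension, but this must be said and the contractibility of a $0$-dimensional quotient stack $\V\adjquot U$ with $U$ unipotent must actually be invoked, since $\pi_\ast$ for an affine-space fibration and for a unipotent gerbe shift in opposite directions and the cancellation needs to be verified); and that the twist by $\dw^{-1}$ really is absorbed into the right leg compatibly with the adjoint action of $Q\cap\ls{w}{P}$. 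None of these is hard, but none is supplied, and ``the standard unipotent-fiber calculation'' is a placeholder for the computation rather than the computation itself. So this is an honest and correctly-aimed sketch with the substance of the argument left open at exactly the point where it needs to be done.
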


\subsection{The Recollement by Levis}\label{sec:the-recollement-by-levis}
In \cite{Gunningham2018}, we saw how the functors of parabolic induction and restriction give rise to a recollement situation of the abelian category of $D$-modules. 

\begin{theorem}[{\cite[Theorem B]{Gunningham2018}}]
	\label{theoremabcat}
	Suppose $L$ is a Levi subgroup of $G$ and $P$ a parabolic subgroup of $G$ containing $L$ as a Levi factor. 
	\begin{enumerate}
		\item The functors 
		\[
		\xymatrixcolsep{3pc}\xymatrix{
			\IND^G_{P,L}: \bD(\ul) \ar@/_0.3pc/[r] &  \ar@/_0.3pc/[l] \bD(\ug):  
			\RES_{P,L}^G
		}
		\]
		are $t$-exact.
		\item The restriction of the adjunction to the heart is a bi-adjunction
		\[
		\xymatrixcolsep{3pc}\xymatrix{
			\ind^G_{P,L}: \bM(\ul) \ar@/_0.3pc/[r] &  \ar@/_0.3pc/[l] \bM(\ug):  
			\res_{P,L}^G
		}
		\]
		\item The functors $\ind^G_{P,L}$ and $\res^G_{P,L}$ are independent of the choice of parabolic $P$ containing the given Levi subgroup $L$. 
	\end{enumerate}
\end{theorem}

\begin{corollary}\label{corindep}
	Let $L$ be a Levi subgroup of $G$ and $P,P'$ parabolic subgroups of $G$ containing $L$ as a Levi factor. Then for any $\fM \in \bD(\ug)$, $\RES^G_{P,L}(\fM) \simeq 0$ if and only if $\RES^G_{P',L}(\fM) \simeq 0$. 
\end{corollary}
\begin{proof}
Let $\fM \in \bD(\ug)$, $L$ a Levi subgroup of $G$ and $P$ a parabolic subgroup containing $L$ as a Levi factor. By non-degeneracy of the $t$-structure on $\bD(\ug)$, $\RES^G_{P,L}(\fM)\simeq 0$ if and only if $H^i(\RES^G_{P,L}(\fM))\cong 0$ for all $i\in \Z$. By $t$-exactness of $\RES^G_{P,L}$, $H^i(\RES^G_{P,L}(\fM)) \cong \RES^G_{P,L}(H^i(\fM))$. But the condition that the $\RES^G_{P,L}(H^i(\fM))\cong 0$ is independent of $P$ by Theorem \ref{theoremabcat} as required. 
\end{proof}

\begin{remark}
	Unlike the abelian category situation, it is not clear that there is an isomorphism of functors $\RES^G_{P,L} \simeq \RES^G_{P',L}$ in general (and similarly for induction). 
\end{remark}
\begin{definition}
	Let $L$ be a Levi subgroup of $G$.
	\begin{enumerate}
		\item  We write $\bD(\ug)_{\nleq (L)}$ for the kernel of the parabolic restriction functor; that is, the full subcategory of $\bD(\ug)$ consisting of objects $\fM$ such that $\RES^G_{P,L}(\fM) \simeq 0$ for some parabolic subgroup $P$ of $G$ containing $L$ as a Levi factor. By Corollary \ref{corindep}, this subcategory is independent of the choice of $P$. Thus it depends only on the conjugacy class $(L) \in \Levi_G$ of the Levi subgroup. 
		\item  Similarly, we write $\bD(\ug)_{\nless(L)}$ for the intersection of $\bD(\ug)_{\nleq (L')}$ as $L'$ ranges over Levi subgroups of $G$ with $(L')<(L)$. In other words, objects of $\bD(\ug)_{\nless(L)}$ are killed by parabolic restriction for any Levi $L'$ with $(L')<(L)$.
		\item We write $\bD(\ug)_{cusp}$ for $\bD(\ug)_{\nless(G)}$. Objects of $\bD(\ug)_{cusp}$ are called \emph{cuspidal}. In other words, cuspidal objects are killed by parabolic restriction for any proper Levi subgroup of $G$.
		\end{enumerate}
	\end{definition}

Consider the functor
\[
\xymatrix{
	\RES^G_{P,L}|_{\bD(\ug)_{\nless(L)}}: \bD(\ug)_{\nless (L)} \ar[r]& \bD(\ul)_{cusp}
}
\]
Note that the kernel of $\RES^G_{P,L}|_{\nless(L)}$ is precisely $\bD(\ug)_{\nleq(L)}$ and has left adjoint $\IND^G_{P,L}|_{\bD(\ul)_{cusp}}$. This is precisely the context of Theorem \ref{theoremcoloc}, which yields the following result.
\begin{proposition}\label{propositionrecollement}
	There is a recollement situation:
	\[
	\xymatrixcolsep{5pc}
	\xymatrix{
		\bD(\ug)_{\nleq(L)} \ar[r]|-{\ffi_{\nleq(L)\ast}} & \ar@/_1pc/[l]_-{\ffi_{\nleq (L)}^\ast}\ar@/^1pc/[l]^-{\ffi_{\nleq(L)}^!} \bD(\ug)_{\nless(L)} \ar[d] \ar[r]|-{\fj_{(L)}^\ast} & \ar@/^1pc/[l]^-{\fj_{(L)!}} \ar@/_1pc/[l]_-{\fj_{(L)\ast}} \bD(\ug)_{(L)} \ar@/^1pc/[ld] \\
		& \bD(\ul)_{cusp} &
	}
	\]
\end{proposition}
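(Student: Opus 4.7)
The plan is to deduce the recollement from Theorem \ref{theoremcoloc} applied to the continuous adjunction
\[
\IND^G_{P,L} : \bD(\ul)_{\mathrm{cusp}} \rightleftarrows \bD(\ug)_{\geq(L)} : \RES^G_{P,L},
\]
where $\bD(\ul)_{\mathrm{cusp}} \subseteq \bD(\ul)$ denotes the full subcategory of objects killed by parabolic restriction to every proper Levi of $L$. The real content lies in verifying that induction and restriction preserve the indicated subcategories and that the kernel of restriction, viewed as a functor on $\bD(\ug)_{\geq(L)}$, is precisely $\bD(\ug)_{>(L)}$; the six-functor structure (with fully faithful left and right adjoints $\fj_!$ and $\fj_\ast$) is then produced mechanically by Theorem \ref{theoremcoloc}.

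For the containment $\RES^G_{P,L}\bigl(\bD(\ug)_{\geq(L)}\bigr) \subseteq \bD(\ul)_{\mathrm{cusp}}$ I would use transitivity of parabolic restriction: given a proper Levi $L' \subsetneq L$ with parabolic $P' \subseteq L$ of Levi $L'$, there is a parabolic $P'' \subseteq G$ of Levi $L'$ with $\RES^L_{P',L'} \circ \RES^G_{P,L} \simeq \RES^G_{P'',L'}$. Since $(L') \lneq (L)$ implies $(L') \ngeq (L)$, the right-hand side kills $\bD(\ug)_{\geq(L)}$ by definition, so $\RES^G_{P,L}X$ is cuspidal whenever $X \in \bD(\ug)_{\geq(L)}$.

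For the containment $\IND^G_{P,L}\bigl(\bD(\ul)_{\mathrm{cusp}}\bigr) \subseteq \bD(\ug)_{\geq(L)}$, fix a Levi $M$ with $(M) \ngeq (L)$ and a parabolic $Q$ of Levi $M$. The Mackey filtration (Proposition \ref{propositionmackey}) expresses $\RES^G_{Q,M}\IND^G_{P,L}(Y)$ as an iterated cone built from the associated graded pieces
\[
\IND^{M}_{M \cap \ls{\dw}{P},\, M \cap \ls{\dw}{L}} \; \RES^{\ls{\dw}{L}}_{Q \cap \ls{\dw}{L},\, M \cap \ls{\dw}{L}} \; \dw_\ast Y, \qquad w \in \quot{Q}{G}{P}.
\]
Cuspidality of $Y$ (transported through $\dw_\ast$) forces the inner restriction to vanish unless $Q \cap \ls{\dw}{L} = \ls{\dw}{L}$, i.e.\ $\ls{\dw}{L} \subseteq M$; but this would entail $(M) \geq (L)$, contrary to hypothesis. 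Hence every graded piece is zero, the filtered object vanishes, and $\IND^G_{P,L}(Y) \in \bD(\ug)_{\geq(L)}$.

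With these preservations in hand, $(\IND^G_{P,L}, \RES^G_{P,L})$ restricts to a continuous adjunction on the two subcategories, and an object $X \in \bD(\ug)_{\geq(L)}$ lies in the kernel of $\RES^G_{P,L}$ exactly when we have in addition $\RES^G_{P,L}X = 0$; combined with the defining vanishings, this reads ``$\RES^G_{M}X = 0$ for all $(M) \ngtr (L)$'', which is the definition of $\bD(\ug)_{>(L)}$. Theorem \ref{theoremcoloc} now yields the recollement with $\ffi_\ast$ the inclusion of this kernel and $\fj^\ast$ the localization onto the Verdier quotient, identified with $\bD(\ug)_{(L)}$ by construction. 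The main obstacle I anticipate is the Mackey calculation above, which requires careful bookkeeping to see that cuspidality of $Y$ is correctly transported under conjugation by $\dw$ and that the resulting geometric condition $\ls{\dw}{L} \subseteq M$ translates exactly into the poset inequality $(M) \geq (L)$ on conjugacy classes of Levis in $G$; everything else is formal invocation of Theorem \ref{theoremcoloc} together with results already established in Section \ref{sec:mackey-theory-and-decomposition-by-levis} and \cite{gunningham_generalized_2015}.
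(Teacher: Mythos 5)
Your proof is correct and follows the same route as the paper, which simply cites Theorem \ref{theoremcoloc} without spelling out the underlying adjunction; you have filled in exactly the verifications implicit in that citation (that induction and restriction restrict to the adjunction between $\bD(\ul)_{\mathrm{cusp}}$ and $\bD(\ug)_{\geq(L)}$, and that the kernel of restriction is $\bD(\ug)_{>(L)}$). One small point worth making explicit in the kernel identification: passing from ``$\RES^G_{P,L}X=0$ for the chosen $P$'' to ``$\RES^G_{Q,M}X=0$ for every $(M)=(L)$ and every $Q$'' uses that the vanishing of parabolic restriction depends only on the Levi and not on the parabolic, which is the remark following the definition of the subcategories (equivalently, the singular-support characterization of Theorem~C of \cite{gunningham_generalized_2015}); conjugation alone handles varying $M$ but not varying $Q$ for fixed $L$.
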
 

\begin{remark}
	In the terminology of \cite{Ayala2019}, the category $\bD(\ug)$ is \emph{stratified} by the poset $\Levi_G$. 
\end{remark}

\begin{convention}
	In what follows, we will always identify the subquotient category $\bD(\ug)_{(L)}$ with the full subcategory of $\bD(\ug)$ given by the essential image of $\fj_{(L)!}$, which is the left orthogonal to $\bD(\ug)_{\nleq(L)}$ in $\bD(\ug)_{\nless (L)}$, or equivalently, the cocompletion of the essential image of $\IND^G_{P,L}\mid_{cusp}$.
\end{convention}

Explicitly, Proposition \ref{propositionrecollement} means that we can express any object $\fM \in \bD(\ug)$ as an iterated extension of objects taken from $\bD(\ug)_{(L)}$ as $L$ varies over Levi subgroups of $G$. This can be seen by the following algorithm:
\begin{enumerate}
	\item First choose a Levi subgroup $L$ which is minimal such that $\RES^G_{P,L}(\fM) \nsimeq 0$ (thus $\fM \in \bD(\ug)_{\nless(L)}$). 
	\item We have a distinguished triangle:
	\[
	\ffi_{\nleq(L)\ast} \ffi_{\nleq(L)}^\ast \fM \to \fM \to \fj_{(L)!} \fj_{(L)}^\ast \fM \xrightarrow{+1},
	\]
	\item Replace $\fM$ by $\ffi_{\nleq (L)\ast} \ffi_{\nleq (L)}^\ast(\fM)$ and repeat steps (1) and (2) (the algorithm halts after finitely many steps when $L=G$).
\end{enumerate}

\subsection{The recollement is split}
The functors of induction and restriction for abelian categories induce a recollement at the level of abelian categories, analogous to that in Proposition \ref{propositionrecollement}.
\begin{equation}
	\label{eq:aberecollement}
	\xymatrixcolsep{5pc}
	\xymatrix{
		\bM(\ug)_{\nleq(L)} \ar[r]|-{\ffi_{\nleq(L)\ast}} & \ar@/_1pc/[l]_-{\ffi_{\nleq (L)}^\ast}\ar@/^1pc/[l]^-{\ffi_{\nleq(L)}^!} \bM(\ug)_{\nless(L)} \ar[r]|-{\fj_{(L)}^\ast} & \ar@/^1pc/[l]^-{\fj_{(L)!}} \ar@/_1pc/[l]_-{\fj_{(L)\ast}} \bM(\ug)_{(L)}
	}
\end{equation}
Moreover, all the functors in the recollement above are exact. It follows that the recollement situations at the derived and abelian level are compatible in the strongest sense: all the functors in the derived recollement are $t$-exact, and restrict to the corresponding functors in the abelian recollement. 
The following result is a straightforward consequence of the Mackey formula.
\begin{proposition}[\cite{Gunningham2018} Proposition 4.17]\label{propositionorthogonal}
	Suppose $L$ and $M$ are non-conjugate Levi subgroups, and let $P$ (respectively $Q$) be parabolic subgroups containing $L$ (respectively $M$) as Levi factors. Given $\fN \in \bD_{coh}(\ul)_{cusp}$, and $\fM \in \bD_{coh}(\um)_{cusp}$ we have that $\IND^G_{P,L}(\fN)$ and $\IND^G_{Q,M}(\fM)$ are orthogonal, that is,
	\[
	R\bbHom(\IND^G_{P,L}(\fN), \IND^G_{Q,M}(\fM)) \simeq R\bbHom(\IND^G_{Q,M}(\fM),\IND^G_{P,L}(\fN)) \simeq 0.
	\]
\end{proposition}

\begin{proposition}\label{prop:abcatsplit}
	Suppose $L$ and $M$ are non-conjugate Levi subgroups and let $\fM \in \bM(\ug)_{(L)}$, $\fN \in \bM(\ug)_{(M)}$. Then
	\[
	R\bbHom_{\bD(\ug)}(\fM,\fN) \simeq 0.
	\]
\end{proposition}
\begin{proof}
By \cite[Lemma 4.30]{Gunningham2018}, $\fM$ is a direct summand of $\ind^G_L(\fM')$ for some object $\fM' \in \bM(\ul)_{cusp}$, and similarly for $\fN$. Thus the result follows from Proposition \ref{propositionorthogonal}.
\end{proof}

We can now prove the following key result. 
\begin{theorem}\label{thm: recollement is split}
	The recollement of Proposition \ref{propositionrecollement} is split for each Levi subgroup $L$ of $G$. 
\end{theorem}
\begin{proof}
	Let $L$ be a Levi subgroup of $G$. Let $\fM \in \bD(\ug)_{\nleq (L)}$ and $\fN \in \bD(\ug)_{(L)}$. We must show that
	\[
	R\bbHom(\fM,\fN) \simeq 0.
	\]
	By Lemma \ref{prop:abcatsplit}, the statement is true if $\fM, \fN$ are in the heart of the $t$-structure (note that $\fM$ is a direct sum of objects in $\bM(\ug)_{(L')}$ as $(L')$ ranges over $\nleq(L)$ in $\Levi_G$). It follows that the statement remains true for coherent complexes, which are obtained from those in the heart of the $t$-structure by shifts and cones. In particular, the statement is true for compact objects $\fM$ and $\fN$. 

If $\fM$ and $\fN$ are arbitrary objects as in the statement of the theorem, they can be written as colimits of compact objects: $\fM = \colim \fM_i$ and $\fN = \colim \fN_j$ (note that the inclusion functors $\fj_{(L)!}$ and $\ffi_{\nleq (L)}$ are left adjoints, and thus these colimits may be taken in the ambient category or the respective subcategories). We then have
\begin{align*}
R\bbHom(\fM, \fN) &\simeq R\bbHom(\colim_i \fM_i, \colim_j \fN_j)\\
&\simeq \lim_i R\bbHom(\fM_i, \colim_j \fN_j)\\
&\simeq \lim_i \colim_j R\bbHom(\fM_i, \fN_j)\\
&\simeq 0 
\end{align*}
as required. 
\end{proof}

Putting together these recollements for each $(L) \in \Levi_G$, we see that the stratification of $\bD(\ug)$ indexed by $\Levi_G$ is split: every object $\fM \in \bD(\ug)$ is a direct sum $\bigoplus_{(L) \in \Levi_G} \fM_{(L)}$ where $\fM_{(L)} \in \bD(\ug)_{(L)}$. Thus, we have the following result. 

\begin{corollary}\label{corollaryod}
There is a direct sum decomposition:
\[
\bD(\ug) = \bigoplus_{(L) \in \Levi_G} \bD(\ug)_{(L)}.
\]
\end{corollary}

\section{Generalized Springer Decomposition}\label{sec:generalized-springer-decomposition}
The goal of this section is to study the block decomposition of the subcategory of cuspidal objects. These results will then be combined with those of Section \ref{sec:mackey-theory-and-decomposition-by-levis} to prove Theorem \ref{maintheorem decomp}. The results of this section make essential use of key properties of cuspidal local systems due to Lusztig \cite{lusztig_intersection_1984}; most notably, the fact that cuspidal sheaves have distinct central characters.
	
\subsection{Abelian category decomposition}
A \emph{cuspidal local system} is, by definition, a $G$-equivariant local system $\cE$ on a nilpotent orbit $\cO \subseteq \cN_G$ such that $(\overline{\cO} \hookrightarrow \fg)_\ast IC(\cO;\cE)$ is a cuspidal object in $\bM(\ug)$. A simple cuspidal local system determines a simple object $\fE := (\overline{\cO} \hookrightarrow \cN_G)_\ast IC(\cO;\cE)$ of $\bM(\ucN_G)$ and a full subcategory $\bM(\ug)_{(\cE)}$ consisting of all objects of the form $(\fz(\fg) \times \cN_G) \hookrightarrow \fg)_\ast(\fN \boxtimes \fE)$ where $\fN$ is any object of $\bM(\fz(\fg))$. As shown in \cite[Proposition 4.22]{Gunningham2018}, the subcategory of cuspidal objects $\bM(\ug)_{cusp}$ decomposes as a direct sum of blocks $\bM(\ug)_{(\cE)}$ (the derived analogue of this result is shown in Proposition \ref{propositionodcusp} below).

A \emph{cuspidal datum} for $G$ is a pair $(L,\cE)$, where $L$ is a Levi subgroup of $G$ and $\cE$ is a simple cuspidal local system on a nilpotent orbit of $L$, or equivalently, a simple cuspidal object of $\bM(\ucN_L)$). 

Let $\bM(\ug)_{(L,\cE)}$ denote the subcategory of $\bM(\ug)$ consisting of direct summands of objects of the form $\ind^G_L(\fM)$, where $\fM \in \bM(\fl)_{(\cE)}$. The following is the main result of \cite{Gunningham2018}. 
\begin{theorem}[\cite{Gunningham2018} Theorem A]\label{theoremabelian}
	There is an orthogonal decomposition:
	\[
	\bM(\ug) \simeq \bigoplus_{(L,\cE)} \bM(\ug)_{(L,\cE)},
	\]
	where $(L,\cE)$ ranges over the set of cuspidal data for $G$. Moreover, for each cuspidal datum $(L,\cE)$, there is an equivalence of categories:
	\[
	\bM(\ug)_{(L,\cE)} \simeq \bM(\fz(\fl))^{W_{G,L}}.
	\] 
\end{theorem}

	\subsection{The derived category of cuspidal objects}\label{sec:cusp}
	Let $x\in \cN_G$ be a nilpotent element, and $\cO$ the corresponding nilpotent orbit. As usual, we write $\ucO$ for $\cO/G$, which is equivalent to the classifying stack $pt/Z_G(x)$. We write $Z^\circ(G)$ for the neutral component of the center of $G$. Let $A = A_G(x) = \pi_0(Z_G(x)) = Z_G(x)/Z_G(x)^\circ$ be the equivariant fundamental group of $\cO$. Representations of $A$ (or modules for $\C[A]$) are the same thing as $G$-equivariant local systems on $\cO$. More generally, we have the following description of derived local systems: $\bD(\ucO) \simeq C_\ast(Z_G(x))-\dgmod$ (see Example \ref{exampleBG}). 
	
	Recall that if the orbit $\cO$ supports a cuspidal local system $\cE$, then $\cO$ must be \emph{distinguished}, i.e. $Z^\circ(G)$ is a a maximal connected reductive subgroup of $Z_G(x)$ \cite[Proposition 2.8]{lusztig_intersection_1984} (see also \cite[Proposition 2.6]{achar_modular_2017}). The following lemma describes the derived local systems on a distinguished orbit. We denote by $\Lambda_{\fz(\fg)}$ the free graded-commutative algebra generated by $\fz(\fg)[1]$ (considered as a dg-algebra with zero differential). There is a quasi-isomorphism of dg-algebras:
	\[
	C_\ast(Z^\circ(G)) \simeq \Lambda_{\fz(\fg)}.
	\]
	
		\begin{lemma}
		Suppose $x$ is distinguished. Then there is a quasi-isomorphism of dg-algebras: 
		\[
		C_\ast(Z_G(x)) \simeq \Lambda_{\fz(\fg)} \otimes \C[A].
		\]
	\end{lemma}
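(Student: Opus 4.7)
The plan is to exploit the structure of $Z_G(x)$ as an algebraic group given by the distinguished hypothesis, reducing the computation of $C_\ast(Z_G(x))$ to simple factors whose Pontryagin product is easy to control. The distinguished condition says that $Z^\circ$ is a maximal connected reductive subgroup of $Z_G(x)$, so Mostow's Levi decomposition of the connected algebraic group $Z_G(x)^\circ$ takes the form $Z_G(x)^\circ = Z^\circ \ltimes U$, where $U$ is the unipotent radical. Because $Z^\circ \subseteq Z(G)$ is central in $G$, conjugation by any element of $G$ acts trivially on $Z^\circ$, so the semidirect product is in fact a direct product, $Z_G(x)^\circ \cong Z^\circ \times U$. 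Since $U$ is a unipotent algebraic group over $\C$ it is isomorphic (as a variety) to an affine space, hence contractible; thus the projection $Z_G(x)^\circ \to Z^\circ$ is a homotopy equivalence of topological groups and induces $C_\ast(Z_G(x)^\circ) \simeq C_\ast(Z^\circ) \simeq \Lambda_{\fz}$ as $E_1$-algebras.

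To incorporate the component group, I would choose a set-theoretic section $A \to Z_G(x)$, $a \mapsto \tilde{a}$, yielding a decomposition into connected components $Z_G(x) = \bigsqcup_{a \in A} Z_G(x)^\circ \cdot \tilde{a}$. Translation by $\tilde{a}$ gives a homotopy equivalence of each component with $Z_G(x)^\circ$, so on the level of graded vector spaces we immediately obtain $H_\ast(Z_G(x)) \cong \Lambda_{\fz} \otimes \C[A]$. The real content is then to check that the Pontryagin product respects this tensor decomposition without any twisting. This rests on two observations: first, for any two lifts $\tilde{a},\tilde{b}$ the multiplication sends $(x_1 \tilde{a}, x_2 \tilde{b})$ to $x_1 \cdot c_{\tilde{a}}(x_2) \cdot \tilde{a}\tilde{b}$, and since $c_{\tilde{a}}$ acts trivially on the Levi factor $Z^\circ$ (by centrality) and $U$ contributes trivially to homology, the induced map on $H_\ast(Z_G(x)^\circ)$ is the identity; second, the discrepancy between $\tilde{a}\tilde{b}$ and a fixed lift $\widetilde{ab}$ lives in $Z_G(x)^\circ$, which is path-connected, so it represents the canonical generator $1 \in H_0(Z_G(x)^\circ) = \C$ and produces no correction on homology. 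The $A$-grading of the components is respected by construction, so we conclude $H_\ast(Z_G(x)) \cong \Lambda_{\fz} \otimes \C[A]$ as graded algebras.

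Finally, to upgrade this to a dg-algebra equivalence I would invoke the formality of the two factors: $Z^\circ$ is a complex torus, so $C_\ast(Z^\circ)$ is quasi-isomorphic (as an $E_1$-algebra) to its homology $\Lambda_{\fz}$ with zero differential, while $C_\ast$ of the finite discrete group $A$ is literally the group algebra $\C[A]$. Combined with the calculation above, this gives the required equivalence of dg-algebras $C_\ast(Z_G(x)) \simeq \Lambda_{\fz} \otimes \C[A]$. The main obstacle I anticipate is the bookkeeping in the middle step: one must be careful that the tensor factor $\C[A]$ really carries the undeformed group algebra structure and not a twisted version coming from the potentially non-split extension $1 \to Z_G(x)^\circ \to Z_G(x) \to A \to 1$. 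Centrality of $Z^\circ$ in $G$ is precisely what kills the possible twisting; without it the answer would involve a crossed product rather than a plain tensor product.
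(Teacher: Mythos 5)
Your proof is correct, and it takes a somewhat different route from the paper's. The paper first invokes Jacobson--Morozov to choose an $\fsl_2$-triple $\phi$ with $\phi(e)=x$ and passes to the subgroup $Z_G(\phi)$, which is a maximal reductive subgroup of $Z_G(x)$; the inclusion $Z_G(\phi)\hookrightarrow Z_G(x)$ is a homotopy equivalence, and in the distinguished case $Z_G(\phi)$ is \emph{literally} a central extension of $A$ by the torus $Z^\circ$, classified by a cocycle $\mu\colon A\times A\to Z^\circ$; the untwisting of the Pontryagin product then follows at once because $\mu$ takes values in a connected group, so $\mu_\ast$ is trivial in positive homological degree. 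You instead apply the Levi--Mostow decomposition directly to $Z_G(x)^\circ$, identify the Levi factor with $Z^\circ$ via the distinguished hypothesis and centrality, and analyze the Pontryagin product on $H_\ast(Z_G(x))$ component-by-component over $A$. This avoids $\fsl_2$-theory but forces you to verify in addition that conjugation by each lift $\tilde a$ acts trivially on the homology of the full connected component $Z_G(x)^\circ$, not merely on $Z^\circ$; your observation that $c_{\tilde a}$ fixes $Z^\circ$ pointwise and preserves the homologically invisible unipotent radical handles this correctly. Both arguments in the end come down to the same two facts---centrality of $Z^\circ$ kills the conjugation action, connectedness of $Z_G(x)^\circ$ kills the cocycle---and both you and the paper are somewhat informal at the last step, where the computation of $H_\ast$ as a graded algebra is promoted to a quasi-isomorphism of dg-algebras; this is not a defect peculiar to your write-up.
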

	\begin{proof}
		Recall that by Jacobson-Morazov theory, there is an $\fsl_2$-triple, $\phi:\fsl_2 \to \fg$ such that $\phi(e) = x$ (where $e,f,h$ are the standard basis for $\fsl_2$). Moreover, we have extensions
		\[
		\xymatrix{
			Z_G(x)^\circ \ar[r] & Z_G(x) \ar[r] & A\\
			Z_G(\phi)^\circ \ar[u] \ar[r] & Z_G(\phi)	\ar[u] \ar[r] & A \ar[u]
		}
		\]
		where $Z_G(\phi)^\circ$ is the maximal reductive subgroup of $Z_G(x)^\circ$. In particular, the vertical maps are all homotopy equivalences, and thus $C_\ast(Z_G(x)) \simeq C_\ast(Z_G(\phi))$. As $x$ is distinguished, $Z_G(\phi)^\circ = Z^\circ(G)$, so the lower line is in fact a central extension of the finite group $A_G(x)$ by the torus $Z^\circ(G)$. As such, it is determined by a cocycle 
		\[
		\mu:A\times A \to Z^\circ
		\]
		Explicitly, $Z_G(\phi)$ is isomorphic to an algebraic group, whose underlying variety is $Z^\circ(G) \times A$, but with group structure twisted by the cocycle:
		\[
		(z_1,a_1)\cdot_{\mu}(z_2,a_2) = (z_1z_2\mu(a_1,a_2),a_1a_2).
		\]
		By the K\"unneth theorem, $C_\ast(Z_G(\phi))$ is equivalent as a chain complex to $\Lambda_{\fz(\fg)} \otimes \C[A]$. The convolution operation is determined by the induced map 
		\[
		\mu_\ast: \C[A]\otimes \C[A] \to \Lambda_{\fz(\fg)}
		\]
		which is necessarily trivial, as $Z^\circ(G)$ is connected and thus the induced map at the level of $H_0$ is trivial. This gives the required quasi-isomorphism.
	\end{proof}
Thus we have:
\[
\bD(\ucO) \simeq \Rep(A) \otimes \Lambda_{\fz(\fg)}-\dgmod
\]
It follows that the category of $D$-modules on $\ucO$ decomposes over the the set $\widehat{A}$ of irreducible representations of $A$.
\begin{lemma}\label{lemmafinitegp}
	There is an orthogonal decomposition:
\[
\bD(\ucO) \simeq \bigoplus_{i \in \widehat{A}}^\perp \Lambda_{\fz(\fg)}-\dgmod
\]
\end{lemma}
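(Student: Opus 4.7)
The plan is to leverage the established equivalence
\[
\bD(\ucO) \simeq \Rep(A) \otimes \Lambda_\fz\text{-}\dgmod
\]
and decompose the first tensor factor using classical representation theory of the finite group $A$, then distribute the tensor product over the direct sum.

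First I would recall that, since $A$ is a finite group and we work over $\C$, the category $\Rep(A)$ is semisimple (Maschke), and Schur's lemma gives an orthogonal decomposition
\[
\Rep(A) \simeq \bigoplus_{i\in \widehat{A}}^{\perp} \Vect,
\]
where the $i$-th summand is the full subcategory generated by the simple $A$-module corresponding to $i$, with endomorphism algebra $\C$. Orthogonality of distinct irreducibles gives the $\perp$.

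Next I would invoke the fact that the monoidal product $\otimes$ on $\DGCat$ preserves colimits in each variable, and in particular commutes with (small) orthogonal direct sums of compactly generated categories. Tensoring with $\Lambda_\fz\text{-}\dgmod$ therefore yields
\[
\Rep(A)\otimes \Lambda_\fz\text{-}\dgmod \;\simeq\; \bigoplus_{i\in \widehat{A}}^{\perp}\bigl(\Vect \otimes \Lambda_\fz\text{-}\dgmod\bigr) \;\simeq\; \bigoplus_{i\in \widehat{A}}^{\perp} \Lambda_\fz\text{-}\dgmod,
\]
using that $\Vect$ is the monoidal unit in $\DGCat$.

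There is essentially no hard step here; the only thing to be a bit careful about is that the decomposition of $\Rep(A)$ really is orthogonal in the stable $\infty$-categorical sense (not merely as abelian categories). This follows from the computation $R\bbHom_A(V_i,V_j) = \bbHom_A(V_i,V_j)$ concentrated in degree $0$, which holds because $\Rep(A)$ is semisimple so all higher $\Ext$-groups vanish, and then Schur's lemma gives the claim. Passing this orthogonality through the tensor product is automatic since $\Lambda_\fz\text{-}\dgmod$ is dualizable.
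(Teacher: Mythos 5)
Your argument is correct and is essentially the paper's (implicit) argument: the paper states the equivalence $\bD(\ucO)\simeq \Rep(A)\otimes \Lambda_\fz\text{-}\dgmod$ and then asserts the lemma with no further proof, so the intended reasoning is exactly what you spell out — decompose $\Rep(A)$ orthogonally into $\bigoplus_{\widehat A}\Vect$ by Maschke/Schur and distribute the monoidal product in $\DGCat$ over the direct sum. Your extra care about higher $\Ext$'s vanishing and $\otimes$ preserving colimits in each variable is appropriate and fills in the detail the paper elides.
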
 
By Example \ref{exampleindcoh}, we have the corresponding result for Ind-coherent $D$-modules:
\[
\bcD(\ucO) \simeq \bigoplus_{i \in \widehat{A}} S_{\fz(\fg)^\ast}-\dgmod
\]
where $S_{\fz(\fg)^\ast} = \Sym(\fz(\fg)^\ast[-2])$.
 
\subsection{Cleanness of cuspidal local systems}
Given an equivariant local system $\cE$ on $\cO$, we say $\cE$ \emph{has a clean extension to $G$} (or simply, \emph{is clean}) if the canonical maps $j_!(\cE) \to j_{!\ast}(\cE) \to j_!(\cE)$ are equivalences, where $j: \ucO \hookrightarrow \ug$ is the (locally closed) inclusion. 

Lusztig has shown that all cuspidal local systems are clean \cite[23.1]{lusztig_character_1986} using the fact that non-isomorphic cuspidal sheaves have distinct central characters (which follows from the case-by-case classification of cuspidal sheaves in \cite{lusztig_intersection_1984}). We were unable to give an independent proof of this fact using the results we have proved so far.
The orthogonal decomposition by Levis, does at least give us this result (also found in \cite{rider_perverse_2016}, Proposition 4.2):
\begin{proposition}\label{prop-cleanness}
	The following are equivalent:
	\begin{enumerate}
		\item All simple cuspidal local systems are clean.
		\item Any two non-isomorphic simple cuspidal objects of $\bM(\ucN_G)$ are orthogonal in $\bD(\ug)$.
	\end{enumerate}
\end{proposition}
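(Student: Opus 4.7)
The plan is to prove both implications separately. The forward direction $(1)\Rightarrow(2)$ is a short adjunction calculation using cleanness; the reverse direction $(2)\Rightarrow(1)$ requires upgrading the orthogonality hypothesis into a refinement of the block decomposition, and is where the main technical obstacle lies.

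For $(1)\Rightarrow(2)$, I would fix two non-isomorphic simple cuspidal objects $\cF_i = j_{i!\ast}\cE_i$ of $\bM(\ucN_G)$, supported on orbits $\cO_1,\cO_2$ with locally closed inclusions $j_i\colon \ucO_i\hookrightarrow \ug$. Cleanness gives $\cF_i\simeq j_{i!}\cE_i \simeq j_{i\ast}\cE_i$, so each $\cF_i$ is genuinely supported on $\ucO_i$ rather than its closure, even in the derived sense. Then
\[
R\bbHom(\cF_1,\cF_2) \simeq R\bbHom(j_{1!}\cE_1,\, j_{2\ast}\cE_2) \simeq R\bbHom_{\ucO_1}(\cE_1,\, j_1^! j_{2\ast}\cE_2),
\]
which vanishes by disjoint support when $\cO_1\neq\cO_2$, and reduces to $R\bbHom_{\ucO}(\cE_1,\cE_2)$ when $\cO_1=\cO_2=\cO$. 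This last vanishes by Lemma \ref{lemmafinitegp}, since non-isomorphic simple $G$-equivariant local systems on $\cO$ correspond to distinct irreducibles of the component group $A_G(x)$ and lie in orthogonal summands of $\bD(\ucO)$.

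For $(2)\Rightarrow(1)$, I would fix a simple cuspidal $\cE$ on $\cO$, set $\cF = j_{!\ast}\cE$, and write $K$ for the cone of the natural map $j_!\cE\to\cF$; by Verdier duality it suffices to show $K=0$. The plan is to combine (2) with the argument of Proposition \ref{prop-decomp-by-levis} to produce a finer orthogonal decomposition
\[
\bD(\ug)_{(G)} \simeq \bigoplus^\perp_{\cE'} \bD(\ug)_{(G,\cE')},
\]
where $\bD(\ug)_{(G,\cE')}$ is the subcategory generated under colimits by $j_{!\ast}\cE'$. Theorem \ref{theoremabelian} identifies the abelian heart $\bM(\ug)_{(G,\cE)}$ with $\bM(\fz(\fg))$, and under this equivalence the only simple perverse sheaves supported on $\cN_G$ are the translates of $\cF$ by $\fz(\fg)$; hence any object of $\bD(\ug)_{(G,\cE)}$ supported on $\partial\cO$ must vanish. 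If we can place $K$ inside $\bD(\ug)_{(G,\cE)}$, then $K=0$ and cleanness follows.

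The main obstacle, and the step I expect to be hardest, is verifying that $j_!\cE$ itself lies in $\bD(\ug)_{(G,\cE)}$. A priori the functor $j_!$ need not respect the Levi block decomposition. The natural attack is to compute $\RES^G_{P,L}(j_!\cE)\simeq s_\ast \tilde j_!(\tilde r^!\cE)$ by base change along the Steinberg-style diagram of Section \ref{sec:steinberg-stacks-and-functors} and deduce its vanishing for every proper parabolic $P$ from the cuspidality of $\cE$. A successful calculation here would yield cleanness as a byproduct---precisely Lusztig's result. The author's comment that no independent structural proof is available suggests that establishing cuspidality of $j_!\cE$ collapses to essentially the same case-by-case input as Lusztig's distinct-central-character argument.
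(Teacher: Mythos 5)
Your forward implication $(1)\Rightarrow(2)$ is correct and is essentially the paper's argument: both reduce via adjunction (you transpose along $(j_1^!, j_{1\ast})$, the paper along $(j_2^\ast, j_{2\ast})$, which amounts to the same thing) to the disjoint-support case and then to Lemma \ref{lemmafinitegp} for the equal-orbit case.

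The reverse implication $(2)\Rightarrow(1)$ is where you and the paper part ways, and where your proposal does not close. You set up a direct attack: build a finer orthogonal decomposition of $\bD(\ug)_{(G)}$ by cuspidal data, show the cone $K$ of $j_!\cE\to j_{!\ast}\cE$ lands in the $\cE$-block, and conclude $K=0$ from support considerations. As you candidly observe, the decisive step --- placing $j_!\cE$ (equivalently $K$) inside $\bD(\ug)_{(G,\cE)}$ --- amounts to showing that $j_!\cE$ is cuspidal, and there is no reason for $j_!$ of a local system to respect the Levi decomposition a priori. Worse, a successful proof of that step would already \emph{be} Lusztig's cleanness theorem, so your reduction of $(2)\Rightarrow(1)$ never actually leverages hypothesis $(2)$ at the point of difficulty; it would prove $(1)$ outright. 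That is the genuine gap.

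The paper's proof of $(2)\Rightarrow(1)$ goes the other way round. It argues by contrapositive: suppose some simple cuspidal $\cE$ on $\cO$ fails to be clean. Then $\cF := i^\ast j_\ast\cE$ is nonzero for some strictly smaller orbit $i\colon\cO'\hookrightarrow\cN_G$, and by adjunction $R\bbHom(j_\ast\cE,\, i_\ast\cF)\simeq R\End(\cF)\nsimeq 0$. Now decompose $\cF$ by the \emph{already established, unconditional} Levi decomposition of Proposition \ref{prop-decomp-by-levis}; the nonvanishing Hom forces a nonzero cuspidal summand $\cF_{(G)}$, and a simple constituent of that summand yields a simple cuspidal local system on $\cO'$ not orthogonal to $\cE$, contradicting $(2)$. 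The point is that the paper never needs to know $j_!\cE$ or $j_\ast\cE$ is cuspidal; it only needs the Levi decomposition to extract a cuspidal \emph{witness} from the nonzero boundary restriction. This is precisely what lets the equivalence be stated without secretly assuming Lusztig's theorem. I would recommend replacing your direct attack with this contrapositive argument.
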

\begin{proof}
	Suppose that all cuspidal local systems are clean, and let $\cE_1, \cE_2$ be nonisomorphic simple cuspidal local systems supported on nilpotent orbits $\cO_1,\cO_2$ respectively. We will denote by $\fE_1, \fE_2$ the clean extension to (simple) objects of $\bM(\ug)$. We have:
	\[
	R\bbHom(\fE_1, \fE_2) \simeq R\bbHom_{\bD(\ug)}(j_{1!} \cE_1, j_{2\ast} \cE_2) \simeq R\bbHom_{\bD(
		\ucO_2)}(j_2^\ast j_{1!} \cE_1,\cE_2).
	\]
	If $\cO_1 \neq \cO_2$, then $j_2^\ast j_{1!} \cE_1 \simeq 0$; if $\cO_1=\cO_2$, then $j_2^\ast j_{1!} \cE_1 \simeq \cE_1$, and the right hand side again vanishes by Lemma \ref{lemmafinitegp}.
	
	Now suppose that any two simple cuspidal objects of $\bM(\ucN_G)$ are orthogonal in the derived category. Let $\cE$ be a simple cuspidal local system on a nilpotent orbit $j:\cO \hookrightarrow \cN_G$. If $\cE$ were not clean, then $i^\ast j_\ast \cE \nsimeq 0$ for some other nilpotent orbit $i: \cO' \hookrightarrow \cN_G$ in the closure of $\cO$. Thus we have that 
	\[
	R\bbHom(j_\ast \cE_1,i_\ast \cF) \nsimeq 0,
	\]
	where $\cF = i^\ast j_\ast \cE \in \bD(\ucO')$. Now consider $i_\ast \cF \in \bD(\ucN_G)$ (which we identify as a subcategory of $\bD(\ug)$ as usual). Consider the decomposition $i_\ast\cF = \bigoplus (i_\ast\cF)_{(L)}$ given by Theorem \ref{thm: recollement is split}. But there is a non-zero map 
	\[
	j_\ast\cE \to i_\ast(\cF)= \bigoplus (i_\ast \cF)_{(L)}.
	\]
	As $j_\ast\cE$ is cuspidal (i.e. in the subcategory $\bD(\ug)_{(G)}$), and the decomposition is orthogonal, we see that the cuspidal summand $(i_\ast\cF)_{(G)}$ must be non-zero and is not orthogonal to $j_\ast\cE$. This contradicts the assumption that any two simple cuspidal objects are orthogonal. Thus $\cE$ must be clean, as required.
\end{proof}
	
	\subsection{Cuspidal Blocks}
From this point on we will freely use the fact that cuspidal local systems are clean. 
	
	Suppose $\cE$ is a simple cuspidal local system on $\cO$, corresponding to a certain irreducible representation of $A$. Let $\bD(\ucO)_{\cE} \simeq \Lambda_{\fz(\fg)}-\dgmod$ denote the block of $\bD(\ucO)$ corresponding to $\cE$. Let $\bD(\fz(\fg)\times \ucO)_{(\cE)}$ denote the corresponding subcategory  of $\bD(\fz(\fg) \times \ucO)$. Note that $\ug \simeq \fz(\fg) \times \fg'/G$, where $\fg' = [\fg,\fg]$.
	Consider the map: 
	\[
	k:\fz(\fg) \times \ucO \hookrightarrow \ug
	\]
	
	\begin{lemma}\label{lemma-ff}
	The functor $k_\ast: \bD(\ucO \times \fz(\fg))_{\cE} \to \bD(\ug)$ is fully faithful.
	\end{lemma}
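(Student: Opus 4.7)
The plan is to show directly that the unit $\mathrm{id} \to k^! k_\ast$ of the adjunction $(k_\ast, k^!)$ is an equivalence when restricted to $\bD(\fz \times \ucO)_{(\cE)}$, which will imply that $k_\ast$ is fully faithful on this subcategory. Using the Levi-type decomposition $\fg = \fz(\fg) \oplus [\fg,\fg]$, we have $\ug \simeq \fz \times \ug'$ where $\ug' = [\fg,\fg]/G$; the embedding $k$ then factors as the external product $\mathrm{id}_\fz \times j$, with $j: \ucO \hookrightarrow \ug'$ the natural locally closed immersion (factoring through $\ucN_G$). By the external-product formalism for $D$-module functors (i.e.\ $(\mathrm{id} \times j)_\ast \simeq \mathrm{id} \boxtimes j_\ast$ and $(\mathrm{id} \times j)^! \simeq \mathrm{id} \boxtimes j^!$), one obtains a canonical equivalence $k^! k_\ast \simeq \mathrm{id}_\fz \boxtimes (j^! j_\ast)$.

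It therefore suffices to show that the unit $\mathrm{id} \to j^! j_\ast$ is an equivalence on the block $\bD(\ucO)_{(\cE)}$. By Lemma \ref{lemmafinitegp} this block is equivalent to $\Lambda_{\fz(\fg)}-\dgmod$ and is generated under colimits by the single object $\cE$ (the augmentation module). Since both $j_\ast$ (for the representable morphism $j$) and $j^!$ are continuous, it is enough to check the equivalence on $\cE$ itself.

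The cleanness of cuspidal local systems, established by Lusztig and reformulated as Proposition \ref{prop-cleanness}, gives a canonical equivalence $j_!\cE \isom j_\ast\cE$. Hence $j^! j_\ast \cE \simeq j^! j_! \cE$, and this is canonically identified with $\cE$ via the standard unit isomorphism $\mathrm{id} \isom j^! j_!$ valid for any locally closed immersion of quotient stacks. Composing these equivalences yields the desired identification on generators, and continuity propagates it through the rest of the block.

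The main obstacle is to carefully justify the Künneth-type decomposition of the composite $k^! k_\ast$ in the setting of equivariant Ind-coherent $D$-modules on quotient stacks. If one prefers to sidestep a general external-product formula, one may instead factor $j$ as an open immersion of $\ucO$ into $\ucN_G$ followed by the closed immersion $\ucN_G \hookrightarrow \ug'$, handle each factor using the standard six-functor base change and support identities, and assemble the required identification $k^! k_\ast(\fN \boxtimes \cE) \simeq \fN \boxtimes \cE$ object-by-object on the generators. The conceptual content is the same: all the nontrivial input is the cleanness theorem, and everything else is formal six-functor manipulation.
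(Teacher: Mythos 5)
Your proof follows the same underlying idea as the paper (the one-line proof in the text also just cites cleanness), so you have correctly identified the intended input. A couple of remarks.

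First, a small but real imprecision in your framing: for a locally closed immersion $k$ that is not proper (as here), there is no adjunction $(k_\ast, k^!)$, so it is not quite right to speak of "the unit of the adjunction $(k_\ast, k^!)$." The adjoint pairs available are $(k^\ast, k_\ast)$ and $(k_!, k^!)$. Your argument actually uses the latter: cleanness supplies the identification $k_\ast \simeq k_!$ on the block, and then full faithfulness of $k_\ast$ is read off from full faithfulness of $k_!$, i.e.\ from the unit equivalence $\mathrm{id} \isom k^! k_!$. Rephrased that way (apply cleanness \emph{first} to pass from $k_\ast$ to $k_!$, then invoke $\mathrm{id} \isom k^! k_!$), the argument is clean and correct; as written it starts from an adjunction that does not exist.

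Second, it is worth pointing out that the bare statement of the lemma is in fact true \emph{unconditionally}, with no appeal to cleanness at all. For any locally closed immersion $k$ of (quotient) stacks one can factor $k = \nu \circ \iota$ with $\iota$ a closed immersion into an open substack and $\nu$ the open immersion, whence
\[
k^\ast k_\ast \;\simeq\; \iota^\ast \nu^\ast \nu_\ast \iota_\ast \;\simeq\; \iota^\ast \iota_\ast \;\simeq\; \mathrm{id},
\]
using the counit isomorphism for the open immersion and Kashiwara's theorem for the closed one. Since $k^\ast \dashv k_\ast$, this counit isomorphism is exactly the statement that $k_\ast$ is fully faithful, on all of $\bD(\ucO\times\fz(\fg))$ and hence on the block. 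So the cleanness theorem, while genuinely the key input for the surrounding results (the orthogonal decomposition in Proposition~\ref{propositionodcusp}, the identification of the essential image, the orthogonality of distinct cuspidal local systems), is not strictly needed for the full-faithfulness claim of this particular lemma. Your proof is not wrong, but it routes an easy statement through a deep theorem; the shorter argument above makes the logical dependencies clearer.
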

\begin{proof}
This follows immediately from the fact that objects of $\bD(\ucO \times \fz(\fg))_{(\cE)}$ have clean extensions to $\ug$.
\end{proof}	

Denote the essential image of $k_\ast: \bD(\ucO \times \fz(\fg))_{\cE} \to \bD(\ug)$ by $\bD(\fg)_{(\cE)}$. In other words, these objects are of the form $\cE \boxtimes \fM$, where $\fM \in \bD(\fz(\fg))$ (and we identify $\cE$ with its clean extension to $\ucN_G$). We have the following description of the category of cuspidal objects in $\bD(\ug)$.
	\begin{proposition}\label{propositionodcusp}
		We have an orthogonal decomposition 
		\[
		\bD(\ug)_{cusp} \simeq \bigoplus_{(\cE)} \bD(\ug)_{(\cE)}.
		\]
		Moreover, for each cuspidal local system $\cE$, there is an equivalence of categories:
		\[
		\bD(\ug)_{(\cE)} \simeq \bD(\fz(\fg)/Z^\circ(G)) \left(\simeq \fD_{\fz(\fg)}\otimes \Lambda_{\fz(\fg)}-\dgmod\right).
		\] 
	\end{proposition}
\begin{proof}
By Lemma \ref{lemma-ff}, the category $\bD(\ug)_{cusp}$ decomposes as a direct sum indexed by distinguished orbits, where each summand is equivalent to the subcategory generated by cuspidal objects in $\bD(\ucO\times \fz(\fg))$. The result then follows from Lemma \ref{lemmafinitegp}.
\end{proof}

\begin{remark}
The subcategory $\bD_{coh}(\fg)_{(\cE)}$ consists precisely of those objects with finitely many nonzero cohomology modules, all of which are contained in $\bM_{coh}(\fg)_{(\cE)}$.
\end{remark}

	\subsection{Proof of Theorem \ref{maintheorem decomp}}
	We have already shown in Corollary \ref{corollaryod} that there is an orthogonal decomposition:
	\[
    \bD(\ug) = \bigoplus_{(L)} \bD(\ug)_{(L)}.
    \]
	Now let us fix a Levi subgroup $L$. Applying Proposition \ref{propositionodcusp} (with $L$ in place of $G$) we obtain another orthogonal decomposition:
	\[
	\bD(\ul)_{cusp} = \bigoplus_{(\cE)} \bD(\ul)_{(\cE)}.
	\]
	Let $\bD(\ug)_{(L,\cE)}$ denote the subcategory of $\bD(\ug)_{(L)}$ consisting of complexes $\fM$ such that $\RES^G_{(P,L)}(\fM) \in \bD(\ul)_{(\cE)}$.
	
	The following lemma is a consequence of the fact that the relative Weyl group acts trivially on the set of isomorphism classes of simple cuspidal objects (see \cite[Theorem 9.2 (b)]{lusztig_intersection_1984}).
	\begin{lemma}\label{lemma-adjunction-E}
The functors of parabolic induction and restriction restrict to form a monadic adjunction:
	\[
	\xymatrixcolsep{5pc}
	\xymatrix{
		\IND^G_{P,L}: \bD(\ul)_{(\cE)} \ar@/_0.3pc/[r] &  \ar@/_0.3pc/[l] \bD(\ug)_{(L,\cE)}:  
		\RES_{P,L}^G
	} 
	\]
	\end{lemma}
\begin{proof}
First note that the adjunction
\[
\xymatrixcolsep{5pc}
\xymatrix{
	\IND^G_{P,L}: \bD(\ul)_{cusp} \ar@/_0.3pc/[r] &  \ar@/_0.3pc/[l] \bD(\ug)_{(L)}:  
	\RES_{P,L}^G
} 
\]
is monadic by the general setup described in Theorem \ref{theoremcoloc}.
It remains to check that the restriction of $\IND^G_{P,L}$ to $\bD(\ul)_{(\cE)}$ has essential image contained in $\bD(\ug)_{(L,\cE)}$. In other words,we must check that for $\fN \in \bD(\ul)_{(\cE)}$, we have 
\[\lrsubsuper{P,L}{G}{\ST}{P,L}{}(\fN) :=\RES^G_{P,L}\IND^G_{P,L}(\fN) \in \bD(\ul)_{(\cE)}
\]
By the Mackey formula (Proposition \ref{propositionmackey}), $\lrsubsuper{M,Q}{G}{\ST}{P,L}{}(\fN)$ is an iterated extension of objects $w_\ast(\fN)$, as $w$ ranges over the relative Weyl group $W_{G,L}$ (here we use crucially that $\fN$ is cuspidal). 
It follows from \cite[Theorem 9.2 (b)]{lusztig_intersection_1984} that we have $w_\ast(\cE)\simeq \cE$ for all $w\in W_{G,L}$ . Thus $w_\ast(\fN) \in \bD(\ul)_{(\cE)}$ for all $w\in W_{G,L}$, and hence $\lrsubsuper{M,Q}{G}{\ST}{P,L}{}(\fN)\in \bD(\ul)_{(\cE)}$ as required.
	\end{proof}

\begin{lemma}\label{lemma-od-(L,E)}
	The subcategories $\bD(\ug)_{(L,\cE)}$ are orthogonal for nonisomorphic simple cuspidal local systems $\cE$ for $L$.
\end{lemma}
\begin{proof}
	Given two non-isomorphic simple cuspidal local systems $\cE_1, \cE_2$ for $L$, let $\fM_1 \in \bD(\ug)_{(L,\cE_1)}$ and $\fM_2 \in \bD(\ug){(L,\cE_2)}$. We must show that $R\bbHom_{\bD(\ug)}(\fM_1,\fM_2) \simeq 0$. First we note that, by Lemma \ref{lemma-adjunction-E}, $\fM_1$ can be written as a colimit of objects of the form $\IND^G_{P,L}(\fN_1)$ where $\fN_1 \in \bD(\ul)_{(\cE_1)}$. Thus it suffices to show that $R\bbHom_{\bD(\ug)}(\IND^G_{P,L}(\fN_1),\fM_2)\simeq 0$ for any object $\fN_1 \in \bD(\ul)_{(\cE_1)}$. Next
	we note that $\RES^G_{P,L}(\fM_2) \in \bD(\ul)_{(\cE_2)}$ (again by Lemma \ref{lemma-adjunction-E}), which is orthogonal to $\bD(\ul)_{(\cE_1)}$ by the assumption that $\cE_1$ is not isomorphic to $\cE_2$.
	It follows that
	\[
	R\bbHom_{\bD(\ug)}(\IND^G_{P,L}(\fN_1),\fM_2) \simeq 	R\bbHom_{\bD(\ug)}(\fN_1,\RES^G_{P,L}(\fM_2)) \simeq 0,
	\]
	for any object $\fN_1 \in \bD(\ul)_{(\cE_1)}$, as required. 
\end{proof}

Now, for each simple cuspidal local system $\cE$ for $L$, we may consider the composite of parabolic restriction $\RES^G_{P,L}$ followed by projection on to the factor $\bD(\ul)_{(\cE)}$. This establishes a recollement situation as in Section \ref{sec:the-recollement-by-levis}, where the corresponding quotient category is identified with $\bD(\ug)_{(L,\cE)}$. By Lemma \ref{lemma-od-(L,E)}, the subcategories $\bD(\ug)_{(L,\cE)}$ are orthogonal, giving an orthogonal decomposition of $\bD(\ug)_{(L)}$. Putting this all together, we obtain the orthogonal decomposition:
\[
\bD(\ug) = \bigoplus_{(L,\cE)} \bD(\ug)_{(L,\cE)}.
\]
This concludes the proof of Theorem \ref{maintheorem decomp}.	
	\section{The Steinberg Monad}\label{sec:the-steinberg-monad}
In this section we study the blocks $\bD(\ug)_{(L,\cE)}$ via parabolic restriction, and the corresponding Mackey filtration on the monad. Conceptually, the Steinberg monads appearing in this paper and in \cite{Gunningham2018} replace the endomorphism (or $\Ext$) algebra of the Springer sheaf or equivalently the homology of the Steinberg variety appearing in previous work on (generalized) Springer theory (e.g. \cite{borho_representations_1981,chriss_representation_1997, lusztig_intersection_1984}). In particular, we will prove Theorem \ref{maintheorem monad} and Theorem \ref{maintheorem dg algebra}.
	
	\subsection{Monadic description of the blocks}
		Fix cuspidal data $(L,\cE)$, and let $\uz(\fl) = \fz(\fl)/Z^\circ(L) \simeq \fz(\fl) \times pt/Z^\circ(L)$. By Lemma \ref{lemma-adjunction-E} the functors of parabolic induction and restriction restrict to form a monadic adjunction:
	\[
	\xymatrixcolsep{5pc}
	\xymatrix{
		\IND^G_{P,L}: \bD(\ul)_{(\cE)} \ar@/_0.3pc/[r] &  \ar@/_0.3pc/[l] \bD(\ug)_{(L,\cE)}:  
		\RES_{P,L}^G
	} 
	\]
	According to Proposition \ref{propositionodcusp}, $\bD(\ul)_{(\cE)}$ is equivalent to $\bD(\uz(\fl))$. Let us denote by $\ST = \ST_{\cE}$ the corresponding monad acting on $\bD(\uz(\fl))$ under this equivalence. Thus, by the Barr-Beck-Lurie Theorem \ref{theorembarrbeck} there is an equivalence 
	\[
	\bD(\ug)_{(L,\cE)} \xrightarrow{\sim} \bD(\uz(\fl))^{\ST}.
	\]
	This establishes part \ref{part monad} of Theorem \ref{maintheorem monad}. 
	
	\subsection{Double cosets and the relative Weyl group}\label{remark double coset}
	In this section we will recall the relationship between the relative Weyl group $W_{G,L}$ and the poset $\quot PGP$.
	
	Along let us fix a pair $P,L$ of a parabolic subgroup $P$ and Levi factor $L$, along with a compatible choice of maximal torus $T\subseteq L$ and Borel subgroup $B\subseteq P$. Let $W=W_{G,T} = N_G(T)/T$ and $W_L = W_{L,T}$ denote the usual Weyl groups of $G$ and $L$ respectively, equipped with a Coxeter structure via the choice of Borel $B$. Throughout this discussion, given an element $w\in W = N_G(T)/T$, $\dot{w} \in N_G(T)$ denotes a lift of $w$. Let us first recall how to relate the posets $\quot PGP$ and the relative Weyl group $W_{G,L}$ in terms of the Coxeter groups $W$ and $W_L$.
	
	The usual Bruhat decomposition exhibits an equivalence of posets $W \cong \quot BGB$ via $w \mapsto B\dot{w}B$. Similarly, the parabolic Bruhat decomposition exhibits an isomorphism of posets 
	\[
	\quot PGP \cong \quot{W_L}{W}{W_L}.
	\]
	More explicitly, every double coset $PgP$ in $\quot PGP$ has a representative $P\cdot{w}P$ with $\dot{w} \in N_G(T)$, and the corresponding double coset $W_LwW_L$ in $\quot{W_L}{W}{W_L}$ is uniquely determined. Similarly, one may check directly that the map $wW_L \mapsto \dot{w}L$ defines an isomorphism of finite groups $N_W(W_L)/W_L \xrightarrow{\sim} N_G(L)/L = W_{G,L}$. Finally, we note that the natural map $N_W(W_L) \to \quot{W_L}{W}{W_L}$ is injective. Putting this all together, we see that $W_{G,L}$ is naturally identified with a subset of $\quot PGP$. 
	
	Recall that the functor $\ST = \RES^G_{P,L}\IND^G_{P,L}$ has a Mackey filtration with associated graded components $\ST^w$ indexed by $w\in \quot PGP = \quot{W_L}{W}{W_L}$. 
	\begin{proposition}\label{proposition cuspidal mackey}
		If $\fN \in \bD(\ul)$ is cuspidal, then $\ST^w(\fN) \cong 0$ unless $w\in W_{G,L}$. Moreover, if $w\in W_{G,L}$, then 
		\[
		\ST^w(\fN) \cong w_\ast(\fN).
		\]
	\end{proposition}
	\begin{proof} Choose a lift $\dot{w} \in G$ of the double coset $w\in \quot PGP$ such that $\dot{w} \in N_G(T)$. Then, by the Mackey formula, the object
		\[
		\ST^w(\fN) \simeq \IND _{L\cap \ls \dw P, L \cap \ls \dw L} ^{M} \RES^{\ls \dw L} _{P\cap \ls \dw L, L \cap \ls \dw L}
		{\dw _\ast}(\fN)
		\]
		is zero unless $\ls{\dot{w}}{L} = L$, that is $\dot{w} \in N_G(L)$. For the last statement, note that if $w\in W_{G,L}$, then $L \cap \ls \dw L = L$ and the Mackey formula collapses to just ${\dw_\ast}(\fN)$ as claimed. 
	\end{proof}
	
	With this observation in hand, the proof of Theorem \ref{maintheorem monad} is concluded. 
	
	\begin{remark}
	In fact, one can show that the Mackey filtration is compatible with the monad structure in a natural way.\footnote{By way of comparison, recall that a filtered ring is not simply a ring which is filtered as a vector space; rather, the filtration must respect the ring structure in a natural way.}
	We omit this compatibility check as it is not needed for the results of this paper.  
	\end{remark}
	 	
%

	\subsection{The Steinberg functor as an integral transform}\label{sec: steinberg as integral}
	For simplicity, let us first consider the case of the Springer block, i.e. where the cuspidal datum is given by $(T,\C)$ for a maximal torus $T$ of $G$. Fix a Borel subgroup $B$, and recall the Steinberg stack $\St{}{} = \St{B}{B}$ from Section \ref{sec:steinberg-stacks-and-functors}. There are maps
	\[
	\xymatrix{
	\ut & \ar[l]_\alpha \St{}{} \ar[r]^\beta & \ut
	}
	\]
 Let $f= \alpha \times \beta: \St{}{}\to \ut \times \ut$.
	
	Recall the notion of integral transforms for $D$-modules explained in Section \ref{sec:d-modules-on-stacks}. The following lemma is immediate from Proposition \ref{prop-inttrans}.
	\begin{lemma}\label{lemma-monad-inkernel}
	The Steinberg monad 
	\[
	\ST: \bD(\ut) \to \bD(\ut)
	\]
	is represented by the integral kernel $f_\ast(\omega_{\St{}{}}) \in \bD(\ut \times \ut)$. 
	\end{lemma}	

\begin{remark}
	The monad structure on $\ST$ translates in to an algebra structure on $f_\ast \omega_{\St{}{}}$ with respect to the convolution monoidal product in $\bD(\ut \times \ut)$. This corresponds to the structure of fiberwise convolution for the Steinberg stack $\St{}{}$. Over the fiber $(0,0) \in \ft\times \ft$, this restricts to the usual convolution on the equivariant homology of the Steinberg variety as considered in \cite{chriss_representation_1997}.
\end{remark}
 
%
 Now suppose $(L,\cE)$ is a general cuspidal datum for $G$. Let $\St{}{} = \St{P}{Q}$ denote the Steinberg stack, as defined in Section \ref{sec:steinberg-stacks-and-functors}. As in \cite{Gunningham2018}, Section 2, we let $\ul^\heartsuit \subseteq \ul$ denote the substack $\fz(\fl)\times \ucN_L$. Consider the Cartesian diagram:
 \[
 \xymatrix{
 	\Stein \ar[d] & \ar[l] \Stein^\heartsuit \ar[d]^f\\
 	\ul \times \ul & \ar[l] \ul^\heartsuit \times \ul^\heartsuit
 }
 \] 
 Let $\ddot\cE \in \bcD(\ucN_L \times \fz(\fl) \times \ucN_L \times \fz(\fl))$ denote the object $\cE^\vee \boxtimes \omega_{\fz(\fl)} \boxtimes \cE \boxtimes \omega_{\fz(\fl)}$. Unwinding the definitions as in Lemma \ref{lemma-monad-inkernel}, we find:
 
 \begin{lemma}
 	The integral kernel representing $\ST$ is given by 
 	\[
 	f_\ast f^!(\ddot\cE) \in \bD\left(\ul^\heartsuit \times \ul^\heartsuit\right)_{(\cE^\vee \boxtimes \cE)} \simeq \bD(\uz(\fl) \times \uz(\fl)).
 	\]
 \end{lemma}
 
 Note that 
 \[
 \bcD(\uz(\fl))\simeq \ufD_{\fz(\fl)} -\dgmod
 \]
 where $\ufD_{\fz(\fl)} = \fD_{\fz(\fl)} \otimes S_{\fz(\fl)^\ast}$. Thus, we may represent the monad $\ST$ as a dg-algebra $\bA= \bA_{(P,L,\cE)}$, equipped with a morphism of dg-algebras $\fD_{\fz(\fl)} \otimes S_{\fz(\fl)^\ast} \to \bA$. 
 
 With the results of this section in hand, Theorem \ref{maintheorem dg algebra} is simply a rephrasing of Theorem \ref{maintheorem monad}.

	\section{Non-splitting of the Mackey filtration}\label{sec:nonsplit}
In this subsection we will show that the Mackey filtration is non-split in general. Throughout this section, let $B$ denote a Borel subgroup of $G$ with maximal torus $T$. We identify $T \simeq B/U$ where $U$ is the unipotent radical of $B$. We have an adjunction:
\[
\xymatrixcolsep{3pc}\xymatrix{
	\RES := \RES_{B,T}^G: \bD(\ug) \ar@/^0.3pc/[r] &  \ar@/^0.3pc/[l] \bD(\ut):  \IND^G_{B,T} =: \IND.
}
\]
This adjunction induces the Steinberg monad 
\[
\ST = \RES \circ \IND: \bD(\ut) \to \bD(\ut).
\]
The Steinberg monad carries the Mackey filtration indexed by the Weyl group $W=N_G(T)/T$. The goal of this section is to prove Theorem \ref{maintheorem nonsplit}.

\begin{remark}
	It seems likely that the Mackey filtration is non-split for any cuspidal datum $(L,\cE)$ with $L\neq G$. We restrict attention to the trivial cuspidal datum for simplicity and ease of notation.  
\end{remark}

\subsection{The rank 1 case}\label{sec:sl2 computation}
For this subsection, we suppose that $G$ is either $SL_2$ or $PGL_2$. We let $B$ the standard Borel subgroup of upper triangular matrices and $T \simeq \C^\times$ the maximal torus of diagonal matrices (or their image in $PGL_2$). The Weyl group $W$ is generated by a single reflection $s$ acting on $\ft=\A^1$ by $t \mapsto -t$ and on $T$ by $a \mapsto a^{-1}$. The Mackey filtration is given by a single distinguished triangle in $\Fun^L(\bD(\ut),\bD(\ut))$:
\begin{equation}\label{equationfun}
\xymatrix{
	\ST_e \ar[r] & \ST \ar[r] & \ST_s \ar[r]^\delta &.
}
\end{equation}
Note that $\ST_e$ is equivalent to the identity functor $e_\ast$ on $\bD(\ut)$ and $\ST_s$ is equivalent to $s_\ast$. The following proposition proves Theorem \ref{maintheorem nonsplit} in the case $G=SL_2$.
\begin{proposition}\label{propositionnonsplit}
	The connection morphism $\delta$ in the distinguished triangle \ref{equationfun} is non-zero.
\end{proposition}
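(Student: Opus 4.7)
The plan is to identify the connecting morphism $\delta$ as an explicit $\Ext^1$-class between dg-bimodules, and to verify its non-vanishing by reducing to the non-splitting of a triangle in the Borel--Moore homology of $SL_2$ induced by the Bruhat decomposition.

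First, I would reinterpret the Mackey triangle in terms of integral kernels on $\ut \times \ut$. Using the stratification $\St{}{} = \St{}{}_e \sqcup \St{}{}_s$ with $\St{}{}_e \simeq \ub$ closed and $\St{}{}_s \simeq \ut$ open, and the descriptions from the preceding subsection (on $\St{}{}_e$ both $\alpha$ and $\beta$ factor through $\pi : \ub \to \ut$; on $\St{}{}_s$ one has $\alpha = \id$ and $\beta = s$), the map $f = (\alpha, \beta)$ restricts on each stratum to $\Delta \circ \pi$ and $\Gamma_s = (\id, s)$ respectively. Since $\pi_\ast \omega_{\ub} \simeq \omega_{\ut}$ (because $\pi$ is the composite of an affine bundle and a unipotent gerbe), the Mackey triangle in $\bcD(\ut \times \ut)$ takes the form
\[
\Delta_\ast \omega_{\ut} \;\to\; f_\ast \omega_{\St{}{}} \;\to\; (\Gamma_s)_\ast \omega_{\ut} \xrightarrow{\;\delta\;}.
\]

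Next, I would compute the one-dimensional space of candidate extensions. Via the Koszul-dual presentation $\bcD(\ut) \simeq S_{\ft^\ast}\text{-dgmod}$ from Example~\ref{exampleindcoh}, the integral kernels correspond to the diagonal bimodule $S_{\ft^\ast}$ and the $s$-twisted bimodule ${}^s S_{\ft^\ast}$. A short Koszul-style resolution
\[
(S_{\ft^\ast} \otimes S_{\ft^\ast})\{-2\} \xrightarrow{\;\cdot (x_1 + x_2)\;} S_{\ft^\ast} \otimes S_{\ft^\ast} \twoheadrightarrow {}^s S_{\ft^\ast},
\]
with $x_1 = x \otimes 1$, $x_2 = 1 \otimes x$ the standard generators, identifies
\[
\Ext^1_{S_{\ft^\ast}\otimes S_{\ft^\ast}}\bigl({}^s S_{\ft^\ast},\, S_{\ft^\ast}\bigr) \;\simeq\; S_{\ft^\ast}/2x\, S_{\ft^\ast} \;\simeq\; \C,
\]
so $\delta$ is determined up to scalar.

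Finally, I would show that $\delta$ is the nonzero generator. Identifying $f_\ast \omega_{\St{}{}}$ with the $G$-equivariant Borel--Moore homology $H^{BM,G}_\ast(Z')$ of the Steinberg variety $Z'$ (with the bimodule structure given by the two projections to $\ft$), the triangle above becomes the closed--open long exact sequence in equivariant BM-homology attached to $Z' = Z'_e \sqcup Z'_s$. Pushing forward along the projection $Z' \to G/B \times G/B$ and taking $G$-quotients, this descends to the BM-homology triangle for the Bruhat stratification $SL_2 = B \sqcup BsB$. Directly, using $SL_2(\C) \simeq S^3$ so that $H^{BM}_\ast(SL_2)$ is concentrated in degrees $6$ and $3$, together with the homotopy equivalences $B \simeq \C \times \C^\ast$ and $BsB \simeq \C^2 \times \C^\ast$, the long exact sequence forces the connecting map $H^{BM}_5(BsB) \simeq \Z \to H^{BM}_4(B) \simeq \Z$ to be an isomorphism (both $H^{BM}_5(SL_2)$ and $H^{BM}_4(SL_2)$ vanish). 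This non-vanishing propagates through to the bimodule extension class, ruling out $\delta = 0$.

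The main obstacle is the last step: making rigorous the translation between the algebraic $\Ext^1$-class and the topological connecting map of the Bruhat triangle, and in particular matching the bimodule structure from the two projections $Z' \to \ft$ to the $B\times B$-bi-equivariant structure on the Bruhat cells of $SL_2$, while correctly tracking all cohomological degree shifts.
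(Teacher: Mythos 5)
Your final topological input (the non-splitness of the Borel--Moore long exact sequence of $SL_2 = B\sqcup BsB$, driven by $SL_2(\C)\simeq S^3$) is exactly the paper's, but the route you take to get there has two genuine problems, one of which you already flag as ``the main obstacle.''

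First, the Koszul-dual bookkeeping drops a factor. You replace $\bcD(\ut)$ by $S_{\ft^\ast}$-dgmod, but $\ut=\ft/T$ is not $pt/T$: by Example~\ref{exampleindcoh} and Proposition~\ref{propositionodcusp}, $\bcD(\ut)\simeq\ufD_\ft$-dgmod where $\ufD_\ft=\fD_\ft\otimes S_{\ft^\ast}$, and the integral kernels are $\ufD_\ft$-bimodules (equivalently, objects of $\bcD(\ut\times\ut)$), not $S_{\ft^\ast}$-bimodules. Your Koszul resolution and the resulting $\Ext^1\simeq\C$ are computed in the wrong category, so ``one-dimensional, hence determined up to scalar'' is unjustified. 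Fortunately that step is also unnecessary: to prove $\delta\neq 0$ you only need to exhibit one functor sending $\delta$ to something non-zero, regardless of how large the ambient $\Ext^1$ is.

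Second --- and this is the real gap --- the passage from the kernel triangle in $\bD(\ut\times\ut)$ to the Bruhat triangle for $SL_2$ is not given by an actual functor. ``Pushing forward along $Z'\to G/B\times G/B$ and taking $G$-quotients'' is not an operation one can apply to $f_\ast\omega_{\St{}{}}\in\bD(\ut\times\ut)$; in particular it does not yield the BM homology of $SL_2$ as a computation of a functor applied to the triangle~\ref{equationk}. The paper resolves exactly this by applying the concrete continuous functor $p_\ast\nabla^!$ (pullback along the \emph{antidiagonal} $\nabla$, then pushforward to the point), which by base change replaces $\Stein$ by the fiber product $X=\nabla^{-1}(\Stein)$. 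Lemma~\ref{lemmanonsplit} then identifies $X\cong G\times\A^1$ explicitly, together with its open/closed partition $X_s\sqcup X_e$, after which the BM homology calculation you carried out applies verbatim (up to the shifts coming from the extra $\A^1$-factors). The choice of the antidiagonal here is the key idea: $\fF_s$ is supported on $\nabla$, so $\nabla^!$ sees it in full, while the supports of $\fF_e$ and $\fF_s$ meet only at the origin, which is exactly what produces a non-trivial connecting map. If you replace your informal ``descends to'' step by ``apply $p_\ast\nabla^!$ and use base change plus the isomorphism $X\cong G\times\A^1$,'' your argument becomes the paper's.
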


In order to prove Proposition \ref{propositionnonsplit}, recall that the category $\Fun^L(\bD(\ut),\bD(\ut))$ is equivalent to $\bD(\ut \times \ut)$ (see Proposition \ref{prop-inttrans}), and thus we may replace the functors $\ST$, $\ST_e$ and $\ST_s$ by their integral kernels to obtain a distinguished triangle in $\bD(\ut \times \ut)$:
\begin{equation}\label{equationk}
\xymatrix{
	\fK_e \ar[r] & \fK \ar[r] & \fK_s \ar[r]^\delta & 
}
\end{equation}
By Example \ref{example-int-kernel}, $\fK = f_\ast(\omega_{\St{}{}})$. The stratification of the Steinberg stack $\St{}{} = \Stw{}{}e \cup \Stw{}{}s$ gives rise to the distinguished triangle \ref{equationk}. To make the computation more explicit, consider the following variant of the Steinberg variety:
\[
\Stein = \{ (x,g) \in \fg \times G \mid x \in \fb, \ls gx \in \fb \}
\]
The Steinberg stack $\St{}{} = \St{B}{B}$ is the stack quotient $\Stein/(B\times B)$.  The projection maps are given by:
\[
\xymatrix{
	\ft & \ar[l]_a \Stein \ar[r]^b & \ft \\
	x + \fu & \ar@{|->}[l] (x,g) \ar@{|->}[r] & \ls gx + \fu.
}
\]
The variety $\Stein$ is the union of a closed stratum $\Stein^e$, given by the locus where $g\in B$, and an open stratum $\Stein^s$ where $g\notin B$. We write $f = (a\times b): \Stein \to \ft \times \ft$ and $f_e$, $f_s$ for the restrictions to $\Stein^e$, $\Stein^s$ respectively. We have a distinguished triangle
\begin{equation}\label{equationf}
\fF_e := f_{e\ast} \omega_{\Stein^e}  \to\fF :=  f_\ast \omega_\Stein \to \fF_s := f_{s\ast} \omega_{\Stein^s} \xrightarrow{\delta}.
\end{equation}
(Taking the fiber over a point $(t_1,t_2) \in \ft \times \ft$, the distinguished triangle \ref{equationf} computes the long exact sequence in Borel-Moore homology of the fiber $\Stein_{(t_1,t_2)}$ associated to the partition $\Stein_{(t_1,t_2)} =  \Stein_{(t_1,t_2)}^e \cup \Stein_{(t_1,t_2)}^s$.)

Up to a cohomological shift,\footnote{Strictly speaking, the variety $\Stein$ is not obtained from the stack $\St{}{}$ by base change from $\ut \times \ut$ to $\ft \times \ft$, but it differs from the base change only by unipotent gerbes whose only effect on the category of $D$-modules is a cohomological shift.} the distinguished triangle \ref{equationf} is obtained from \ref{equationk} by forgetting the equivariant structure. Thus we are reduced to proving that the connecting morphism $\delta$ in the distinguished triangle \ref{equationf} is non-zero.

Let $\Delta: \ft \hookrightarrow \ft \times \ft$ denote the diagonal, and $\nabla: \ft \hookrightarrow \ft \times \ft$ the antidiagonal (i.e. the graph of $s: \ft \to \ft$). Note that $\fF_e$ is supported $\Delta$ whereas $\fF_s$ is supported in $\nabla$, and that the images of $\Delta$ and $\nabla$ intersect at $(0,0)$. 

Let $p: \ft \to pt$ and consider the distinguished triangle of complexes of vector spaces:
\[
p_\ast \nabla^! \fF_e \to p_\ast \nabla^! \fF \to p_\ast \nabla^! \fF_s \to
\]
This triangle gives rise to the long exact sequence in Borel-Moore homology associated to $X= \nabla^{-1}(\Stein)$ with its partition into a closed subset $ X_e = \nabla^{-1}(\Stein_e)$ and open complement $X_s = \nabla^{-1}(\Stein_s)$. Explicitly, we have:
\[
X = \{ (x,g) \in \fg \times G \mid x\in \fb, \ls gx  \in \fb, x + \fu = -\ls gx + \fu \}.
\]
Note that the fiber $h^{-1}(g)$ is given by $\{g\} \times \fu$ if $G\in B$, or $\{g\}\times \fb\cap \ls{g^{-1}}\fb$ if $g\notin B$. In particular, all the fibers are isomorphic to $\A^1$. In fact, we have:
\begin{lemma}
	The map $X\to G$ is a line bundle.
\end{lemma} 
\begin{proof}
	We first assume that $G=SL_2$. Observe that the morphism
	\begin{align*}
	\A^1 \times G & \to \fg \times G \\
	\left(
	t,
	\begin{bmatrix}
	a             & b                \\
	c             & d                
	\end{bmatrix}
	\right)
	& \mapsto          
	\left(
	t\cdot \begin{bmatrix}
	c            & 2d              \\
	0             & -c              
	\end{bmatrix},
	\begin{bmatrix}
	a             & b                \\
	c             & d                
	\end{bmatrix}
	\right) 
	\end{align*}
	defines an isomorphism of $\A^1\times G$ onto $X\subseteq \fg\times G$. In other words, $X\to SL_2$ is the trivial line bundle. For the $PGL_2$ case, note that the above map is equivariant for the action of the center $Z(SL_2)$, where $Z(SL_2)$ acts on $\A^1$ by the unique non-trivial character. Thus we get the line bundle $\A^1 \times^{Z(SL_2)} SL_2$ over $PGL_2$ as required. 
\end{proof}

\begin{proof}[Proof of Proposition \ref{propositionnonsplit}]
Let us first consider the case $G=SL_2$. We are reduced to showing that the connecting morphism in the long exact sequence
\[
H_\ast^{BM}(X_e) \to H_\ast^{BM}(X) \to H_\ast(X_s) \to
\]
is non-zero. By the above calculations, we see that $X$ is homeomorphic to $S^3 \times \R^5$, $X_e$ is homeomorphic to $S^1 \times \R^5$, and $X_s$ is homeomorphic to $S^1 \times \R^7$. Thus the long exact sequence in Borel-Moore homology takes the following form:
\[
\xymatrix{
	0 \ar[r] & \C[8] \ar[r] & \C[8] \ar[lld] \\
	0 \ar[r] & 0 \ar[r] & \C[7] \ar[lld] \\
	\C[6]\ar[r] & 0 \ar[r] & 0 \ar[lld]\\
	\C[5] \ar[r] & \C[5] \ar[r] & 0
}
\]
We deduce that the connecting morphism must be non-zero as required. This concludes the argument for $G=SL_2$. Finally, the Borel-Moore homology groups are unchanged in the case $G=PGL_2$, so the same argument applies. 
\end{proof}

\subsection{Proof of Theorem \ref{maintheorem nonsplit}}
The goal of this subsection is to prove the following result, which implies Theorem \ref{maintheorem nonsplit}.

\begin{proposition}\label{proposition nonsplit general}
	The unit morphism $u:\id_{\bD(\ut)} \to \ST$ for the Steinberg monad does not split. That is, there does not exist a natural transformation $s:\ST \to \id_{\bD(\ut)}$ such that $su$ is the identity natural transformation. 
\end{proposition}
\begin{proof}
We first note that Proposition \ref{proposition nonsplit general} holds in the case $G=SL_2$ or $G=PGL_2$ by Proposition \ref{propositionnonsplit}. It follows immediately that the result holds for $G$ of the form $G'\times Z$ where $G'$ is semisimple of rank 1 and $Z$ is a torus.  

More generally, we claim that the result holds for any reductive group $G$ of semisimple rank $1$. Indeed, any such $G$ is of the form $\widetilde{G}/\Gamma$ where $\widetilde{G} = [G,G]\times Z^\circ(G)$ is a product of the semisimple group $G'=[G,G]$ with the torus $Z^\circ(G)$, and $\Gamma = Z([G,G])\cap Z^\circ(G)$ is a finite central subgroup. In that case, map $\widetilde{\ug} =\fg/\widetilde{G} \to \ug = \fg/G$ is a $\Gamma$-gerbe and thus the pullback functor $\bD(\ug) \to \bD(\widetilde{\ug})$ is an equivalence of categories. Similarly for the corresponding maps $\bD(\ub) \to \bD(\fb/\widetilde{B})$ and $\bD(\ut) \to \bD(\ft/\widetilde{T})$ where $\widetilde{B}$ and $\widetilde{T}$ are the corresponding Borel subgroup and maximal torus in $\widetilde{G}$. Thus, the Steinberg monad for $\widetilde{G}$ is naturally identified with the Steinberg monad for $\widetilde{G}$, which we have already seen does not split. 

Finally, suppose that we are in the setting of Theorem \ref{maintheorem nonsplit}. We may choose a parabolic subgroup $P\supseteq B$ of $G$ with Levi factor $M\supseteq T$ of semisimple rank 1. Then $B_M = B\cap M$ is a Borel subgroup in $M$, and we have maps
\[
\xymatrix{\id_{\bD(\ut)} \ar[r]_-{u_M} \ar@/^1pc/[rr]^-u&\ST^M := \RES^M_{B_M,T} \IND^M_{B_M,T} \ar[r]_-{v} & \ST = \RES^G_{B,T} \IND^G_{B,T}}
\] 
Here, the map $v$ is induced by unit morphism $\id_{\bD(\um)} \to \ST^G_M = \RES^G_{P,M} \IND^G_{P,M}$ in light of the equivalence $\RES^G_{B,T} \simeq \RES^G_{P,M} \RES^M_{B_M,T}$ (see \cite[Proposition 1.7]{Gunningham2018}). A splitting of the map $u$ induces a splitting of the map $u_M$ which we have seen does not exist. Thus the map $u$ does not split, as required. 
\end{proof}

\subsection{Proof of Theorem \ref{mainthm opposite}}
We keep the notation introduced at the beginning of this section. In addition, let $\overline{B}$ be the opposite Borel subgroup containing $T$ and let $\overline{\RES} = \RES^G_{\overline{B},T}$. The goal of this subsection is to prove Theorem \ref{mainthm opposite}, which we restate below.

\begin{theorem}
	There is no natural isomorphism of functors between $\RES^G_{B,T}$ and $\RES^G_{\overline{B},T}$. 
\end{theorem}

We will prove this result by contradiction. To that end, we henceforth suppose that there is a natural isomorphism of functors:
\begin{equation}\label{isomorphism of functors}
\RES^G_{B,T} \cong \RES^G_{\overline{B},T}
\end{equation}

We have the following second adjunction theorem due to Drinfeld--Gaitsgory (following Braden). 
\begin{theorem}[{\cite[Theorem 3.4.3]{drinfeld_theorem_2014}, \cite{braden_hyperbolic_2003}}, {\cite[Theorem 3.7]{Gunningham2018}}] \label{theorem second adjunction}
	The functor $\RES^G_{\overline{B},T}$ is left adjoint to $\IND^G_{B,T}$.
\end{theorem}

Under our assumption (\ref{isomorphism of functors}) it follows that $\RES$ is both right and left adjoint to $\IND$. Let 
\[
u_1: \id_{\bD(\ut)} \to \ST = \RES \IND
\]
be the unit of the first adjunction, and 
\[
c_2: \ST \to \id_{\bD(\ut)}
\]
the counit of the second adjunction. The Mackey filtration  the inclusion of $\{e\} \hookrightarrow W$
Note that the exact triangle formed by the unit $u_1$ coincides with the part of the Mackey filtration induced by the inclusion of the downward-closed subposet $\{e\} \hookrightarrow W$:
\[
\ST^{e} = \id_{\bD(\ut)} \xrightarrow{u_1} \ST \to \ST^{>e}.
\]
Proposition \ref{proposition nonsplit general} thus implies the following lemma.
\begin{lemma}\label{lemma nonsplit}
The unit $u_1$ is not split; that is, there does not exist a natural transformation $s:\ST \to \id_{\bD(\ut)}$ such that $s u_1 = \id_{\id_{\bD(\ut)}}$.
\end{lemma}

\begin{lemma}\label{lemma cu=0}
	We have $c_2u_1=0$.
\end{lemma}
\begin{proof}
	By the theory of integral transforms, and noting that the identity functor corresponds to $\Delta_\ast(\omega_{\ut})$ where $\Delta:\ut \to \ut \times \ut$ is the diagonal morphism, we have that:
	\[
	\End(\id_{\bD(\ut)}) \cong \End_{\bD(\ut \times \ut)}(\Delta_\ast \omega_{\ut}) \cong H_{dR}^0(\ut) \cong \C.
	\]
	In particular, the composition $c_2u_1 = \lambda \id_{\id_{\bD(\ut)}}$ for some constant $\lambda \in \C$.
If $\lambda \neq 0$, we can rescale $c_2$ to split $u_1$, contradicting Lemma \ref{lemma nonsplit}. 
\end{proof} 

We will now derive a contradiction to Lemma \ref{lemma cu=0}. Let us recall some notions from \cite[Section 3.1]{Gunningham2018} regarding parabolic restriction over the regular locus. Let $j: \ut^{\reg} \hookrightarrow \ut$ denote the inclusion of the regular locus and $k:\ug^{rs} \hookrightarrow \ug$, the inclusion of the regular semisimple locus. Let $\ub^\reg = \ub \times_{\ut} \ut^\reg$.

Let $d: \ut \to \ug$ denote the natural map induced by the inclusion $T\hookrightarrow G$, and $d^\reg: \ut^{\reg} \to \ug^{\rs}$ the restriction to the regular locus. Then, as explained in \cite[Section 3.1]{Gunningham2018}, we have a commutative diagram of stacks,
\[
\xymatrixcolsep{3pc}
\xymatrix{
		\ug &\ar[l]_-r  \ub  \ar[r]^-s & \ut\\
	\ug^\reg  \ar[u]^k &\ar[l]_-{r^\reg}  \ub^\reg  \ar[u] \ar[r]^-{s^\reg}_-\sim& \ar@/^1pc/[ll]^-{d^\reg} \ut^{\rs} \ar[u]_j,
	}
\]
where the vertical maps are open embeddings, $s^\reg$ is an isomorphism, both squares are cartesian, and $d^\reg$ is a $W$-Galois covering. Note that $j^!j_\ast \cong \id_{\bD(\ut^\reg)}$ (respectively, $k^!k_\ast \cong \id_{\bD(\ug^\rs)}$) as $j$ (respectively $k$) is an open embedding.
Over the regular locus we have the functors
\begin{align*}
	\RES^\reg &:= j^! \RES k_\ast \cong (s^\reg)_\ast(r^\reg)^! \cong (d^{\reg})^!\\
		\IND^{\reg} &:= k^! \IND j_\ast \cong (r^\reg)_\ast (s^\reg)^! \cong (d^\reg)_\ast.  
\end{align*}

By base-change, we have $\RES k_\ast \cong j_\ast \RES^\reg$ and $\IND j_\ast \cong k_\ast \IND^\reg$. It follows that the bi-adjoint functors $\IND$ and $\RES$ restrict to bi-adjoint functors between the full subcategories $\bD(\ug^\rs)$ and $\bD(\ut^\reg)$. Moreover, the natural transformations $c_2^\reg := j^! c_2 j_\ast$ and $u_1^\reg := j^! u_1 j_\ast$ are counit and unit morphisms for these adjunctions (here, the juxtaposition of a functor with a natural transformation denotes whiskering). 

On the other hand, as $d^\reg$ is a principal bundle for a finite group (so in particular, smooth and proper), $\RES^\reg = (d^\reg)^!$ is both right and left adjoint to $\IND^\reg = (d^\reg)_\ast$, and (crucially), the counit $c'_2:(d^\reg)^!(d^\reg)_\ast \to \id_{\bD(\ut^\reg)}$ of the second adjunction splits the unit $u'_1:  \id_{\bD(\ut^\reg)} \to (d^\reg)^!(d^\reg)_\ast$ of the first adjunction. Indeed, for $\fM \in \bD(\ut^\reg)$,
\[
(d^\reg)^!(d^\reg)_\ast(\fM) = \bigoplus_{w\in W} w_\ast(\fM),
\]
and $u_1'$ (respectively $c_2'$) is given by inclusion of (respectively, projection onto) the summand corresponding to the identity $e\in W$. 

Thus, we are in the following situation: 
\begin{itemize}
	\item we have a pair of functors $\RES^\reg$, $\IND^\reg$; 
	\item $c_2^\reg,c_2'$ are two counit morphisms exhibiting $\RES^\reg$ as left adjoint to $\IND^\reg$; and 
	\item $u_1,u_1'$ are two unit morphisms exhibiting $\RES^\reg$ as right adjoint to $\IND^\reg$. 
\end{itemize}

Finally, we recall the following form of uniqueness for adjunctions.
\begin{lemma}
Let $F:\cC \to \cD$ and $G:\cD \to \cC$ be functors. If $c, c':FG \to \id_{\cD}$ are two counit morphisms witnessing $F$ as left adjoint to $G$, then there exists $\alpha\in \Aut(G)$ such that $c'$ is given by the composite
\[
GF \xrightarrow{\alpha \id_{F}} GF \xrightarrow{c} \id_{\cD}.
\]
Similarly, if $u,u':\id_\cD \to FG$ are two unit morphisms witnessing $F$ as right adjoint to $G$, then there exists $\beta \in \Aut(G)$ such that $u'$ is given by the composite
 \[
 GF \xrightarrow{\beta \id_{F}} GF \xrightarrow{u} \id_{\cD}.
 \]
\end{lemma}

In our case, we have that 
\[
\Aut(\RES^\reg) \cong \Aut_{\bD(\ug^{\rs} \times \ut^\reg)}((\ub^\reg \to \ug_\rs \times \ut^\reg)_\ast (\omega_{\ub^\reg})) \cong H^0(\ub^\reg)^\times \cong \C^\times.
\]
It follows that any two counits (respectively, units) for an adjunction between $\RES^\reg$ and $\IND^\reg$ differ by a non-zero scalar. As $c_2'u_1' = \id_{\bD(\ut^\reg)}$, it follows that $c_2^\reg u_1^\reg \neq 0$. This contradicts Lemma \ref{lemma cu=0}. It follows that there cannot exist a natural isomorphism of functors as in (\ref{isomorphism of functors}). This concludes the proof of Theorem \ref{mainthm opposite}.

\bibliography{papers_jabref}

\providecommand{\bysame}{\leavevmode\hbox to3em{\hrulefill}\thinspace}
\providecommand{\MR}{\relax\ifhmode\unskip\space\fi MR }
\providecommand{\MRhref}[2]{%
  \href{http://www.ams.org/mathscinet-getitem?mr=#1}{#2}
}
\providecommand{\href}[2]{#2}
\begin{thebibliography}{AMGR19}

\bibitem[Ach11]{achar_green_2011}
Pramod~N. Achar, \emph{Green functions via hyperbolic localization}, Documenta
  Mathematica \textbf{16} (2011), 869--884.

\bibitem[AG15]{arinkin_singular_2015}
Dima Arinkin and Dennis Gaitsgory, \emph{Singular support of coherent sheaves
  and the geometric langlands conjecture}, Selecta Mathematica \textbf{21}
  (2015), no.~1, 1--199.

\bibitem[AHJR17]{achar_modular_2017}
Pramod Achar, Anthony Henderson, Daniel Juteau, and Simon Riche, \emph{Modular
  generalized {S}pringer correspondence {II}: classical groups}, J. Eur. Math.
  Soc. (JEMS) \textbf{19} (2017), no.~4, 1013--1070.

\bibitem[AMGR19]{Ayala2019}
David Ayala, Aaron Mazel-Gee, and Nick Rozenblyum, \emph{Stratified
  noncommutative geometry}, 2019.

\bibitem[BD]{beilinson_quantization_}
Alexander Beilinson and Vladimir Drinfeld, \emph{Quantization of {Hitchin}'s
  integrable system and {Hecke} eigensheaves (preliminary version)}.

\bibitem[BG15]{bellamy_hamiltonian_2015}
Gwyn Bellamy and Victor Ginzburg, \emph{Hamiltonian reduction and nearby cycles
  for mirabolic $\mathcal{D}$-modules}, Advances in Mathematics \textbf{269}
  (2015), 71--161.

\bibitem[BGT13]{blumberg_universal_2013}
Andrew~J. Blumberg, David Gepner, and Gon\c{c}alo Tabuada, \emph{A universal
  characterization of higher algebraic {$K$}-theory}, Geom. Topol. \textbf{17}
  (2013), no.~2, 733--838.

\bibitem[BL94]{bernstein_equivariant_1994}
Joseph Bernstein and Valery Lunts, \emph{Equivariant sheaves and functors},
  Lecture {Notes} in {Mathematics}, vol. 1578, Springer-Verlag, 1994.

\bibitem[BM81]{borho_representations_1981}
Walter Borho and Robert MacPherson, \emph{Repr\'esentations des groupes de
  {W}eyl et homologie d'intersection pour les vari\'et\'es nilpotentes}, C. R.
  Acad. Sci. Paris S\'er. I Math. \textbf{292} (1981), no.~15, 707--710.

\bibitem[Bra03]{braden_hyperbolic_2003}
Tom Braden, \emph{Hyperbolic localization of intersection cohomology},
  Transformation Groups \textbf{8} (2003), no.~3, 209--216.

\bibitem[BT65]{borel_groupes_1965}
Armand Borel and Jacques Tits, \emph{Groupes r\'eductifs}, Inst. Hautes
  \'Etudes Sci. Publ. Math. (1965), no.~27, 55--150.

\bibitem[BW85]{barr_toposes_1985}
Michael Barr and Charles Wells, \emph{Toposes, triples and theories},
  Grundlehren der {Mathematischen} {Wissenschaften} [{Fundamental} {Principles}
  of {Mathematical} {Sciences}], vol. 278, Springer-Verlag, 1985.

\bibitem[BYD21]{Bezrukavnikov2021}
Roman Bezrukavnikov and Alexander Yom~Din, \emph{On parabolic restriction of
  perverse sheaves}, Publications of the Research Institute for Mathematical
  Sciences \textbf{57} (2021), no.~3, 1089--1107.

\bibitem[BZFN10]{ben-zvi_integral_2010}
David Ben-Zvi, John Francis, and David Nadler, \emph{Integral transforms and
  {Drinfeld} centers in derived algebraic geometry}, Journal of the American
  Mathematical Society \textbf{23} (2010), no.~4, 909--966.

\bibitem[BZN09]{ben-zvi_character_2009}
David Ben-Zvi and David Nadler, \emph{The {Character} {Theory} of a {Complex}
  {Group}}, (preprint) (2009), arXiv: 0904.1247.

\bibitem[CG97]{chriss_representation_1997}
Neil Chriss and Victor Ginzburg, \emph{Representation theory and complex
  geometry}, Birkhäuser Boston Inc., 1997.

\bibitem[Che21]{Chen2021}
Tsao-Hsien Chen, \emph{A vanishing conjecture: the {$GL_n$} case}, Selecta
  Mathematica \textbf{28} (2021), no.~1.

\bibitem[Coh13]{cohn_differential_2013}
Lee Cohn, \emph{Differential {Graded} {Categories} are k-linear {Stable}
  {Infinity} {Categories}}, arXiv:1308.2587 [math] (2013).

\bibitem[DG13]{drinfeld_some_2013}
Vladimir Drinfeld and Dennis Gaitsgory, \emph{On {Some} {Finiteness}
  {Questions} for {Algebraic} {Stacks}}, Geometric and Functional Analysis
  \textbf{23} (2013), no.~1, 149--294.

\bibitem[DG14]{drinfeld_theorem_2014}
V.~Drinfeld and D.~Gaitsgory, \emph{On a theorem of {B}raden}, Transformation
  Groups \textbf{19} (2014), no.~2, 313--358.

\bibitem[Gai11]{gaitsgory_ind_2011}
Dennis Gaitsgory, \emph{Ind-coherent sheaves}, arXiv preprint arXiv:1105.4857
  (2011).

\bibitem[Gai12]{gaitsgory_generalities_2012}
\bysame, \emph{Generalities on {DG}-categories}, 2012.

\bibitem[Gai13]{gaitsgory_functors_2013}
\bysame, \emph{Functors given by kernels, adjunctions and duality},
  arXiv:1303.2763 (2013).

\bibitem[GR17a]{gaitsgory_study_2017}
Dennis Gaitsgory and Nick Rozenblyum, \emph{A study in derived algebraic
  geometry. {V}ol. {I}. {C}orrespondences and duality}, Mathematical Surveys
  and Monographs, vol. 221, American Mathematical Society, Providence, RI,
  2017.

\bibitem[GR17b]{gaitsgory_study_2017a}
\bysame, \emph{A study in derived algebraic geometry. {V}ol. {II}.
  {D}eformations, {L}ie theory and formal geometry}, Mathematical Surveys and
  Monographs, vol. 221, American Mathematical Society, Providence, RI, 2017.

\bibitem[Gun18]{Gunningham2018}
Sam Gunningham, \emph{Generalized springer theory for d-modules on a reductive
  lie algebra}, Selecta Mathematica \textbf{24} (2018), no.~5, 4223--4277,
  Correction (2021): Selecta Mathematica , Vol. 27, No. 4. Updated eprint
  arXiv:1510.02452.

\bibitem[HTT08]{hotta_d_2008}
Ryoshi Hotta, Kiyoshi Takeuchi, and Toshiyuki Tanisaki, \emph{D-modules,
  perverse sheaves, and representation theory}, Progress in {Mathematics}, vol.
  236, Birkhäuser Boston Inc., 2008, Translated from the 1995 Japanese edition
  by Takeuchi.

\bibitem[Li18]{Li2018}
Penghui Li, \emph{Derived categories of character sheaves}.

\bibitem[Li23]{Li2023}
\bysame, \emph{Derived categories of character sheaves {II}: canonical
  {induction/restriction} functors}.

\bibitem[LL23]{Laumon2023}
Gérard Laumon and Emmanuel Letellier, \emph{On the derived lusztig
  correspondence}, Forum of Mathematics, Sigma \textbf{11} (2023).

\bibitem[Lur06]{lurie_stable_2006}
Jacob Lurie, \emph{Stable {Infinity} {Categories}}, arXiv:math/0608228 (2006).

\bibitem[Lur11]{lurie_higher_2011}
\bysame, \emph{Higher {Topos} {Theory}}, 2011.

\bibitem[Lur12]{Lurie2012}
\bysame, \emph{Higher algebra, august 3rd 2012 version}, Available at
  \url{https://www.math.ias.edu/~lurie/papers/HA2012.pdf}, 2012.

\bibitem[Lur17]{Lurie2017}
\bysame, \emph{Higher algebra, september 18th 2017 version}, Available at:
  \url{https://www.math.ias.edu/~lurie/papers/HA.pdf}, 2017.

\bibitem[Lus84]{lusztig_intersection_1984}
George Lusztig, \emph{Intersection cohomology complexes on a reductive group},
  Inventiones Mathematicae \textbf{75} (1984), no.~2, 205--272.

\bibitem[Lus85]{lusztig_character_1985b}
George Lusztig, \emph{Character sheaves iii}, Advances in Mathematics
  \textbf{57} (1985), no.~3, 266 -- 315.

\bibitem[Lus86]{lusztig_character_1986}
George Lusztig, \emph{Character sheaves {V}}, Advances in Mathematics
  \textbf{61} (1986), 103--155.

\bibitem[Lus88]{lusztig_cuspidal_1988}
\bysame, \emph{Cuspidal local systems and graded {Hecke} algebras. {I}},
  Institut des Hautes Études Scientifiques. Publications Mathématiques
  (1988), no.~67, 145--202.

\bibitem[Lus95]{lusztig_cuspidal_1995}
\bysame, \emph{Cuspidal local systems and graded {Hecke} algebras. {II}},
  Representations of groups ({Banff}, {AB}, 1994), {CMS} {Conf}. {Proc}.,
  vol.~16, Amer. Math. Soc., 1995, With errata for Part I [Inst. Hautes Études
  Sci. Publ. Math. No. 67 (1988), 145–202; MR0972345 (90e:22029)],
  pp.~217--275.

\bibitem[MN14]{mcgerty_morse_2014}
Kevin McGerty and Thomas Nevins, \emph{Morse decomposition for {D}-module
  categories on stacks}, (preprint) (2014), 1--25.

\bibitem[Rid13]{rider_formality_2013}
Laura Rider, \emph{Formality for the nilpotent cone and a derived {Springer}
  correspondence}, Advances in Mathematics \textbf{235} (2013), 208--236.

\bibitem[RR16]{rider_perverse_2016}
Laura Rider and Amber Russell, \emph{Perverse sheaves on the nilpotent cone and
  {L}usztig's generalized {S}pringer correspondence}, Lie algebras, {L}ie
  superalgebras, vertex algebras and related topics, Proc. Sympos. Pure Math.,
  vol.~92, Amer. Math. Soc., Providence, RI, 2016, pp.~273--292.

\bibitem[Ste76]{steinberg_desingularization_1976}
Robert Steinberg, \emph{On the desingularization of the unipotent variety.},
  Inventiones mathematicae \textbf{36} (1976), 209--224.

\end{thebibliography}
\bibliographystyle{amsalpha}

\end{document}